\renewcommand{\dot}[1]{{{#1}^\sigma}}
\newtheorem{proposition}{Proposition}
\newtheorem{example}[proposition]{Example}
\newtheorem{lemma}[proposition]{Lemma}
\newtheorem{corollary}[proposition]{Corollary}
\newtheorem{theorem}[proposition]{Theorem}
\newcommand{\bPhi}{{\bf \Phi}}
\newcommand{\bu}{{\bf u}}
\def\qed{\hfill$\square$}
\begin{document}

\title{Asymmetric linkages: maxmin vs.\ reflected maxmin copulas}

\author[D. Kokol Bukov\v sek]{Damjana Kokol Bukov\v{s}ek}
\address{Damjana Kokol Bukov\v{s}ek, School of Economics and Business, University of Ljubljana, and Institute of Mathematics, Physics and Mechanics, Ljubljana, Slovenia}
\email{damjana.kokol.bukovsek@ef.uni-lj.si}

\author[T. Ko\v sir]{Toma\v{z} Ko\v{s}ir}
\address{Toma\v{z} Ko\v{s}ir, Faculty of Mathematics and Physics, University of Ljubljana, and Institute of Mathematics, Physics and Mechanics, Ljubljana, Slovenia}
\email{tomaz.kosir@fmf.uni-lj.si}

\author[B. Moj\v skerc]{Bla\v{z} Moj\v{s}kerc}
\address{Bla\v{z} Moj\v{s}kerc, School of Economics and Business, University of Ljubljana, and Institute of Mathematics, Physics and Mechanics, Ljubljana, Slovenia}
\email{blaz.mojskerc@ef.uni-lj.si}

\author[M. Omladi\v c]{Matja\v{z} Omladi\v{c}}
\address{Matja\v{z} Omladi\v{c}, Institute of Mathematics, Physics and Mechanics, Ljubljana, Slovenia}
\email{matjaz@omladic.net}
\begin{abstract}
    In this paper we introduce some new copulas emerging from shock models. It was shown in \cite{KoOm} that reflected maxmin copulas (RMM for short) are not just some specific singular copulas; they contain many important absolutely continuous copulas including the negative quadrant dependent ``half'' of the Eyraud-Farlie-Gumbel-Morgenstern class. The main goal of this paper is to develop the RMM copulas with dependent endogenous shocks and give evidence that RMM copulas may exhibit some characteristics better than the original maxmin copulas (MM for short): (1) An important evidence for that is the iteration procedure of the RMM transformation which we prove to be always convergent and we give many properties of it that are useful in applications. (2) Using this result we find also the limit of the iteration procedure of the MM transformation thus answering a question proposed in \cite{DuOmOrRu}. (3) We give the multivariate dependent RMM copula that compares to the MM version given in \cite{DuOmOrRu}. In all our copulas the idiosyncratic and systemic shocks are combined via asymmetric linking functions as opposed to Marshall copulas where symmetric linking functions are used.
\end{abstract}

\thanks{DKB\&TK\&BM\&MO acknowledge financial support from the Slovenian Research Agency (research core funding No. P1-0222).}
\keywords{Copula; dependence concepts; maxmin copulas; transformations of distribution functions; PQD property; survival analysis; shock models}
\subjclass[2010]{Primary: 	60E05; Secondary: 60E15, 62N05}

\maketitle

\section{ Introduction }

Dependence concepts play a crucial role in multivariate statistical literature since it was recognized that the independence assumption cannot describe conveniently the behavior of a stochastic system. One of the main tools have eventually become copulas due to their theoretical omnipotence emerging from the Sklar's theorem \cite{Skla}. 
There has been a vast literature on the subject 
including \cite{GeNe,KlMeSpSt,JwBaDeMe14,JwBaDeMe15,KlLiMePa}; for an excellent overview of these methods and the properties of copulas see \cite{Nels} and a more recent monograph \cite{DuSe}. For an overview of probability background, see for instance \cite{FrGr}.

In this paper we devote our study to the copulas emerging from shock models which are playing an important role in applications (cf.\  \cite{Hu,Mu,LiMcNe} and many more). This theory starts with Marshall and Olkin \cite{MaOl}, although it is Marshall \cite{Mars} who really introduces copulas into the picture of shock models. We believe that the third milestone on this path was set by Durante, Girard, and Mazo \cite{DuGiMa}, cf.\ also more generally \cite{ChDuMu}. The paper  \cite{DuGiMa} may have encouraged a vivid interest in the area \cite{ChMu,Mu,OmRu,DuOmOrRu,Hu, KoOm}. We want to point out the paper \cite{OmRu} where an asymmetric version of Marshall copulas was introduced, called \emph{maxmin copulas}. The dependent version of these copulas together with the multivariate version were introduced in \cite{DuOmOrRu}, while \cite{KoOm} brings into the picture the notion of \emph{reflected maxmin copulas, RMM} for short; it is also shown there that these copulas are always bounded above by the product copula $\Pi$ which means in particular that they are negatively quadrant dependent.

These views bring RMM copulas into the spotlight of some previously studied approaches and widen the range of their applications. The copulas of this type appeared in \cite[Proposition 3.2]{DuJa}. Moreover, these copulas may be viewed as perturbations of the product copula $\Pi$. General perturbations of copulas were studied in \cite{DuFeSaUbFl} and \cite{MeKoKo}, where a subclass of what we call reflected maxmin copulas were considered (cf.\ \cite[\S 3]{MeKoKo}). The RMM copulas can also be related to the results presented in \cite[Theorem 7.1]{DuMePaSe} with the maximum replaced by the minimum; we believe that this similarity is more than just coincidental. The copulas introduced in \cite{RoLaUbFl} are not only very close to our RMM copulas, as it was said in \cite{KoOm}, it turns out in our Example \ref{EFGM}\textbf{(a)} that they basically belong to MM copulas. Let us also point out that the maxmin (and consequently reflected maxmin) copulas have some properties that are appealing in various contexts related to the fuzzy set theory and the multicriteria decision making. The class includes nonsymmetric copulas that are used, for instance, as more general fuzzy connectives \cite{BeBoCiSaPlSa,DuKoMeSe}.

As an example consider two endogenous shocks $X_1$ and $X_2$ of a system, and one exogenous shock $Z$. Let the dependence of $(X_1, X_2)$ be governed by a copula $C$, while $Z$ is independent of it. The distribution of
\[
  (Y_1,Y_2) = (\max(X_1,Z), \max(X_2,Z)),
\]
as we know, is given by one of the two forms of the Marshall copula \cite{Mars}, while the other one is obtained from this one by replacing both linking functions $\max$ with the linking functions $\min$. The point is that the systemic shock has either an effect on both components coherent with the two idiosyncratic shocks, or it is conflictive with the two shocks.

On the other hand, if we take asymmetric linking functions (terminology introduced in \cite{DuGiMa}) $\max$ and $\min$, we get the maxmin copula as introduced in \cite{OmRu}. This may be viewed as if the shock $Z$ has opposite effects on the two components $X_1$ and $X_2$. So, it has a beneficial effect on one component (as seen through the linking function max) and detrimental effect on the other one (as seen through the linking function min), so that
\[
  (Y_1,Y_2) = (\max(X_1,Z), \min(X_2,Z)).
\]

We may think of $X_1$ and $X_2$ as r.v.'s representing the respective wealth of two groups of people, and the exogenous shock $Z$ is interpreted as an event that is beneficial to one of the groups and detrimental to the other one. Analogously, $X_1$ and $X_2$ can be thought of as a short and a long investment, respectively, while $Z$ is beneficial only to one of these types of investment. However, following \cite{KoOm} we argue that in this case there is a conceptual reason to replace one of the distribution functions corresponding to idiosyncratic shocks with the corresponding survival function. This way we get again the situation in which the systemic shock works in the same (or the opposite) direction as the idiosyncratic shock, but on both components in the same way.

The first among the important new results of this paper is the construction of the dependent version of the reflected maxmin copulas (RMM for short); the bivariate case is derived in Section \ref{sec:QSI}, cf.\ especially Formula \eqref{inverse_maxmin_final}, the multivariate extension of these copulas in Section \ref{sec:multi}, cf.\ Theorem \ref{thm:multi}; the multivariate form of the original maxmin copulas was given in \cite{DuOmOrRu}. At the end of Section \ref{sec:QSI} we slightly extend the result of \cite{KoOm} by showing that every copula of Eyraud-Farlie-Gumbel-Morgenstern class belongs to either the MM or the RMM class (cf.\ Example \ref{EFGM}\textbf{(b)}) and these are clearly only a very specific subclass of the MM and RMM copulas.  There is some evidence for the claim that RMM copulas are conceptually a better and more natural approach to study MM copulas. An important result in this direction is the iteration procedure of the RMM transformation which we prove to be always convergent and give many properties of it that are useful in applications. Our main result in this direction is Theorem \ref{thm:main} which presents the limit copula depending on the starting copula $C$ (governing the dependence of the idiosyncratic shocks), and the two functions $f$ and $g$ (``generators'' of the one step reflected maxmin transformation) together with the many dependence properties of the so constructed models presented in Section \ref{sec:properties}. Even more importantly, perhaps, we give as a consequence of these results the limit of the maxmin transformation in Theorem \ref{thm:maxmin} thus answering the question proposed in \cite{DuOmOrRu}.

In Section \ref{sec:QSI} we give an overview of the results on reflected maxmin copulas, extend them to the dependent case, give some properties and define the iterative RMM transformation. In Section \ref{sec:iteration} we actually perform the iteration procedure and prove that it is always convergent. This enables us to study in Section \ref{sec:properties} how dependence properties are (dis)inherited when this transformation is applied to a copula. The multivariate case of an RMM copula is first given for 3-variate case for the benefit of the reader in Section \ref{sec:multi3} and finally for the general case in Section \ref{sec:multi}.

\section{  Reflected maxmin copulas for dependent shocks }\label{sec:QSI}

In this section we introduce a dependent version of the bivariate reflected maxmin copulas presented in \cite{KoOm}. We start with two idiosyncratic shocks $X_1$ and $X_2$ and one systemic shock $Z$. We are seeking for the distribution of
\[
  (Y_1,Y_2) = (\max(X_1,Z), \min(X_2,Z)).
\]
Furthermore, we assume that the exogenous shock $Z$, having d.f.\ $G$, is independent of endogenous shocks $(X_1, X_2)$, while the joint d.f.\ of this vector can be expressed as $C(F_1, F_2)$, where $C$ is a given copula. The authors of \cite{DuOmOrRu}, where a dependent version of maxmin copulas is presented, introduce functions $\phi,\psi,\phi^*,\psi_*$ that help express the dependence of $(Y_1, Y_2)$ via a maxmin copula (let us point out that notation we are using here is slightly different from the notation in \cite{DuOmOrRu})
\begin{equation}\label{maxmin}
    T_{\phi,\psi}(C)(u,v) := u+(C(\phi(u),\psi(v)) - \phi(u)) \max\{0,\phi^*(u)- \psi_*(v)\}.
\end{equation}
Here we recall the sets of functions given in \cite{DuOmOrRu} that contain the generating functions $\phi,\psi$ and the corresponding auxiliary functions $\phi^*,\psi_*$: (1) $\mathcal{F}_1$ is the class of nondecreasing functions $\phi\colon [0,1] \rightarrow[0,1]$ such that $\phi(0) =0,\phi(1) =1$ and the function $\phi^* := \mathrm{id}/\phi$ is nondecreasing on $(0, 1]$; and (2) $\mathcal{F}_2$ is the class of nondecreasing functions $\psi\colon[0,1] \rightarrow[0,1]$ such that $\psi(0) =0,\psi(1) =1$ and the function
\[
\psi_*(v) := \left\{
               \begin{array}{ll}
                 \frac{v-\psi(v)}{1-\psi(v)}, & \hbox{if $v \in [0, 1)$;} \\
                 1, & \hbox{if $v = 1$.}
               \end{array}
             \right.
\]
is nondecreasing.\\

We want to develop the reflected maxmin copula from \eqref{maxmin} using the ideas and notation of \cite{KoOm}. First we replace $C(u,v)$ by the copula obtained from $C$ by one flip in the second variable. Following \cite{KoOm} we denote the so obtained copula by $\dot{C}$:
\[
    C(u,v)\mapsto \dot{C}(u,v):=u-C(u,1-v)
\]
to get
\[
    T_{\phi,\psi}(\dot{C})(u,v) := u- \dot{C}(\phi(u),1-\psi(v)) \max\{0,\phi^*(u)- \psi_*(v)\}.
\]
By performing the flip on the result of this transformation we obtain
\begin{equation}\label{inverse_maxmin}
    \dot{T_{\phi,\psi}}(\dot{C})(u,v) := \dot{C}(\phi(u),1-\psi(1-v)) \max\{0,\phi^*(u)- \psi_*(1-v)\}.
\end{equation}

The authors of \cite{DuOmOrRu} were studying transformation $C\mapsto T_{\cdot,\cdot}(C)$ (in our notation) and how the properties of copula $C$ are inherited when this transformation is applied to it. Our aim is to do that for transformation $\dot{C}\mapsto \dot{T_{\cdot,\cdot}}(\dot{C})$. Following \cite{KoOm} we replace the ``generating functions'' $\phi,\psi$ of the maxmin copula with the ``generating functions'' $f,g$ of the reflected maxmin copula and introduce the auxiliary functions $f^*,g^*,\widehat{f},\widehat{g}$:
\begin{align}\label{inverse_generators}
\begin{split}
   f(u) = \phi(u) - u,\ & g(v) = 1 - v - \psi(1 - v), \\
   f^*(u) = \frac{f(u)}{u}, \ \widehat{f}(u)=u+f(u),\   & g^*(v) = \frac{g(v)}{v}, \ \widehat{g}(v)=v+g(v).
\end{split}
\end{align}
A short computation translates formula \eqref{inverse_maxmin} into
\begin{equation}\label{inverse_maxmin_final}
    \dot{T_{f,g}}(\dot{C})(u,v) := uv \frac{\dot{C}(\widehat{f}(u), \widehat{g}(v))} {\widehat{f}(u)\widehat{g}(v)} \max\{0,1- f^*(u) g^*(v)\}.
\end{equation}
This is our extension to the dependent case of the formula \cite[(3)]{KoOm}. As a test, if we put into this formula $C=\Pi$, we first get $\dot{C}=\Pi$, so that \eqref{inverse_maxmin_final} is transformed to \cite[(3)]{KoOm} back.\\

It turns out that the generating and auxiliary functions given in \eqref{inverse_generators} satisfy conditions introduced in \cite{KoOm}
\begin{description}
  \item[(G1)] $f(0)=g(0)=0$, $f(1)=g(1)=0$, $f^*(1)=g^*(1)=0$,
  \item[(G2)] the functions $\widehat{f}(u)=f(u)+u$ and $\widehat{g}(u)=g(u)+u$ are nondecreasing on $[0,1]$.
  \item[(G3)] the functions $f^*$ and $g^*$ are nonincreasing on $(0,1]$.
\end{description}
Actually, it was shown in \cite[Lemma 1]{KoOm} and \cite[Lemma 2]{KoOm} that these conditions are equivalent to the starting conditions \textbf{(F1)}--\textbf{(F3)} on functions  $\phi,\psi$ studied in \cite{OmRu,DuOmOrRu}.

\begin{proposition}
  If $C$ is an arbitrary copula, and $f$ and $g$ satisfy conditions {\rm\textbf{\textbf{(G1)}}}, {\rm\textbf{(G2)}}, and {\rm\textbf{(G3)}}, then $\dot{T_{f,g}}(\dot{C})$ defined by \eqref{inverse_maxmin_final} is a copula.
\end{proposition}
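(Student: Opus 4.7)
The plan is to realize $\dot{T_{f,g}}(\dot{C})$ as a composition of three operations, each of which is already known to preserve the copula property, and thereby inherit the conclusion. Set $\phi(u):=\widehat{f}(u)=u+f(u)$ and $\psi(v):=1-\widehat{g}(1-v)$; reading \eqref{inverse_generators} in reverse, the conditions \textbf{(G1)}--\textbf{(G3)} on $(f,g)$ translate, via the identities $\phi^*(u)=u/\widehat{f}(u)=1/(1+f^*(u))$ and $\psi_*(1-v)=g(v)/\widehat{g}(v)$, into the hypotheses $\phi\in\mathcal{F}_1$ and $\psi\in\mathcal{F}_2$ used in \cite{DuOmOrRu}. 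This equivalence is already established in \cite[Lemmas 1,2]{KoOm}, so the preparatory step is just a change of variables and requires no reproof.

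The three-step chain then runs as follows. First, the flip $C\mapsto\dot{C}$ defined by $\dot{C}(u,v)=u-C(u,1-v)$ sends copulas to copulas, since probabilistically if $(U,V)\sim C$ then $(U,1-V)\sim\dot{C}$. Second, the maxmin transformation $T_{\phi,\psi}$ of \eqref{maxmin} sends copulas to copulas by the main construction of \cite{DuOmOrRu}, once $\phi\in\mathcal{F}_1$ and $\psi\in\mathcal{F}_2$. Third, a second flip again preserves the copula property. Hence $\dot{T_{\phi,\psi}}(\dot{C})$ is a copula.

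It remains to check that this composite coincides with the right-hand side of \eqref{inverse_maxmin_final}, i.e.\ to carry out the ``short computation'' alluded to in the text between \eqref{inverse_maxmin} and \eqref{inverse_maxmin_final}. Starting from \eqref{inverse_maxmin}, one substitutes $\phi(u)=\widehat{f}(u)$ and $1-\psi(1-v)=\widehat{g}(v)$ in the argument of $\dot{C}$, and then rewrites the max-factor by means of
\[
\phi^*(u)-\psi_*(1-v)=\frac{u}{\widehat{f}(u)}-\frac{g(v)}{\widehat{g}(v)}=\frac{uv-f(u)g(v)}{\widehat{f}(u)\widehat{g}(v)}=\frac{uv\bigl(1-f^*(u)g^*(v)\bigr)}{\widehat{f}(u)\widehat{g}(v)},
\]
which immediately yields \eqref{inverse_maxmin_final}.

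The only subtle point, and the sole place where minor care is required, is the boundary $u=0$ or $v=0$, where $\widehat{f}(u)\widehat{g}(v)$ may vanish and formula \eqref{inverse_maxmin_final} formally reads $0/0$; however, the $uv$ prefactor vanishes at least as rapidly, so the expression is understood by continuous extension with value $0$, matching the boundary values of the copula produced by the three-step chain. No genuine obstacle remains: the entire content of the proposition is packaged into \cite[Lemmas 1,2]{KoOm} and the copula property of $T_{\phi,\psi}$ established in \cite{DuOmOrRu}.
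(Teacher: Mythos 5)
Your proof is correct and follows essentially the same route as the paper's: the paper likewise reduces the claim to \cite[Theorem 2.7]{DuOmOrRu} via the flip-transform-flip decomposition developed just before the proposition, together with the fact that flipping a variable preserves the copula property. You merely spell out the change of generators and the algebraic identity linking \eqref{inverse_maxmin} to \eqref{inverse_maxmin_final} in more detail than the paper's one-line argument.
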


\begin{proof}
  This follows immediately from \cite[Theorem 2.7]{DuOmOrRu} by considerations above after taking into account the fact that flipping of one of the variables sends copula to a copula.
\end{proof}


We present in Figure 1 
the scatterplots of the copulas obtained when transformation $\dot{T}$ is applied to three typical examples of copulas, $\Pi,M$, and $W$ in place of $C$. The choice of generating functions in this example is $f(u)= u(1-u)$ and $g(v)=\displaystyle \frac{1}{2}v(1-v)$. Further remarks on Figure 1 will be given after Proposition \ref{nqd}.

\textbf{Comment.} All scatterplots in the paper are drawn using Mathematica software \cite{math}. In the bivariate case we use the exact expressions for copulas and an algorithm to generate samples from a given copula presented in \cite[Chapter 2.9]{Nels}, i.e.\ (i) generate two independent uniform $(0, 1)$ variates $u$ and $t$; (ii) set $v = C_u^{(-1)}(t)$, where $C_u^{(-1)}$ denotes a quasi-inverse of the derivative $C_u$; (iii) the sample item is $(u,v)$. \qed

\begin{figure}[h!]\label{fig:slika1}
            \includegraphics[width=0.32\textwidth]{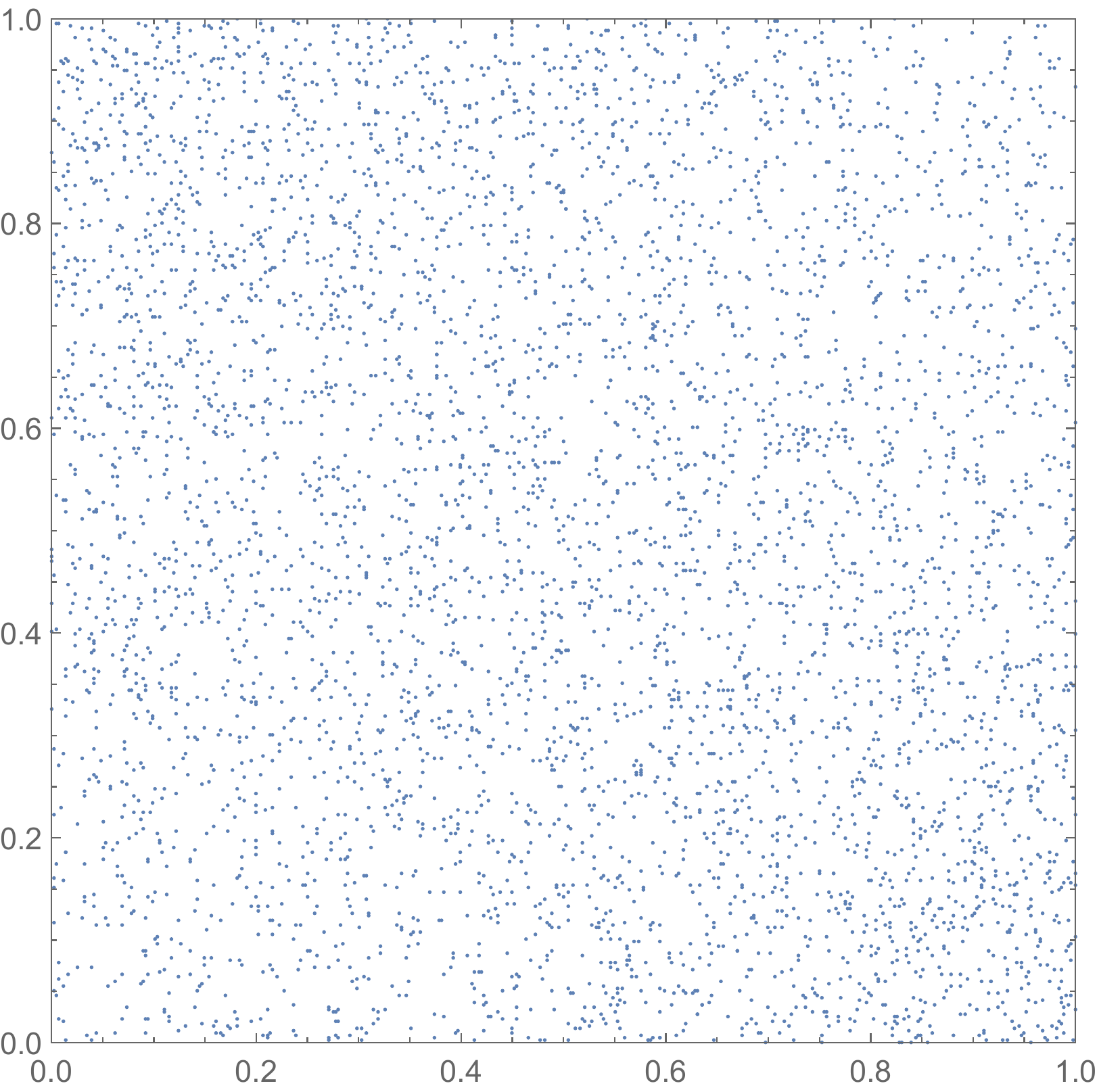} \hfil \includegraphics[width=0.32\textwidth]{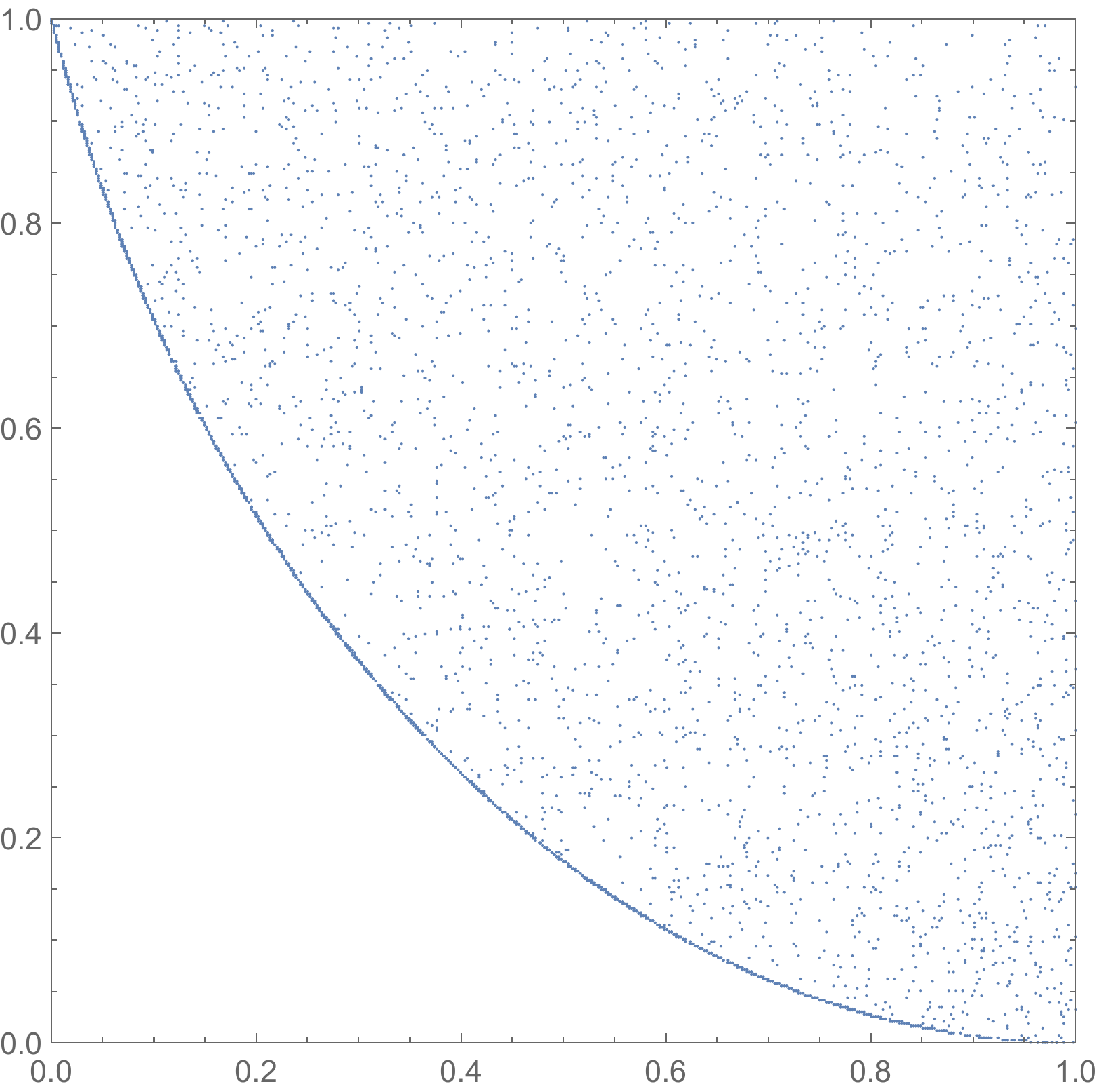} \hfil \includegraphics[width=0.32\textwidth]{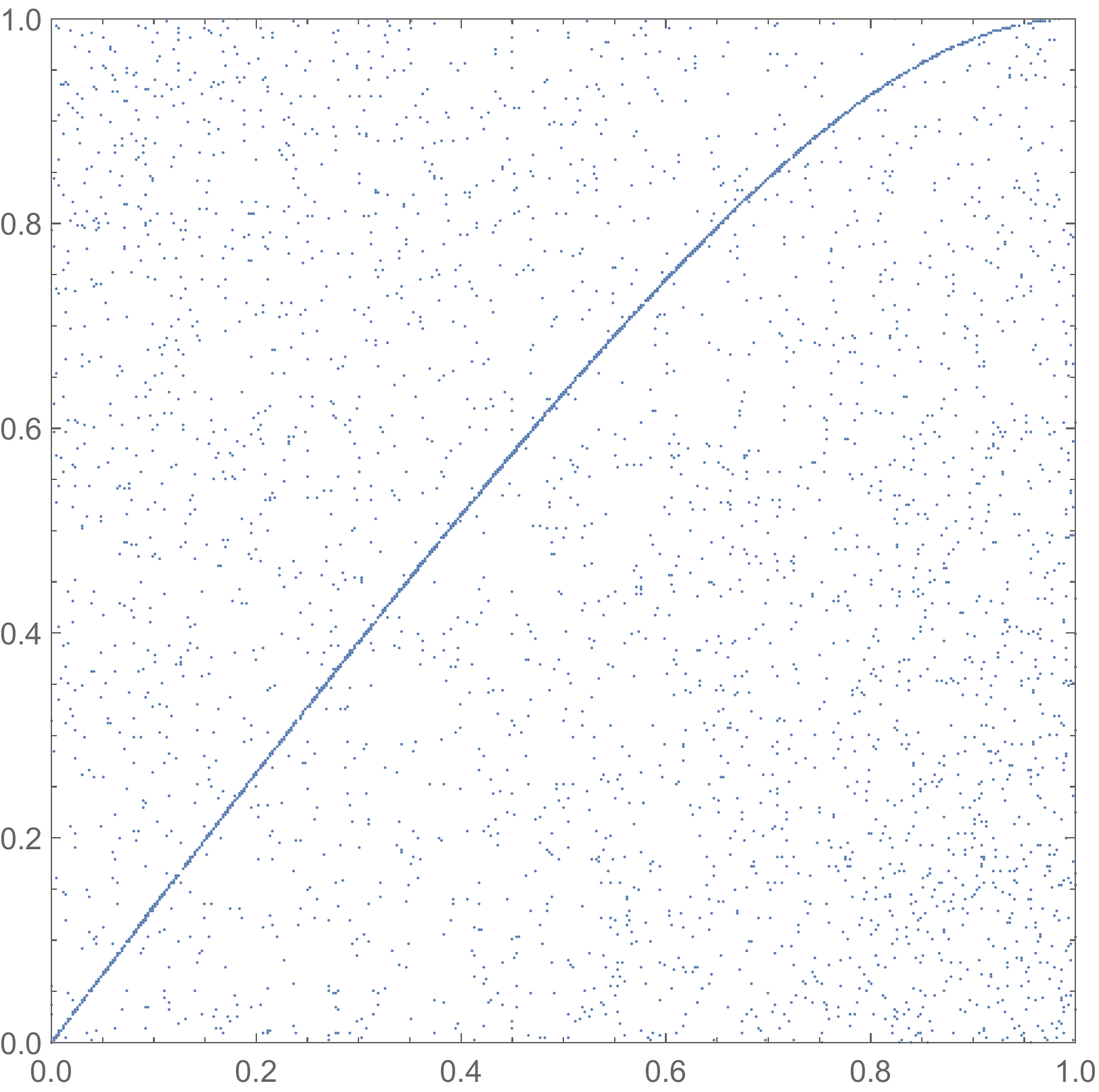}
            \caption{ Transformation $\dot{T}$ on copulas $\dot{\Pi},\dot{M}$, and $\dot{W}$, respectively }
\end{figure}

It has been observed that the formulas for reflected maxmin copulas \cite[(2)\&(3)]{KoOm} are conceptually clearer to the ones for maxmin copulas \cite[(1)]{KoOm} (rewritten from \cite[(2)]{OmRu}) in the case that the shocks are independent. We have now seen that the same is obviously true for the case of dependent shocks when we compare formula \eqref{inverse_maxmin_final} to formula \eqref{maxmin} (rewritten from \cite[(2.3)]{DuOmOrRu}).

At this point we want to present the first one of the properties that is being inherited by the transformation $\dot{C}\mapsto \dot{T_{\cdot,\cdot}}(\dot{C})$. Here we use the abbreviation PQD (respectively NQD) for the property that a random vector $(Y_1, Y_2)$ (or copula $C_{Y_1,Y_2}$) is \emph{positively quadrant dependent} (respectively \emph{negatively quadrant dependent}); this means that $C_{Y_1,Y_2}(u, v) \geqslant \Pi(u, v)$ (respectively $C_{Y_1,Y_2}(u, v) \leqslant \Pi(u, v)$) for all $u, v\in[0,1]$. For further explanation of the properties PQD/NQD we refer the reader to \cite[Chapter 5]{Nels} and \cite[p.\ 64]{DuSe}.



An elaboration of \cite[Theorem 3.1(i)]{DuOmOrRu} is clearly obtained using a combination of that result and the obvious fact that reflection is exchanging PQD copulas and NQD copulas.

\begin{proposition}\label{nqd}
  Let $C$ be a PQD copula, or equivalently let $\dot{C}$ be an NQD copula, then $\dot{T_{f,g}}(\dot{C})(u,v)$ is an NQD copula.
\end{proposition}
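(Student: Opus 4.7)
The plan is to follow the hint in the sentence preceding the statement: first identify $\dot{T_{f,g}}(\dot{C})$ with the second-variable reflection of $T_{\phi,\psi}(C)$, where $\phi,\psi$ are the maxmin generators linked to $f,g$ via \eqref{inverse_generators}, and then combine \cite[Theorem~3.1(i)]{DuOmOrRu} with the observation that the one-variable reflection of a PQD copula is NQD. That last fact is immediate from $\dot{C}(u,v) = u - C(u,1-v)$: $C(u,1-v) \geq u(1-v)$ rearranges to $\dot{C}(u,v) \leq uv$.

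The identification $\dot{T_{f,g}}(\dot{C}) = \dot{T_{\phi,\psi}(C)}$ is a short computation, essentially already done in the derivation of \eqref{inverse_maxmin_final}: substituting $\dot{C}(a,b) = a - C(a, 1-b)$ into \eqref{inverse_maxmin} and then flipping in the second variable reproduces exactly \eqref{inverse_maxmin_final}. Once that is in hand, \cite[Theorem~3.1(i)]{DuOmOrRu} gives $T_{\phi,\psi}(C)$ PQD from $C$ PQD, and reflecting in the second variable delivers the NQD conclusion for $\dot{T_{f,g}}(\dot{C})$.

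A shorter direct route also exists, working straight from \eqref{inverse_maxmin_final}: we only need the two bounds $\dot{C}(\widehat{f}(u),\widehat{g}(v))/(\widehat{f}(u)\widehat{g}(v)) \leq 1$ and $\max\{0, 1 - f^*(u) g^*(v)\} \leq 1$. The first is just the NQD hypothesis $\dot{C} \leq \Pi$ applied at $(\widehat{f}(u), \widehat{g}(v))$, and the second follows because $f^* \geq 0$ and $g^* \geq 0$ on $(0,1]$, an immediate consequence of \textbf{(G1)} and \textbf{(G3)} (nonincreasing with right-endpoint value $0$). Multiplying the two bounds by the leading $uv$ yields $\dot{T_{f,g}}(\dot{C})(u,v) \leq uv = \Pi(u,v)$.

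I do not foresee a serious obstacle in either route. In the first route the algebraic identification with the maxmin reflection is the main content, but it is essentially the same manipulation already implicit in the passage from \eqref{maxmin} to \eqref{inverse_maxmin_final}; in the second route the only edge case is where $\widehat{f}(u)\widehat{g}(v)$ vanishes, which is harmless since $\dot{T_{f,g}}(\dot{C})$ is then zero as well.
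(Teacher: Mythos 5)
Your first route is exactly the paper's argument: the paper proves this proposition in one sentence by combining \cite[Theorem 3.1(i)]{DuOmOrRu} with the observation that a one-variable reflection exchanges PQD and NQD, so on that route you are fully aligned with the source. Your second, direct route is a genuinely different and arguably preferable argument: it reads the NQD conclusion straight off formula \eqref{inverse_maxmin_final} by bounding the factor $\dot{C}(\widehat{f}(u),\widehat{g}(v))/(\widehat{f}(u)\widehat{g}(v))$ by $1$ (the NQD hypothesis evaluated at $(\widehat{f}(u),\widehat{g}(v))$) and the factor $\max\{0,1-f^*(u)g^*(v)\}$ by $1$ (since $f^*,g^*\geqslant 0$ by \textbf{(G1)} and \textbf{(G3)}), and it correctly handles the degenerate case $\widehat{f}(u)\widehat{g}(v)=0$. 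What the direct route buys is self-containment --- it avoids any appeal to the external Theorem 3.1(i) and to the algebraic identification $\dot{T_{f,g}}(\dot{C})=\dot{T_{\phi,\psi}(C)}$ --- at the cost of not illustrating the general principle (reflection swaps PQD and NQD) that the paper wants to emphasize and reuses later, e.g.\ in Corollary \ref{pqd}. Both routes are correct.
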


\textbf{Remark.} We can illustrate this result via Figure 1. Copula $W$ is NQD, so that $\dot{W}=M$ is PQD and Proposition \ref{nqd} gives no information on quadrant dependence in this case. This is in agreement with the third image of Figure 1 on which it is difficult to find any regularity. On the other hand, copula $M$ is PQD, so that $\dot{M}=W$ is NQD and the second image shows a regularity in agreement with the proposition. \qed

We conclude this section with two simple examples which we believe may help us emphasize the importance of the RMM and MM copulas. The first one deals with copulas introduced in \cite{RoLaUbFl}
\begin{equation}\label{RodUbe}
  C(u, v) = uv + f(u)g(v),
\end{equation}
the second one with the famous Eyraud-Farlie-Gumbel-Morgenstern copulas (cf.\ \cite[p.\ 26]{DuSe}), i.e.\ for $\alpha\in[-1,1]$
\begin{equation}%
  C(u, v) = uv + \alpha u(1-u)v(1-v).
\end{equation}

\begin{example}\label{EFGM}
  \begin{description}
     \item[(a)] Let functions $f(t)$ and $g(1-t)$ satisfy conditions {\rm\bf (G1)}, {\rm\bf (G2)} and {\rm\bf (G3)}, and let functions $f$ and $g$ satisfy conditions of \cite[Theorem 2.3]{RoLaUbFl}. Then copula $C$, defined by \eqref{RodUbe} is a maxmin copula.
     \item[(b)] If $\alpha\geqslant0$, then EFGM copula is a maxmin copula, if $\alpha\leqslant0$, then EFGM copula is a reflected maxmin copula.
   \end{description}
\end{example}

\begin{proof}
  Consider $\dot{C}(u,v)=u-C(u,1-v)=uv-f(u)g(1-v)$ to get \textbf{(a)}. Claim \textbf{(b)} follows since $f(u)=\alpha u(1-u)$ and $g(v)=v(1-v)$ satisfy conditions \textbf{(G1)}, \textbf{(G2)}, and \textbf{(G3)}.
\end{proof}

.


\section{ Iterating reflected maxmin copulas }\label{sec:iteration}

In this section we iterate the transformation $\dot{C}\mapsto \dot{T_{\cdot,\cdot}}(\dot{C})$ introduced in the previous section as the authors of \cite{DuOmOrRu} iterated transformation $C\mapsto T_{\cdot,\cdot}(C)$ (in our notation), i.e., we define $\dot{T_{\cdot,\cdot}}^{(n)}:= \dot{T_{\cdot,\cdot}}(\dot{T_{\cdot,\cdot}}^{(n-1)})$. A straightforward application of the mathematical induction principle yields
\begin{equation}\label{inverse_maxmin_n}
    \dot{T_{f,g}}^{(n)}(\dot{C})(u,v) := uv \frac{\dot{C}(\widehat{f}^{(n)}, \widehat{g}^{(n)})} {\widehat{f}^{(n)}\widehat{g}^{(n)}} \prod_{k=0}^{n-1} \max\{0,1- f^*(\widehat{f}^{(k)}(u)) g^*(\widehat{g}^{(k)}(v))\},
\end{equation}
where $\widehat{f}^{(0)}=\widehat{g}^{(0)}=\mathrm{id}$. Since $\widehat{f}(u),\widehat{g}(v)$ are nondecreasing on $[0,1]$ by \cite[Lemma 1, \textbf{(G2)}]{KoOm}, their value is no greater than their value at 1 which is equal to 1, and no smaller than their value at 0 which is equal to 0. So, it follows that the iterates are well-defined. Before we go into showing that all these sequences are convergent, we start by a simple observation based on Proposition \ref{nqd}.

\begin{corollary}\label{pqd}
  If $C$ is a PQD copula, then $\dot{T_{f,g}}^{(n)}(\dot{C}) (u,v)$ is an NQD copula for all $n\in\mathds{N}$.
\end{corollary}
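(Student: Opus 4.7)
The plan is a direct induction on $n$, with Proposition \ref{nqd} providing the base case $n=1$ and a short bookkeeping step handling the inductive step.

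For the inductive step, suppose $D_{n-1}:=\dot{T_{f,g}}^{(n-1)}(\dot{C})$ is NQD. By the recursive definition, the next iterate $D_n:=\dot{T_{f,g}}^{(n)}(\dot{C})$ arises by substituting $D_{n-1}$ into the $\dot{C}$-slot of formula \eqref{inverse_maxmin_final}. To invoke Proposition \ref{nqd} I need to express $D_{n-1}$ as $\dot{E}$ for some PQD copula $E$. I would achieve this via the involutivity of the flip: for any copula $D$ the computation $u-\dot{D}(u,1-v)=u-(u-D(u,v))=D(u,v)$ shows that flipping twice returns the original copula. Hence $D_{n-1}=\dot{E}$ with $E:=\dot{D}_{n-1}$, and by the ``or equivalently'' part of Proposition \ref{nqd}, $E$ is PQD precisely because $D_{n-1}$ is NQD. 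Proposition \ref{nqd} applied to $E$ then yields that $D_n=\dot{T_{f,g}}(\dot{E})$ is NQD, closing the induction.

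No serious obstacle is expected. The only care required is to reconcile the fact that Proposition \ref{nqd} is formulated in terms of a PQD copula appearing flipped inside $\dot{T_{f,g}}$, whereas during the iteration the NQD copula $D_{n-1}$ is fed in directly. The single identity that flipping is an involution mediates between the two viewpoints, and otherwise the argument is entirely formal; the whole proof amounts to one induction step plus a one-line computation.
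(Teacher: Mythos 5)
Your proof is correct and follows the same route as the paper, which simply states that the result ``follows inductively by Proposition \ref{nqd}''. Your extra bookkeeping step --- using the involutivity of the flip to write $D_{n-1}=\dot{E}$ with $E$ PQD so that Proposition \ref{nqd} applies --- is exactly the detail the paper leaves implicit.
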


\begin{proof} The result follows inductively by Proposition \ref{nqd}.
\end{proof}

\begin{lemma}\label{first}
  The sequences of functions $\widehat{f}^{(n)},\widehat{g}^{(n)},$ are pointwise convergent. Additionally, if $\alpha,$ respectively $\beta,$ is any of their limit values, then it satisfies the equation $\widehat{f}(\alpha)=\alpha$, respectively $\widehat{g}(\beta)= \beta$.
\end{lemma}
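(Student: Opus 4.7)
The plan is to combine monotone convergence with a continuity argument at the limit.

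First I would show that $\widehat{f}(u)\geq u$ on $[0,1]$. Indeed, by \textbf{(G3)} the function $f^*$ is nonincreasing on $(0,1]$ and by \textbf{(G1)} $f^*(1)=0$, so $f^*\geq 0$ on $(0,1]$; hence $f(u)=u\,f^*(u)\geq 0$, which together with $f(0)=0$ gives $\widehat{f}(u)=u+f(u)\geq u$. Iterating this yields $\widehat{f}^{(n+1)}(u)=\widehat{f}(\widehat{f}^{(n)}(u))\geq \widehat{f}^{(n)}(u)$, so the sequence is nondecreasing, and it is already observed after \eqref{inverse_maxmin_n} that it stays in $[0,1]$. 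Monotone convergence gives a pointwise limit $\alpha=\alpha(u)\in[u,1]$; the same reasoning produces $\widehat{g}^{(n)}(v)\to\beta(v)\in[v,1]$.

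Next I would prove the crucial auxiliary fact that \textbf{(G2)} and \textbf{(G3)} together force $\widehat{f}$ to be continuous on $(0,1]$. A discontinuity of $f^*$ at some $u_0\in(0,1]$ would, by \textbf{(G3)}, be a downward jump, and writing $\widehat{f}(u)=u(1+f^*(u))$ it would transfer to a downward jump of $\widehat{f}$ at $u_0$, contradicting \textbf{(G2)}. Hence $f^*$, and therefore $\widehat{f}$, is continuous on $(0,1]$; the same applies to $g^*$ and $\widehat{g}$.

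With continuity in hand I would pass to the limit in $\widehat{f}^{(n+1)}(u)=\widehat{f}(\widehat{f}^{(n)}(u))$: for $u>0$ one has $\alpha\geq u>0$ and $\widehat{f}$ is continuous at $\alpha$, so $\widehat{f}(\alpha)=\lim_n\widehat{f}^{(n+1)}(u)=\alpha$; for $u=0$, \textbf{(G1)} gives $\widehat{f}^{(n)}(0)=0$ for all $n$ and the identity is trivial. The argument for $\widehat{g}(\beta)=\beta$ is identical. The main obstacle, in my view, is precisely this continuity step: the recursion alone only delivers $\lim_{u\nearrow\alpha}\widehat{f}(u)=\alpha$, and a jump of $\widehat{f}$ at $\alpha$ would defeat the claim; the key observation is that the pair of hypotheses \textbf{(G2)}--\textbf{(G3)} is jointly incompatible with any discontinuity of $\widehat{f}$ on $(0,1]$.
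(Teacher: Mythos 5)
Your proof is correct and follows essentially the same route as the paper: monotone convergence of the nondecreasing, bounded iterates, followed by passing to the limit in the recursion $\widehat{f}^{(n+1)}=\widehat{f}\circ\widehat{f}^{(n)}$. The one difference is that you explicitly justify the limit-passing step by deriving continuity of $\widehat{f}$ on $(0,1]$ from \textbf{(G2)}--\textbf{(G3)} (treating $u=0$ separately), a point the paper's proof takes for granted; your argument for that continuity is sound.
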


\begin{proof}
We recall the fact that $\widehat{f}(u),\widehat{g}(v),$ are nondecreasing on $[0,1]$ by \cite[Lemma 1, \textbf{(G2)}]{KoOm} implying that all the iterates are monotone nondecreasing. A simple observation $\widehat{f}(u)=u+f(u)\geqslant u$ implies that $\widehat{f}^{(n)}(u)=\widehat{f}^{(n-1)}(\widehat{f}(u))\geqslant \widehat{f}^{(n-1)}(u)$ so that this sequence is monotone for all $u\in [0,1]$; and similarly for $g$. Choose an arbitrary $u\in[0,1]$ and let $\alpha$ be the limit of $\widehat{f}^{(n)}(u)$. Then, $\widehat{f}(\alpha)$ is the limit of $\widehat{f}^{(n+1)}(u)$, so that $\widehat{f}(\alpha)= \alpha$; and similarly for $\beta$.
\end{proof}

\textbf{Remark.} It is an interesting question whether and perhaps when the mapping $\dot{C}\mapsto \dot{T_{f,g}}(\dot{C})$ is a contraction on a compact metric space of all copulas. It is easy to see that for any copulas $C,D$ we have that
\[
    \|\dot{T_{f,g}}(\dot{C})(u,v)-\dot{T_{f,g}}(\dot{D})(u,v)\| = \kappa(u,v) \|\dot{C}(u,v)-\dot{D}(u,v)\|,
\]
where $\kappa(u,v)$ is a product of $u/\widehat{f}(u), v/\widehat{g}(v)$ and $\max\{0,1-f^*(u)g^*(v)\}$. Each of these factors is no greater than 1, however it tends to 1 when $u$ and $v$ tend to 1. So, we have that $0\leqslant\kappa(u,v)\leqslant1$ which proves that this mapping is non-expansive. Since it is not possible to find a constant upper bound $k<1$ for $\kappa$ it is not a contraction. We omit the details not to overlengthen the paper any further. It is somewhat surprising that we are nevertheless able to prove the convergence of the iterates of this map and in the case of Corollary \ref{referee} below even the existence of a unique fixed point to which the iterates converge independently of the starting point. \qed

By \cite[Proposition 2.2.(ii)]{DuOmOrRu} $\widehat{f}$ is equal to the identity function on the interval $[u, 1]$, as soon as we can find an $u\in (0,1]$ such that $\widehat{f}(u) = u$. (Actually, it is useful to write down the fact written there in the original form: if $f(u)=0$ for some $u\in (0,1]$, then $f$ is identically equal to zero on the interval $[u, 1]$.) Let us denote by $\alpha$ the smallest number $u$ with this property. So, we have that $\widehat{f}(u) = u$ for all $u\in[\alpha,1]$ and $\alpha$ is the smallest number of the kind. Observe that $\alpha=0$ is equivalent to saying that $f$ is identically equal to zero on $[0,1]$, and $\alpha=1$ is equivalent to saying that $f$ has no zero on $(0,1)$. The number $\beta$ corresponds to the function $g$ in the same way. So, in particular $\widehat{g}(v) = v$ for all $v\in[\beta,1]$ and $\beta$ is the smallest number with this property. This helps us computing the limits of sequences $\widehat{f}^{(n)}(u),\widehat{g}^{(n)}(v)$:

\begin{lemma}\label{limit}
  The limits of the above sequences exist and are given by
  \begin{align*}
    \widehat{f}^{(\infty)}(u):=\lim_{n\rightarrow\infty}\widehat{f}^{(n)}(u) & = \left\{
    \begin{array}{ll}
        0, & \hbox{$u=0$;} \\
        \alpha, & \hbox{$0<u<\alpha$;} \\
        u, & \hbox{$u\geqslant\alpha$.}
        \end{array}
    \right. \\
    \widehat{g}^{(\infty)}(v):=\lim_{n\rightarrow\infty}\widehat{g}^{(n)}(v) & = \left\{
    \begin{array}{ll}
        0, & \hbox{$v=0$;} \\
        \beta, & \hbox{$0<v<\beta$;} \\
        v, & \hbox{$v\geqslant\beta$.}
        \end{array}
    \right.\\
  \end{align*}
\end{lemma}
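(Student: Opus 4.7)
The plan is to split on the three regions of the piecewise formula and use the monotonicity results together with the characterization of $\alpha$ (and $\beta$) as the smallest fixed point of $\widehat{f}$ (resp.\ $\widehat{g}$) in $(0,1]$. For $u=0$, condition \textbf{(G1)} gives $\widehat{f}(0)=0$, so $\widehat{f}^{(n)}(0)=0$ for every $n$. For $u\in[\alpha,1]$, the cited consequence of \cite[Proposition 2.2.(ii)]{DuOmOrRu} says $\widehat{f}$ is the identity on $[\alpha,1]$, so $\widehat{f}^{(n)}(u)=u$ for all $n$. Both cases match the claimed limit immediately.

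The only nontrivial case is $u\in(0,\alpha)$. First I would show the sequence $\widehat{f}^{(n)}(u)$ stays below $\alpha$: since $\widehat{f}$ is nondecreasing by \textbf{(G2)} and $\widehat{f}(\alpha)=\alpha$, monotonicity gives $\widehat{f}(u)\leqslant\alpha$ for any $u\leqslant\alpha$, and an easy induction propagates this to $\widehat{f}^{(n)}(u)\leqslant\alpha$ for all $n$. Combining this with the monotone nondecreasing nature of the iterates established in the proof of Lemma~\ref{first} (which used $\widehat{f}(u)\geqslant u$), the sequence converges to some limit $L\in(0,\alpha]$; the strict positivity $L\geqslant u>0$ comes from the sequence being nondecreasing starting at $u$.

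Next, Lemma~\ref{first} tells me that $L$ must satisfy $\widehat{f}(L)=L$, equivalently $f(L)=0$. But by construction $\alpha$ is the \emph{smallest} number in $(0,1]$ with $f(\alpha)=0$. Since $0<L\leqslant\alpha$ and $f(L)=0$, the minimality of $\alpha$ forces $L=\alpha$, which is exactly the claimed middle case. The whole argument for $\widehat{g}^{(n)}(v)$ is identical after replacing $f,\widehat{f},\alpha$ with $g,\widehat{g},\beta$.

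No step looks genuinely hard; the only thing to be careful about is correctly invoking the minimality of $\alpha$ after obtaining a fixed point $L$ in $(0,\alpha]$, and making sure the degenerate situations $\alpha\in\{0,1\}$ (i.e.\ $f\equiv 0$ or $f$ has no zero in $(0,1)$) are absorbed by the boundary cases $u=0$ and $u\geqslant\alpha$, so that the middle clause is simply vacuous when $\alpha=0$ and the boundary clause handles $u=1$ when $\alpha=1$.
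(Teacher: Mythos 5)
Your argument is correct and is essentially the one the paper intends: the paper states Lemma \ref{limit} without an explicit proof, presenting it as an immediate consequence of Lemma \ref{first} together with the preceding characterization of $\alpha$ (and $\beta$) as the smallest point from which $\widehat{f}$ (resp.\ $\widehat{g}$) is the identity. Your combination of monotone convergence, the fixed-point property of the limit from Lemma \ref{first}, and the minimality of $\alpha$ to pin down the limit on $(0,\alpha)$ is exactly the intended reasoning, including the correct treatment of the degenerate cases $\alpha\in\{0,1\}$.
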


We are now in position to compute the limit copula of the sequence given by \eqref{inverse_maxmin_n}. We divide the unit square $[0,1]^2=[0,1]\times [0,1]$ into four rectangles with respect to the lines $u=\alpha$ and $v=\beta$, called suggestively the NW, NE, SW, SE corner of the square. We will show the convergence in passing while computing the limits 
corner by corner. In order to simplify the notation we write
\[
{\overline{C}_{f,g}}^\sigma:=\lim_{n\rightarrow\infty}\dot{T_{f,g}}^{(n)}(\dot{C}).
\]

\begin{lemma}\label{corners}
\begin{description}
  \item[(a)] For $u\geqslant\alpha$ and $v\geqslant\beta$ we have
    \[
        {\overline{C}_{f,g}}^\sigma(u,v)=\dot{C}(u,v).
    \]
  \item[(b)] For $u\geqslant\alpha$ and $v\leqslant\beta$ we have
    \[
        {\overline{C}_{f,g}}^\sigma(u,v)=\frac{v}{\beta}\dot{C}(u,\beta).
    \]
  \item[(c)] For $u\leqslant\alpha$ and $v\geqslant\beta$ we have
    \[
        {\overline{C}_{f,g}}^\sigma(u,v)=\frac{u}{\alpha}\dot{C}(\alpha,v).
    \]
  \item[(d)] In the remaining case it holds that
\[
    {\overline{\Pi}_{f,g}}^\sigma(u,v)=\left\{
                                             \begin{array}{ll}
                                               {\displaystyle uv\prod_{k=0}^{\infty} \left(1 - f^*(\widehat{f}^{(k)}(u))g^*(\widehat{g}^{(k)}(v))\right)}
, & \hbox{if $f(u)g(v)<uv$;} \\
                                               0, & \hbox{if $f(u)g(v)\geqslant uv$;}
                                             \end{array}
                                           \right.
\]
where the product in the first case is always convergent.
\end{description}
  \end{lemma}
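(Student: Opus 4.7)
The approach is to plug directly into the explicit formula \eqref{inverse_maxmin_n} for $\dot{T_{f,g}}^{(n)}(\dot{C})$ and pass to the limit using Lemma \ref{limit}. The division into four cases reflects the qualitatively different behavior of the iterates $\widehat{f}^{(k)}(u)$ and $\widehat{g}^{(k)}(v)$ on the two sides of the thresholds $\alpha$ and $\beta$.

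For part (a), I exploit the fact that $\widehat{f}(u)=u$ on $[\alpha,1]$, so when $u\geq\alpha$ one has $\widehat{f}^{(k)}(u)=u$ for every $k\geq0$; likewise $\widehat{g}^{(k)}(v)=v$ when $v\geq\beta$. Consequently $f^*(u)=f(u)/u=0=g^*(v)$, every factor in the product in \eqref{inverse_maxmin_n} equals $1$, and the prefactor $uv\,\dot{C}(u,v)/(uv)$ collapses to $\dot{C}(u,v)$. Parts (b) and (c) are mirror images; for (b) the same reasoning still kills every factor in the product, so what remains is $\frac{v}{\widehat{g}^{(n)}(v)}\,\dot{C}(u,\widehat{g}^{(n)}(v))$. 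Combining $\widehat{g}^{(n)}(v)\to\beta$ from Lemma \ref{limit} with the standard $1$-Lipschitz continuity of copulas lets me pass to the limit and obtain $\frac{v}{\beta}\dot{C}(u,\beta)$. The agreement of (a) and (b) on the seam $v=\beta$ is immediate.

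Part (d) is the substantive case. With $C=\Pi$ we have $\dot{\Pi}=\Pi$ and $\Pi(a,b)/(ab)\equiv1$, so \eqref{inverse_maxmin_n} simplifies to $uv\prod_{k=0}^{n-1}\max\{0,1-f^*(\widehat{f}^{(k)}(u))\,g^*(\widehat{g}^{(k)}(v))\}$. If $f(u)g(v)\geq uv$, then $f^*(u)g^*(v)\geq 1$ and the $k=0$ factor already vanishes, so the whole sequence is $0$ from $n=1$ on. Otherwise $f^*(u)g^*(v)<1$, and I claim the same inequality holds for every $k$: by \textbf{(G3)} the functions $f^*,g^*$ are nonincreasing on $(0,1]$ and the iterates $\widehat{f}^{(k)}(u),\widehat{g}^{(k)}(v)$ are nondecreasing in $k$, so $f^*(\widehat{f}^{(k)}(u))\,g^*(\widehat{g}^{(k)}(v))\leq f^*(u)g^*(v)<1$. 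The partial products therefore form a nonincreasing sequence in $[0,1]$ and hence converge, yielding the stated infinite product (the $\max$ with $0$ becomes superfluous).

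I do not anticipate a conceptual obstacle; the main work is bookkeeping, namely checking that the four pieces agree on the shared boundaries $u=\alpha$ and $v=\beta$ and that degenerate cases such as $u=0$, or $\alpha=0$ (which makes $f$ vanish identically on $[0,1]$), are consistently absorbed by the formulas. These verifications are immediate once the case distinctions are written down.
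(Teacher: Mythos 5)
Parts \textbf{(a)}--\textbf{(c)} of your argument are correct and essentially identical to the paper's (which only writes out \textbf{(a)} and says \textbf{(b)}, \textbf{(c)} are similar); your explicit passage to the limit in \textbf{(b)} via $\widehat{g}^{(n)}(v)\to\beta$ and continuity of $\dot{C}$ is exactly the intended filling-in. Your observation in \textbf{(d)} that all factors are positive and $<1$ when $f(u)g(v)<uv$, via \textbf{(G3)} and monotonicity of the iterates, is also correct and is the paper's starting point.

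The gap is in the final step of \textbf{(d)}. You conclude from monotonicity and boundedness that the partial products converge, and stop there. But a nonincreasing sequence of partial products in $[0,1]$ may tend to $0$, in which case the infinite product is, in the standard terminology the paper adopts, \emph{divergent} (``an infinite product is convergent only if in addition its limit is nonzero''). The assertion ``the product in the first case is always convergent'' therefore requires showing the limit is strictly positive, and this is the substantive analytic content of \textbf{(d)}, occupying most of the paper's proof. The paper does it in three steps, assuming first $\alpha=\beta=1$: for $u+v>1$ the Fr\'{e}chet--Hoeffding lower bound applied to the copula $\dot{T_{f,g}}^{(n)}(\dot{\Pi})=uv\prod_{k=0}^{n-1}(\cdots)$ forces the partial products to stay above $(u+v-1)/(uv)>0$; then, invoking the criterion that $\prod_k a_k$ converges iff $\sum_k(1-a_k)$ converges, the general case $u,v>0$ is reduced to the previous one by a tail shift, since $\widehat{f}^{(k)}(u)+\widehat{g}^{(k)}(v)>1$ for all $k$ beyond some index $j$, so the series $\sum_k f^*(\widehat{f}^{(k)}(u))g^*(\widehat{g}^{(k)}(v))$ has a convergent tail and hence converges. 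Finally, when $\alpha<1$ or $\beta<1$ the paper evaluates the limit at $(\alpha,\beta)$, where it equals $\alpha\beta>0$, and uses continuity of the limit copula to get a positive lower bound nearby, after which the same reduction applies. None of this appears in your proposal, so as written it establishes existence of the limit of the sequence but not the claimed convergence (i.e.\ positivity) of the infinite product, nor does it address the case $\alpha<1$ or $\beta<1$ at all.
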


\begin{proof} \textbf{(a)}
  Since $\widehat{f}^{(\infty)}(u)=u$ and $\widehat{g}^{(\infty)}(v)=v$, it remains to see that the product in \eqref{inverse_maxmin_n} equals to 1. Indeed, $\widehat{f}(u)=u$, so that $\widehat{f}^{(k)}(u)=u$ for all $k\geqslant0$, while $f(u)=0$ and consequently $f^*(u)=0$, and the same kind of considerations apply to $g$. So, it follows that the product on the right hand side of \eqref{inverse_maxmin_n} is identically equal to one. The proof of \textbf{(b)} and \textbf{(c)} is based on similar ideas.

  \textbf{(d)} We first assume that $\alpha=\beta=1$. If $f(u)g(v) \geqslant uv$, the first factor of the product in \eqref{inverse_maxmin_n} is zero and the desired conclusion follows. If $f(u)g(v) < uv$, we only need to show convergence of the product, the rest is obvious from \eqref{inverse_maxmin_n}. Now, the first factor, and consequently all of them, are strictly positive and strictly smaller than $1$. So, the sequence of partial products is positive and decreasing and always converges. However, an infinite product is convergent only if in addition its limit is nonzero. In case that $u+v>1$ we use the Fr\'{e}chet-Hoeffding lower bound for our copula to see that this is indeed so and the desired result follows. It remains to treat the general case of $u,v>0$ which we will reduce to the previous one. We will use a well-known fact that (C) \emph{a product $\prod_{k=0}^{\infty}a_k$ converges if and only if the series $\sum_{k=0}^{\infty}(1-a_k)$ converges. }This proves, in particular, that the series
\begin{equation}\label{series}
  \sum_{k=0}^{\infty} f^*(\widehat{f}^{(k)}(u))g^*(\widehat{g}^{(k)}(v))
\end{equation}
converges for $u+v>1$ by the above. 
Next, if $u,v>0$, then the sequences $\widehat{f}^{(k)} (u)$ respectively $\widehat{g}^{(k)}(v)$ are nondecreasing and converging to $\alpha=1$ respectively $\beta=1$. So, given $j$ big enough we have $\widehat{f}^{(k)} (u)+\widehat{g}^{(k)}(v)>1$ for all $k \geqslant j$. So, if we denote $u_0=\widehat{f}^{(j)} (u)$ and $v_0=\widehat{g}^{(j)}(v)$ we get
\[
    \sum_{k=j}^{\infty} f^*(\widehat{f}^{(k-j)}(u_0)) g^*(\widehat{g}^{(k-j)}(v_0)) = \sum_{k=0}^{\infty} f^*(\widehat{f}^{(k)}(u)) g^*(\widehat{g}^{(k)}(v)),
\]
so that the series on the left hand side must be convergent because it is a residual series of a convergent series and $u_0+v_0>1$. It follows that the series on the right hand side is convergent and consequently the product is convergent by (C).

It remains to treat the case that either $\alpha<1$ or $\beta<1$. If we choose $u=\alpha$ and $v=\beta$, we get easily ${\overline{\Pi}_{f,g}}^\sigma(u,v)=\alpha\beta>0$. Now, these iterates are copulas and so is the limit, yielding that it is continuous. So, for $u$ close to $\alpha$ and $v$ close to $\beta$, we still have, say, ${\overline{\Pi}_{f,g}}^\sigma(u, v) \geqslant \alpha \beta /2 > 0$. Since the limit is positive, the rest of the proof goes as above.
\end{proof}

\begin{theorem}\label{thm:main}
  The limit of the sequence \eqref{inverse_maxmin_n} always exists and is equal to $\dot{C}(u,v)$ on the NE corner, to
\[
    \frac{v}{\beta}\dot{C}(u,\beta),\quad\mbox{respectively}\quad
    \frac{u}{\alpha}\dot{C}(\alpha,v),
\]
on the NW, respectively the SE corner; while on the SW corner it equals to either
\[
    uv\frac{\dot{C}(\alpha, \beta)} {\alpha\beta} \prod_{k=0}^{\infty} \left(1 - f^*(\widehat{f}^{(k)}(u))g^*(\widehat{g}^{(k)}(v))\right),
\]
if  $f(u)g(v)<uv$ (in this case the product always converges), or zero if  $f(u)g(v)\geqslant uv$.
\end{theorem}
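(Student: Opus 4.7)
The theorem is essentially a bookkeeping consequence of Lemma \ref{corners}. Its NE, NW, and SE formulas match parts \textbf{(a)}, \textbf{(b)}, and \textbf{(c)} of that lemma verbatim, so the only place that genuinely needs new work is the SW corner, where the lemma was proved only for the starting copula $\Pi$. The plan is to isolate the dependence of the $n$-th iterate on the starting copula by comparing the general case to the case $C=\Pi$ through a single multiplicative factorization.

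The crucial algebraic observation is that $\dot{\Pi}=\Pi$, which makes the factor $\dot{\Pi}(\widehat{f}^{(n)},\widehat{g}^{(n)})/(\widehat{f}^{(n)}\widehat{g}^{(n)})$ in \eqref{inverse_maxmin_n} collapse to $1$; dividing \eqref{inverse_maxmin_n} by the corresponding identity for $\Pi$ therefore yields
\[
\dot{T_{f,g}}^{(n)}(\dot{C})(u,v) \;=\; \frac{\dot{C}(\widehat{f}^{(n)}(u),\widehat{g}^{(n)}(v))}{\widehat{f}^{(n)}(u)\,\widehat{g}^{(n)}(v)}\cdot\dot{T_{f,g}}^{(n)}(\dot{\Pi})(u,v)
\]
for every $u,v>0$ (the denominator is positive since $\widehat{f}(u)\geqslant u$ and $\widehat{g}(v)\geqslant v$). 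This identity is the backbone of the whole argument: Lemma \ref{limit} handles the first factor, and Lemma \ref{corners} applied to the starting copula $\Pi$ handles the second.

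On the SW corner ($u<\alpha$, $v<\beta$), Lemma \ref{limit} gives $\widehat{f}^{(n)}(u)\to\alpha$ and $\widehat{g}^{(n)}(v)\to\beta$, so by Lipschitz continuity of the copula $\dot{C}$ the first factor tends to $\dot{C}(\alpha,\beta)/(\alpha\beta)$; Lemma \ref{corners}\textbf{(d)} provides the limit of the second factor, either $uv\prod_{k=0}^\infty\bigl(1-f^*(\widehat{f}^{(k)}(u))g^*(\widehat{g}^{(k)}(v))\bigr)$ when $f(u)g(v)<uv$, or $0$ when $f(u)g(v)\geqslant uv$ (in the latter case already the $k=0$ factor $\max\{0,1-f^*(u)g^*(v)\}$ vanishes, so the iterate is identically zero from $n=1$ onward). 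Multiplying the two limits yields the announced SW formula.

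For the other three corners the same factorization serves as an internal check: once an argument stabilizes to a fixed point of $\widehat{f}$ (or $\widehat{g}$), the corresponding slices of the product are trivial, and Lemma \ref{corners}\textbf{(a)}--\textbf{(c)} applied to $\dot{\Pi}$ give $\dot{T_{f,g}}^{(n)}(\dot{\Pi})\to uv$ on NE, NW, and SE; combined with the limit of the first factor this recovers the listed formulas. The real analytic content---the Fr\'{e}chet--Hoeffding argument that forces the infinite product to converge to a positive limit on the SW corner---was already the main obstacle and has been absorbed into Lemma \ref{corners}\textbf{(d)}, so at the level of the theorem no further substantive difficulty remains.
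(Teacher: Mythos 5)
Your proposal is correct and follows essentially the same route as the paper: the NE, NW, and SE corners are read off from Lemma \ref{corners}\textbf{(a)}--\textbf{(c)}, and the SW corner is handled by the very factorization $\dot{T_{f,g}}^{(n)}(\dot{C}) = \bigl(\dot{C}(\widehat{f}^{(n)},\widehat{g}^{(n)})/(\widehat{f}^{(n)}\widehat{g}^{(n)})\bigr)\,\dot{T_{f,g}}^{(n)}(\dot{\Pi})$ that the paper uses, combined with Lemma \ref{limit} and Lemma \ref{corners}\textbf{(d)}. Your added remarks (Lipschitz continuity for the first factor, the $k=0$ factor vanishing when $f(u)g(v)\geqslant uv$) only make explicit what the paper leaves implicit.
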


\begin{proof}
  The form of the limit on the NW, NE, respectively SE corners is given by Lemma \ref{corners}\textbf{(a)}, \textbf{(b)} respectively \textbf{(c)}. To get the desired result on the SW corner, write the right-hand side of \eqref{inverse_maxmin_n} as
\[
\frac{\dot{C}(\widehat{f}^{(n)}(u), \widehat{g}^{(n)}(v))} {\widehat{f}^{(n)}(u)\widehat{g}^{(n)}(v)} \dot{T_{f,g}}^{(n)}(\dot{\Pi})(u,v)
\]
and the theorem follows by Lemma \ref{limit} and Lemma \ref{corners}\textbf{(d)}.
\end{proof}

\begin{corollary}\label{referee}
   Assume that $\alpha=\beta=1$. Then
$${\overline{C}_{f,g}}^\sigma={\overline{\Pi}_{f,g}}^\sigma$$
for any copula $C$.
\end{corollary}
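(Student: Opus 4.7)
The plan is to apply Theorem~\ref{thm:main} region by region on the partition of the unit square by the lines $u=\alpha$ and $v=\beta$, and to exploit the fact that, under the hypothesis $\alpha=\beta=1$, the starting copula $\dot C$ enters the limit only through the single scalar value $\dot C(\alpha,\beta)=\dot C(1,1)$. If this value turns out to be universal across all copulas, then ${\overline{C}_{f,g}}^\sigma$ must be independent of the choice of $C$, and hence equal to ${\overline{\Pi}_{f,g}}^\sigma$.

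The first step is to verify this key boundary identity. Using the definition $\dot C(u,v)=u-C(u,1-v)$ together with the standard copula boundary property $C(1,0)=0$, one immediately gets $\dot C(1,1)=1-C(1,0)=1$ for every copula $C$. In particular $\dot C(\alpha,\beta)=\dot \Pi(\alpha,\beta)=1$ whenever $\alpha=\beta=1$.

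Next I would traverse the four ``corners'' of Theorem~\ref{thm:main}. Under $\alpha=\beta=1$ the NE corner collapses to the single point $(1,1)$, the NW and SE corners reduce to the boundary segments $u=1$ and $v=1$, and the SW corner fills the entire unit square. On the NE, NW and SE regions the formulas of Theorem~\ref{thm:main} involve $\dot C$ only through the value $\dot C(1,1)=1$, yielding the standard copula boundary values $1$, $v$ and $u$ respectively, which trivially agree with those for $\dot\Pi$. On the SW region Theorem~\ref{thm:main} gives
\[
{\overline{C}_{f,g}}^\sigma(u,v)=\begin{cases} uv\,\dfrac{\dot C(1,1)}{1\cdot 1}\displaystyle\prod_{k=0}^{\infty}\bigl(1-f^*(\widehat{f}^{(k)}(u))\,g^*(\widehat{g}^{(k)}(v))\bigr), & f(u)g(v)<uv, \\ 0, & f(u)g(v)\geqslant uv, \end{cases}
\]
and after substituting $\dot C(1,1)=1$ this expression no longer involves $C$ and coincides term for term with the formula for ${\overline{\Pi}_{f,g}}^\sigma$ provided by Lemma~\ref{corners}\textbf{(d)}.

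There is really no obstacle to overcome: the corollary is a direct reading of Theorem~\ref{thm:main}, once one notes that the split condition $f(u)g(v)\lessgtr uv$ in the SW case does not involve $C$ and that the only $C$-dependent scalar in the limit, $\dot C(\alpha,\beta)$, equals $1$ universally when $\alpha=\beta=1$. The convergence of the infinite product in the nontrivial case is already provided by Lemma~\ref{corners}\textbf{(d)} and need not be re-established here.
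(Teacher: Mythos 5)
Your proof is correct and follows essentially the same route the paper intends: the corollary is stated without proof precisely because it is a direct reading of Theorem~\ref{thm:main} with $\alpha=\beta=1$, where the four corners degenerate as you describe and the only $C$-dependence, the factor $\dot{C}(\alpha,\beta)/(\alpha\beta)=\dot{C}(1,1)=1$, is universal. Your explicit verification of $\dot C(1,1)=1$ and of the boundary segments is a slightly more careful write-up of the same observation.
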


The scatterplots presented on Figure 2 
are giving us some feeling of the convergence of the iteration we are studying in this section. Each image in the first row shows the scatterplot of the copula obtained when transformation $\dot{T}^{(1)}$ is applied to a starting copula $C$ and the image below shows the one obtained when $\dot{T}^{(2)}$ is applied to the same starting copula. The starting copulas are respectively $\Pi,M$, and $W$. The choice of generating functions in all these examples are $f(u)= \displaystyle \frac{1}{2}(1-u)$ and $g(v)=\displaystyle \frac{1}{10}(1-v)$. In all the cases the dots are scattered above a hyperbola. In the first column only the density of the dots varies with iteration. In the second column a singular line appeares that converges to the top of the square with iterations. In the third column a singular line appears that cuts out a piece of the area only on the plot of $\dot{T}^{(1)}$, while later the whole area above the hyperbola is dotted.

\begin{figure}[h!]\label{fig:slika2}
            \includegraphics[width=0.32\textwidth]{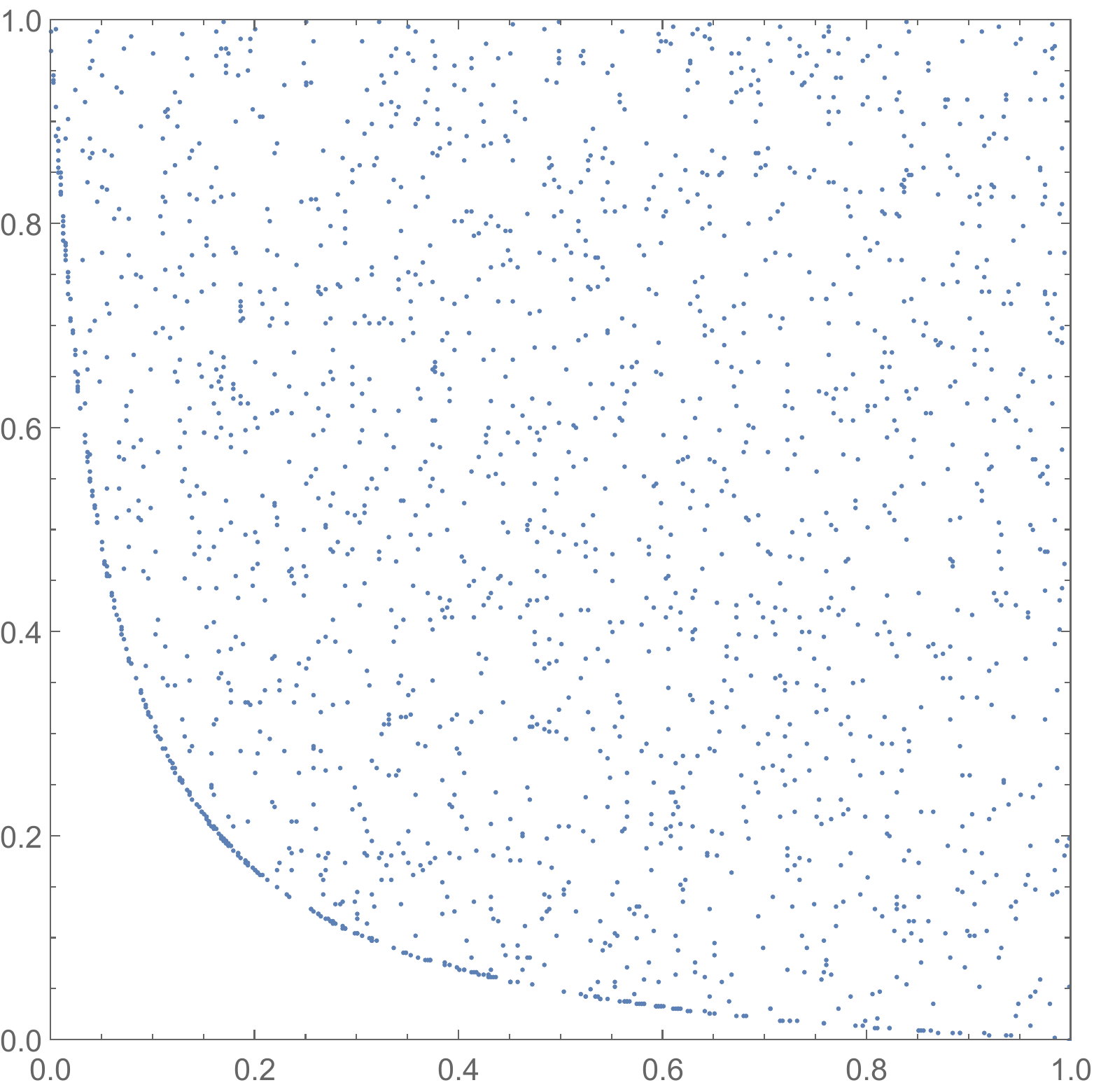} \hfil \includegraphics[width=0.32\textwidth]{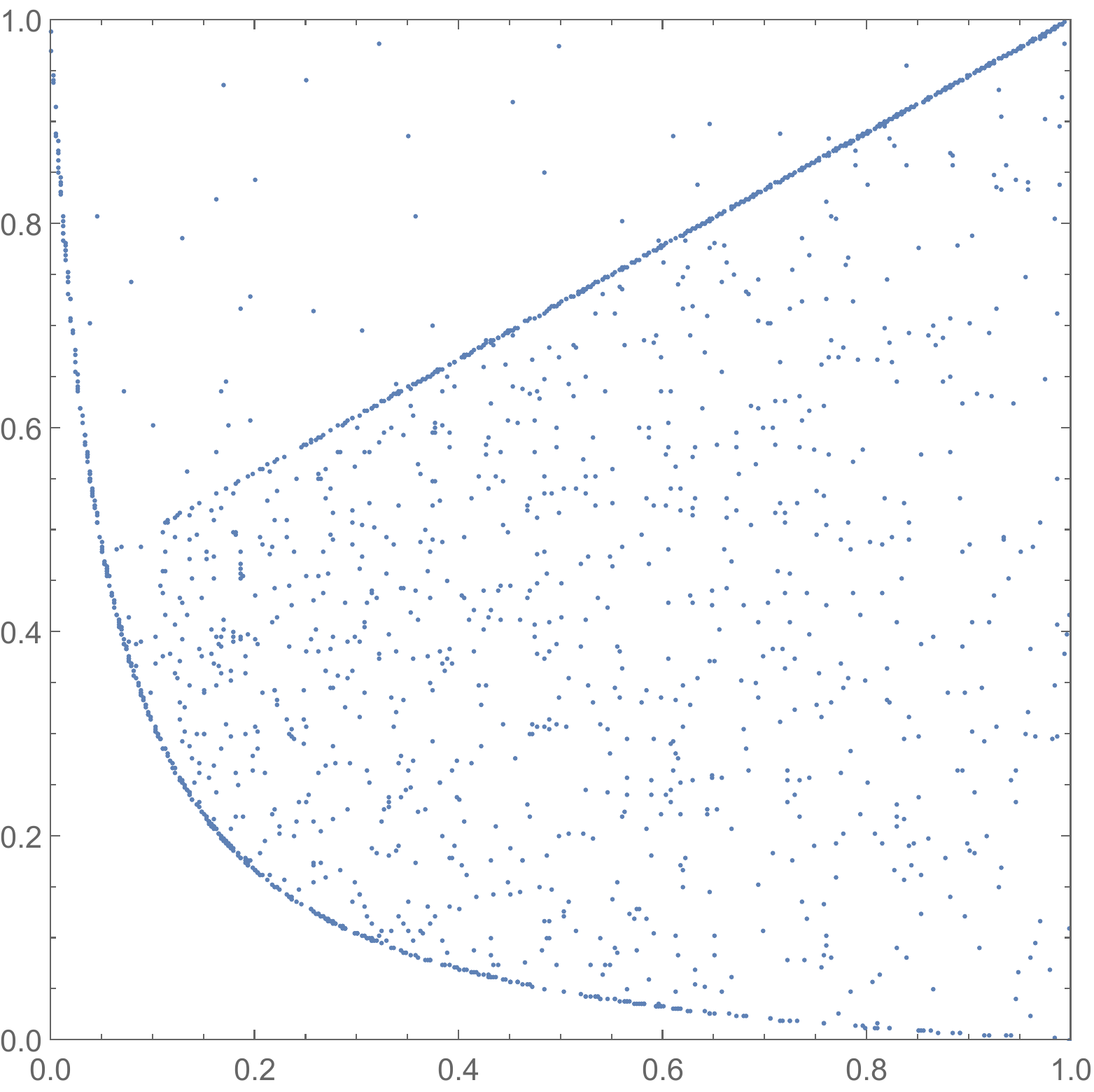} \hfil \includegraphics[width=0.32\textwidth]{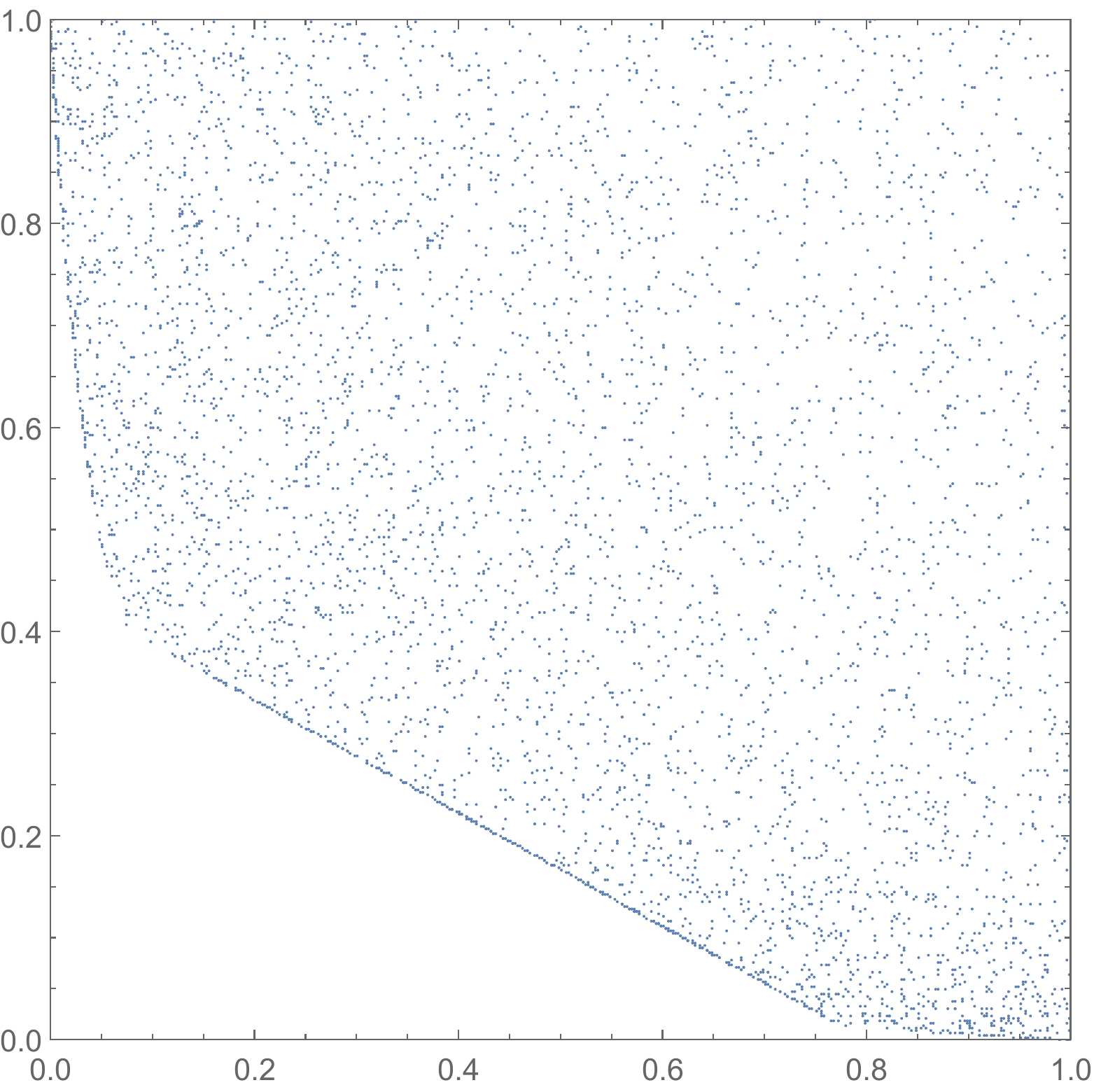}   \includegraphics[width=0.32\textwidth]{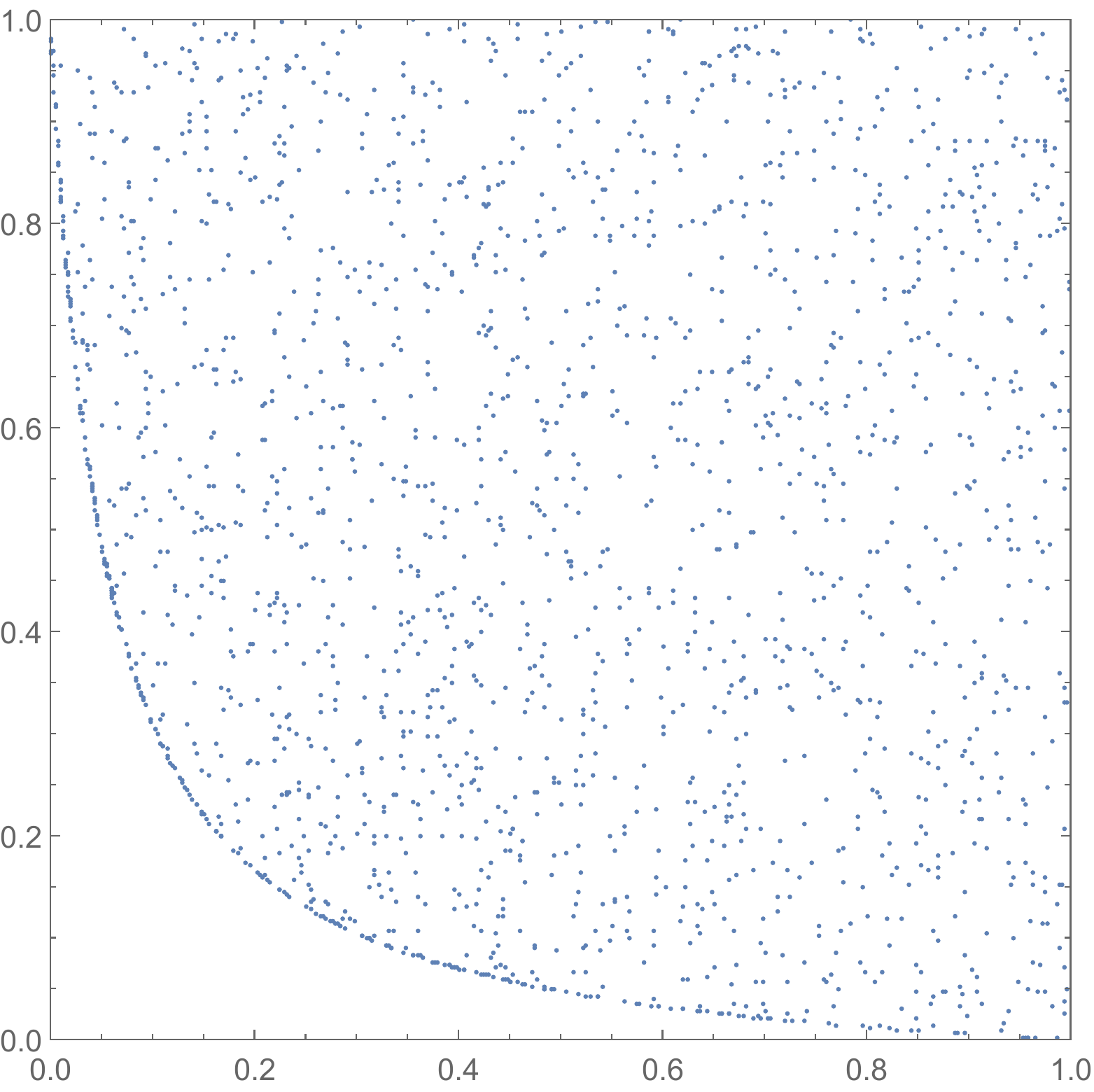} \hfil \includegraphics[width=0.32\textwidth]{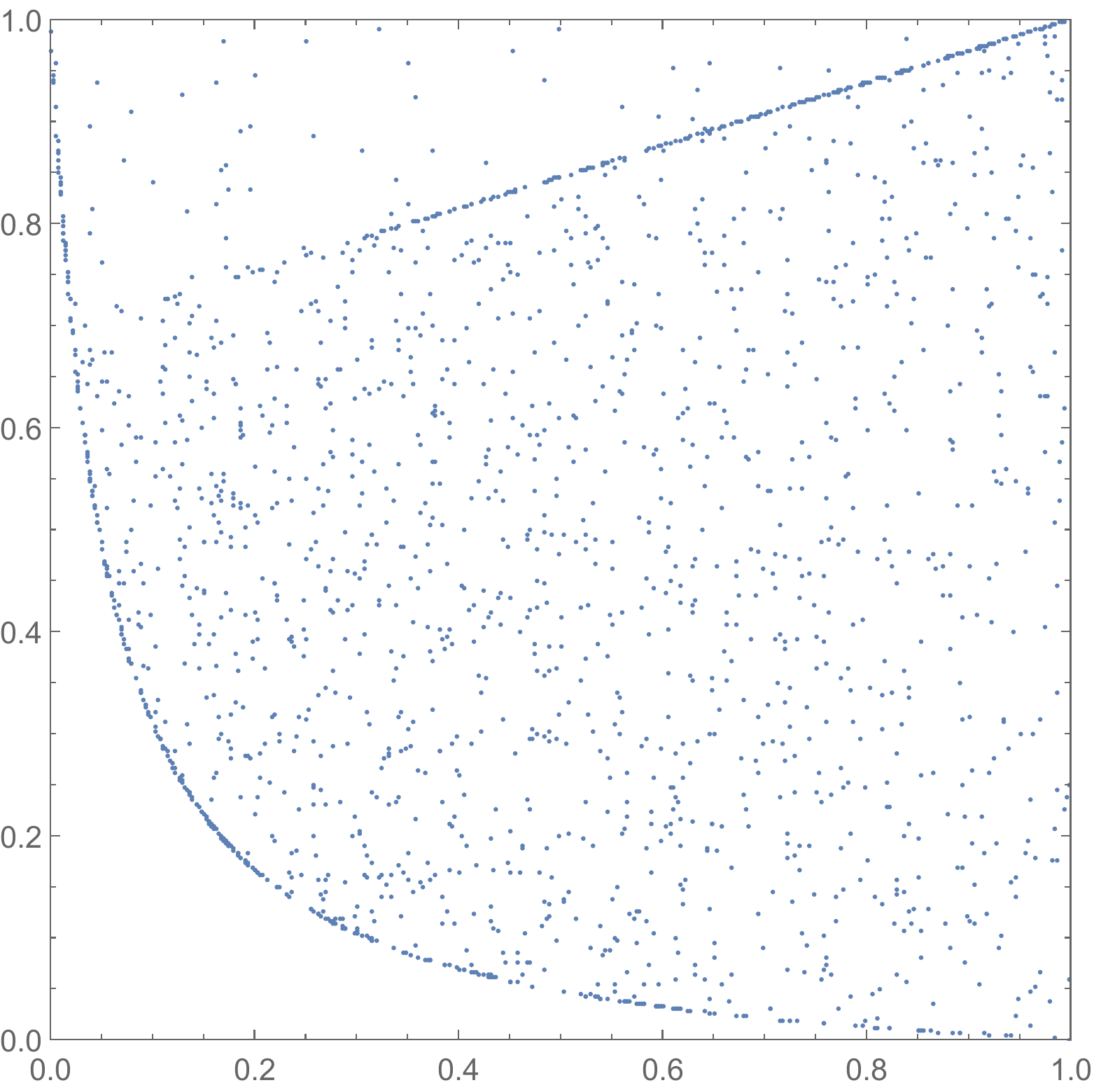} \hfil  \includegraphics[width=0.32\textwidth]{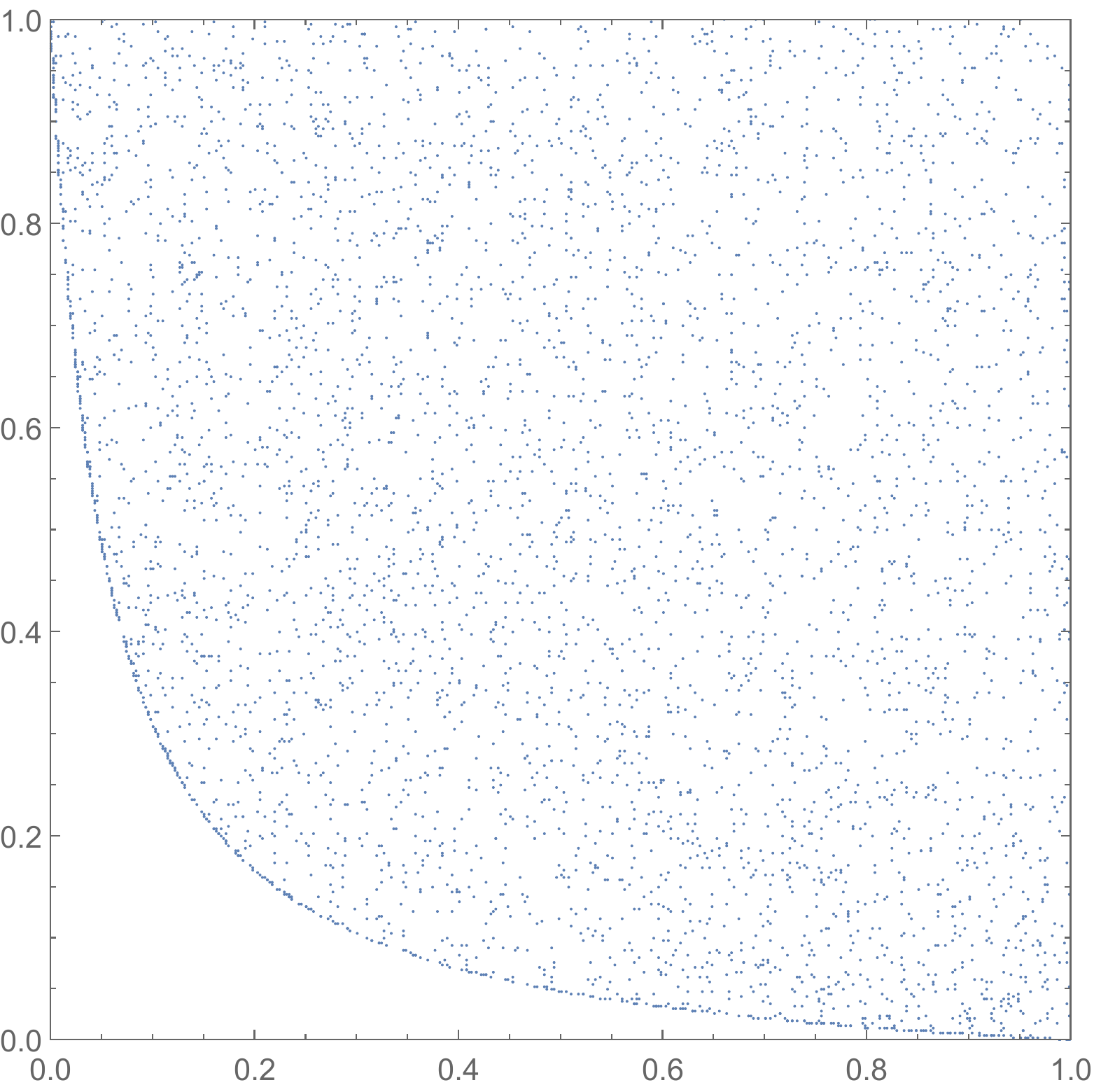}
            \caption{ Transformation $\dot{T}^{(1)}$ and $\dot{T}^{(2)}$ on copulas $\dot{\Pi},\dot{M}$, and $\dot{W}$, respectively. }
\end{figure}

\begin{figure}[h!]\label{fig:slika2}
            \includegraphics[width=0.32\textwidth]{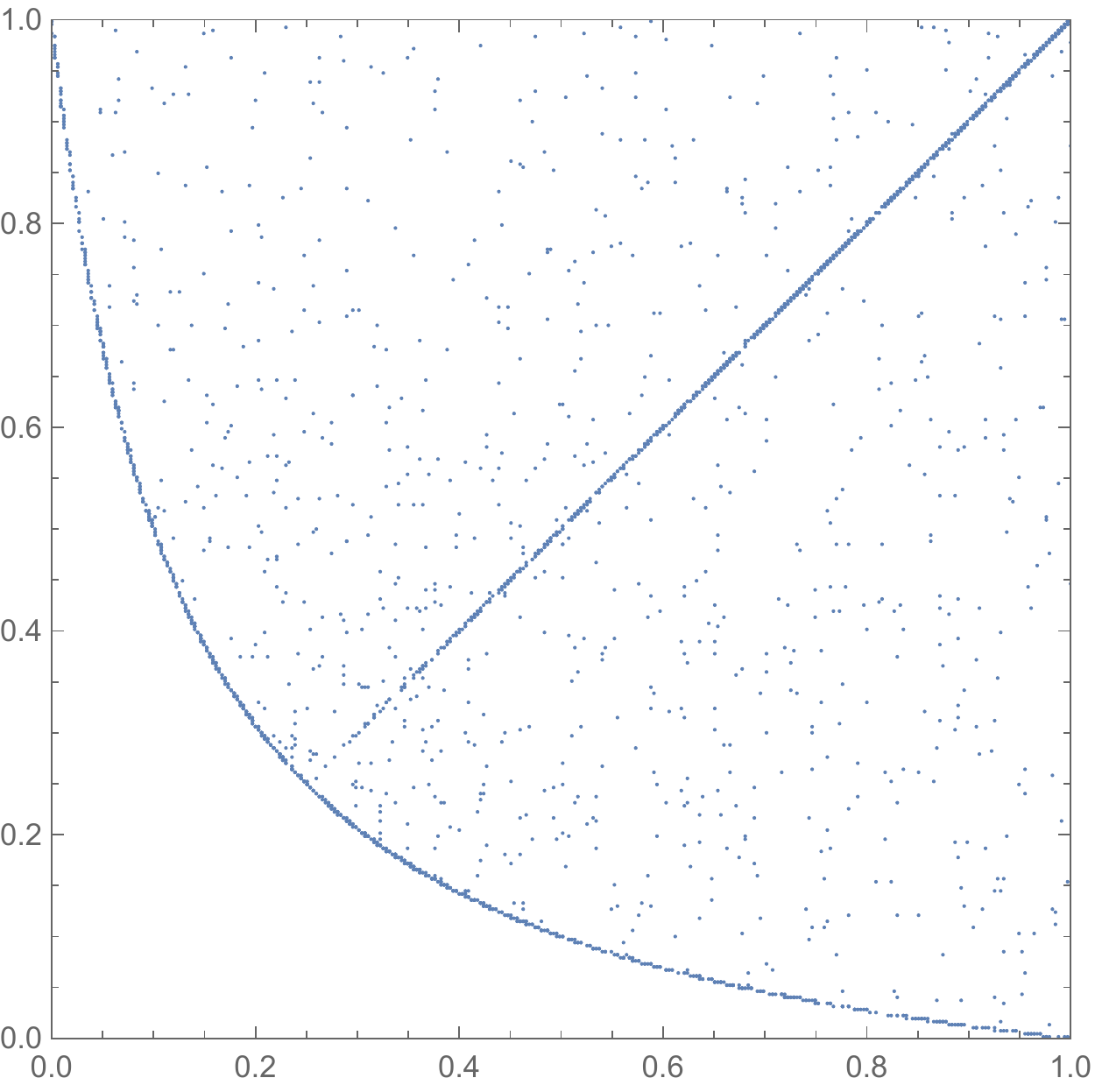} \hfil \includegraphics[width=0.32\textwidth]{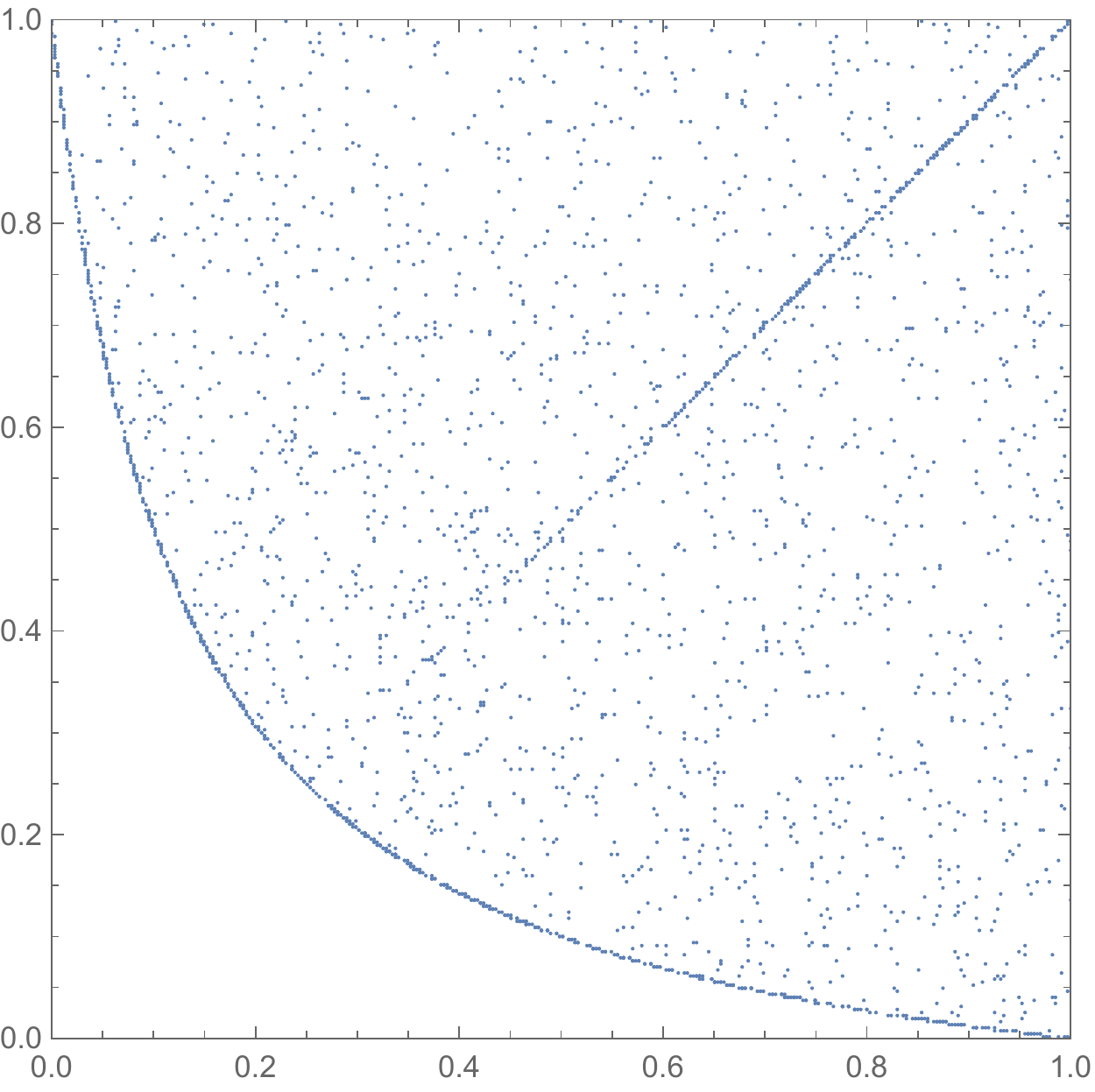} \hfil \includegraphics[width=0.32\textwidth]{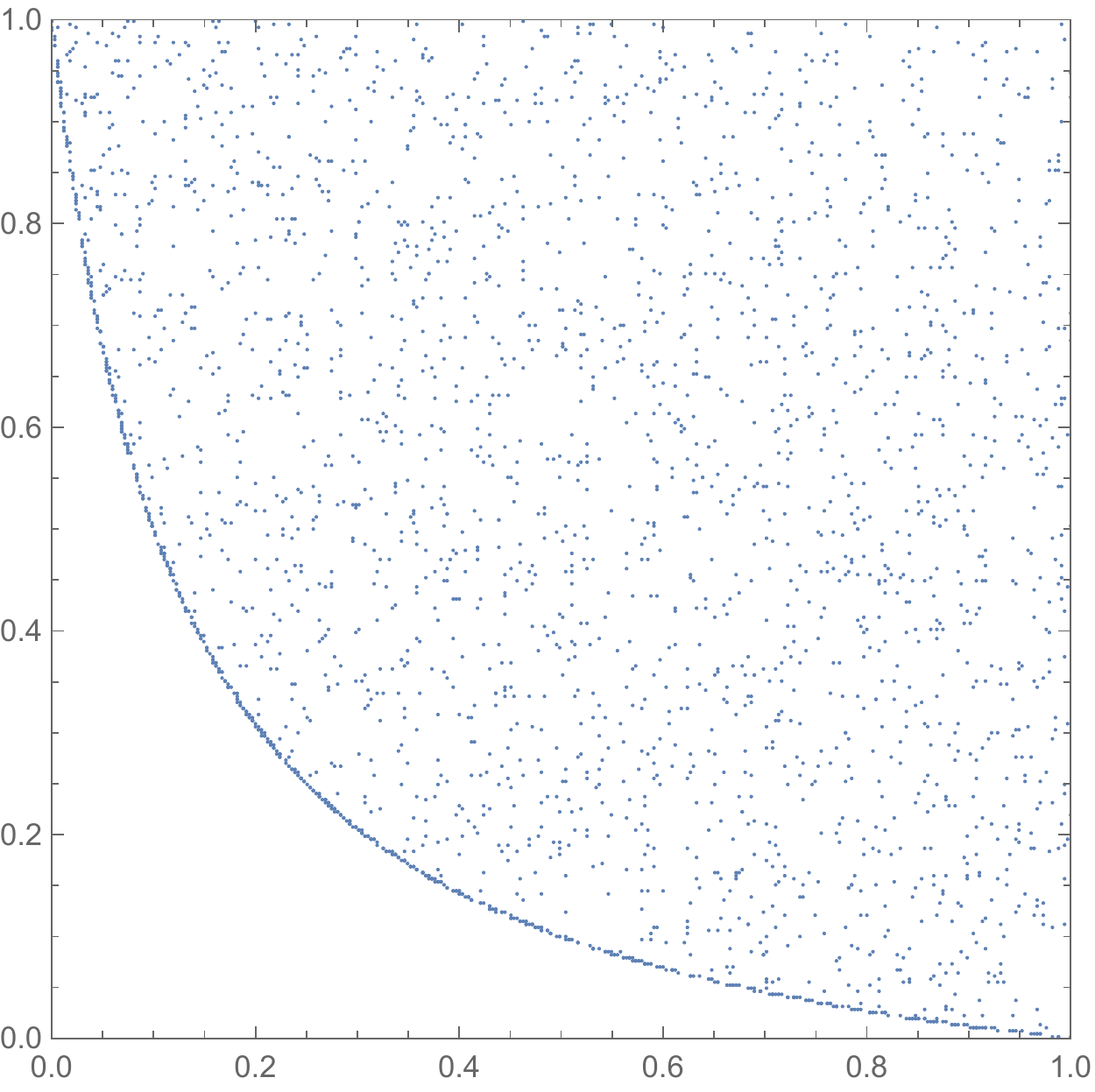} \\ \includegraphics[width=0.32\textwidth]{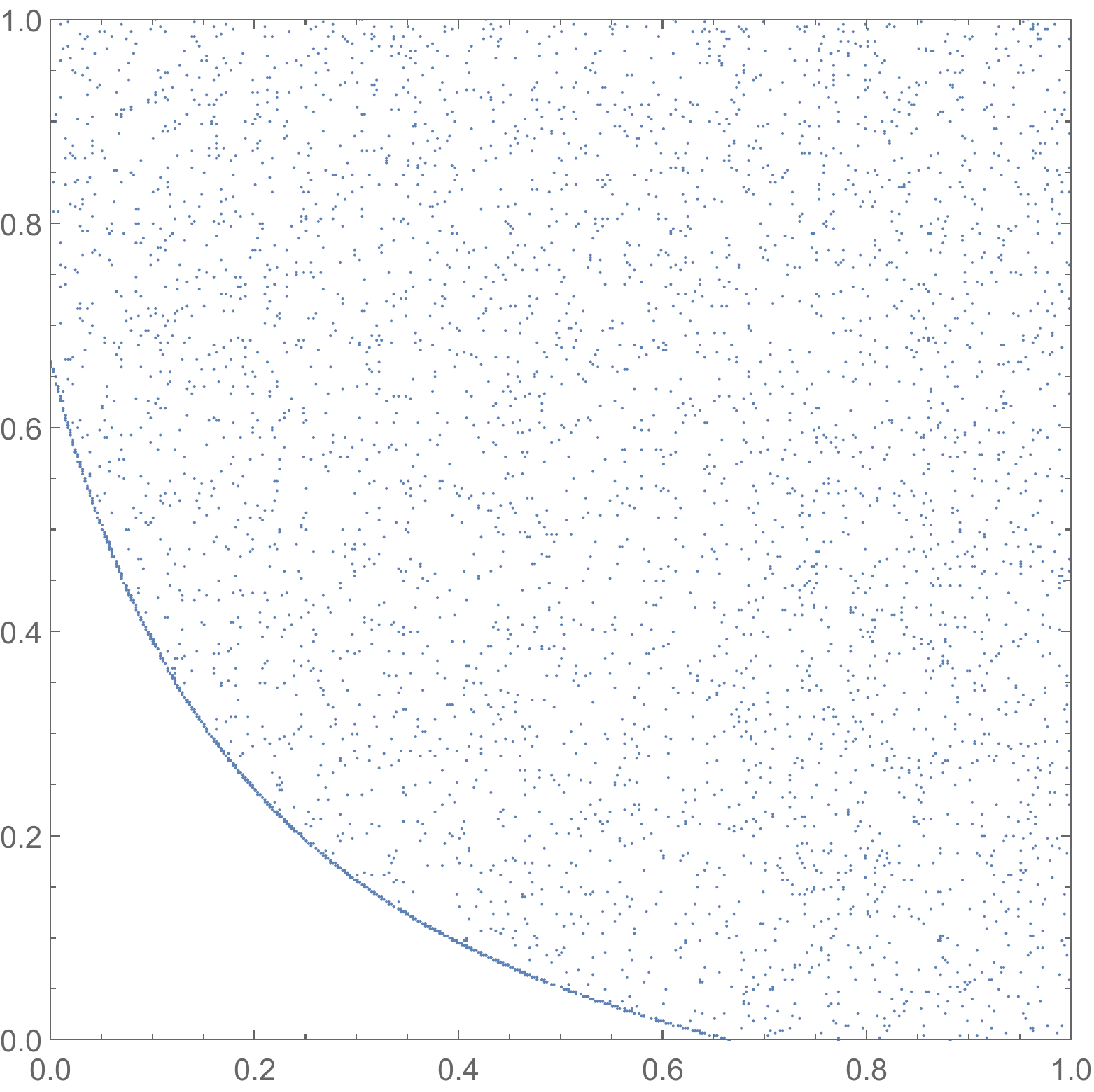} \hfil \includegraphics[width=0.32\textwidth]{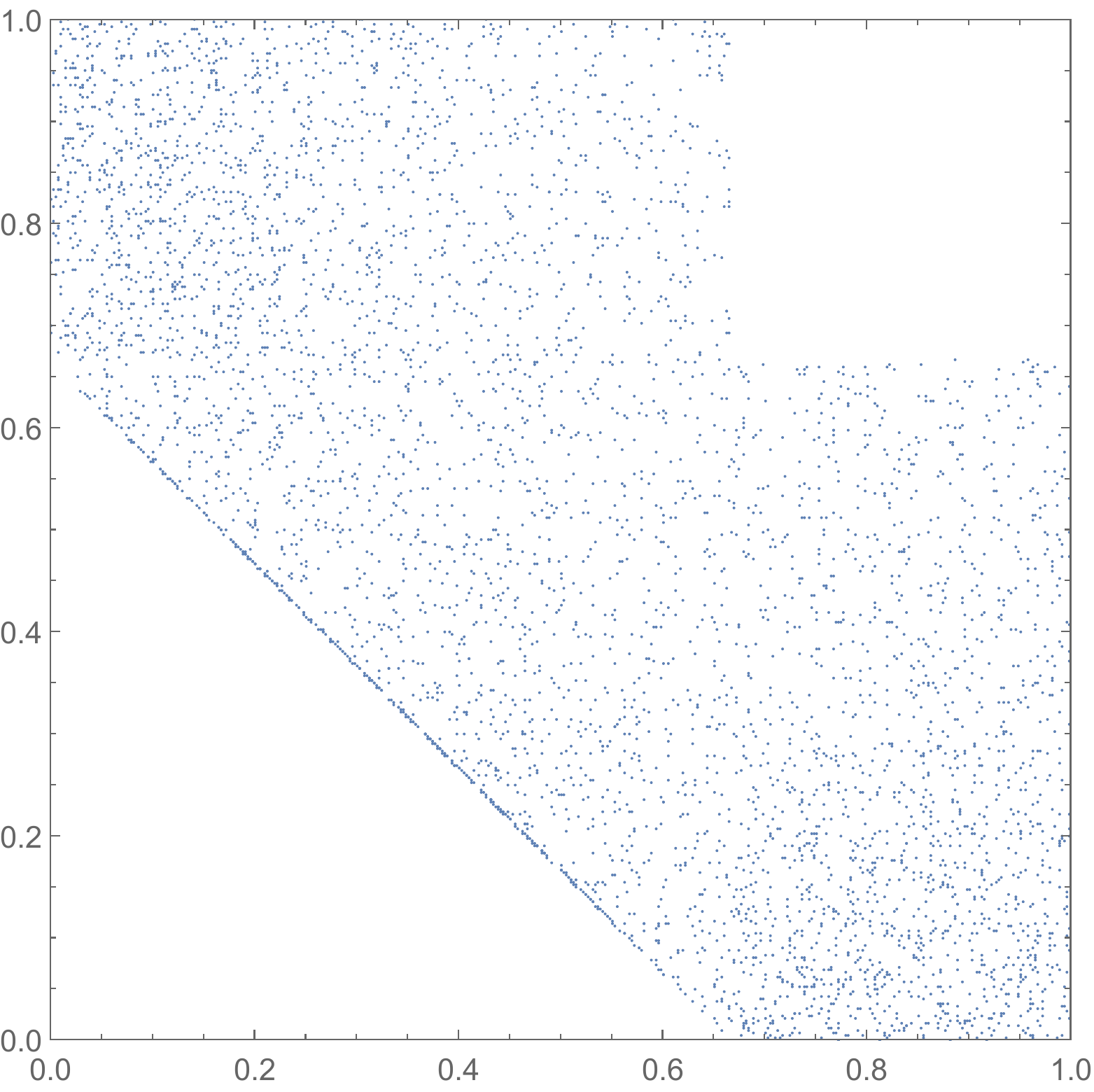} \hfil  \includegraphics[width=0.32\textwidth]{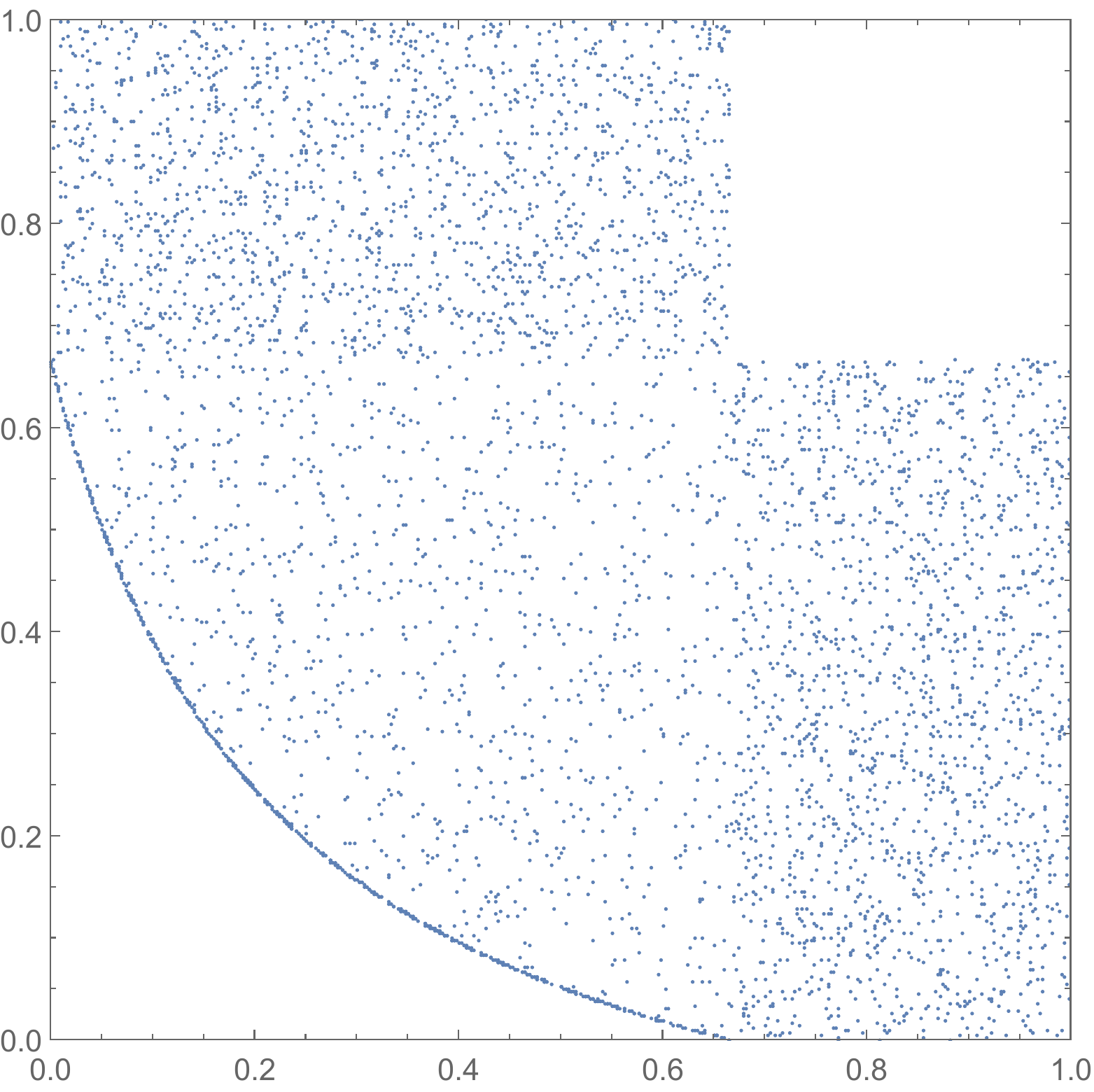} \\ \hfil
\includegraphics[width=0.32\textwidth]{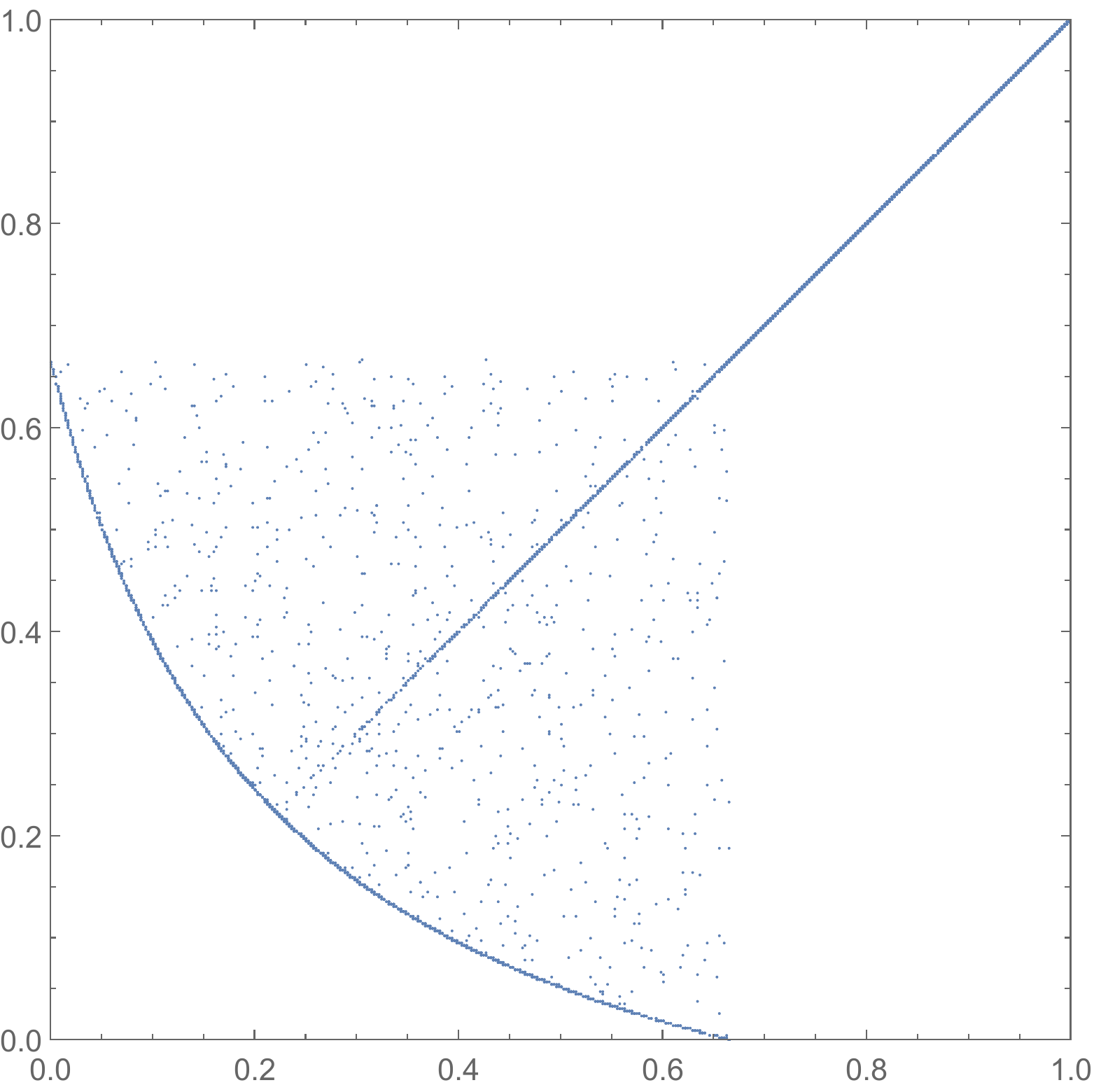} \hfil \includegraphics[width=0.32\textwidth]{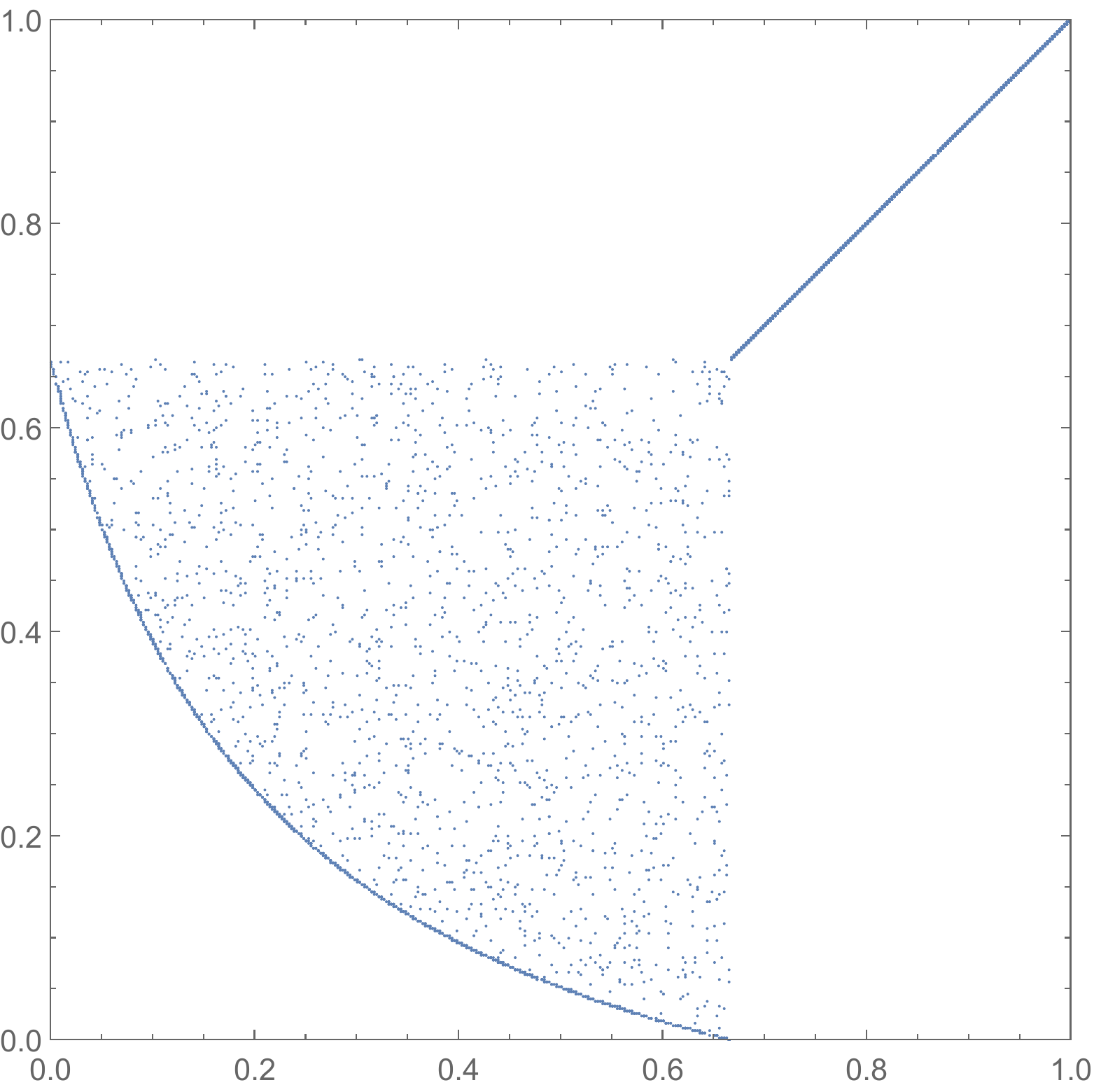} \hfil
            \caption{ Various transformations $\dot{T}^{(n)}$ on copulas $\dot{\Pi},\dot{M}$, and $\dot{W}$ as described in the text. }
\end{figure}

In Figure 3 we present some more interesting scatterplots. We give here the description of the images row by row going from the top to the bottom and within the row from the left to the right. Generating functions for the first three images are $\displaystyle f(u)=\frac{1}{3} (1-u)$ and $\displaystyle g(v)=\frac{1}{3} (1-v)$, while the copula equals $C=W$, and respectively $n=1$, $n=3$, $n=\infty$. For the rest of the images the first generating function is $\displaystyle f(u)=\frac{1}{2} \max \{0, \frac{2}{3}-u \}$ and $g(v)$ is as in the previous case. The first image in the second row corresponds to the copula $C=\Pi$ and $n=1$. For larger values of $n$ the image is very similar in this case. The next two images correspond to the case $C=M$, and $n=1$, respectively $n=\infty$. The last two images correspond to $C=W$, and $n=1$, respectively $n=\infty$.

\begin{figure}[h!]\label{fig:slika2}
            \includegraphics[width=0.32\textwidth]{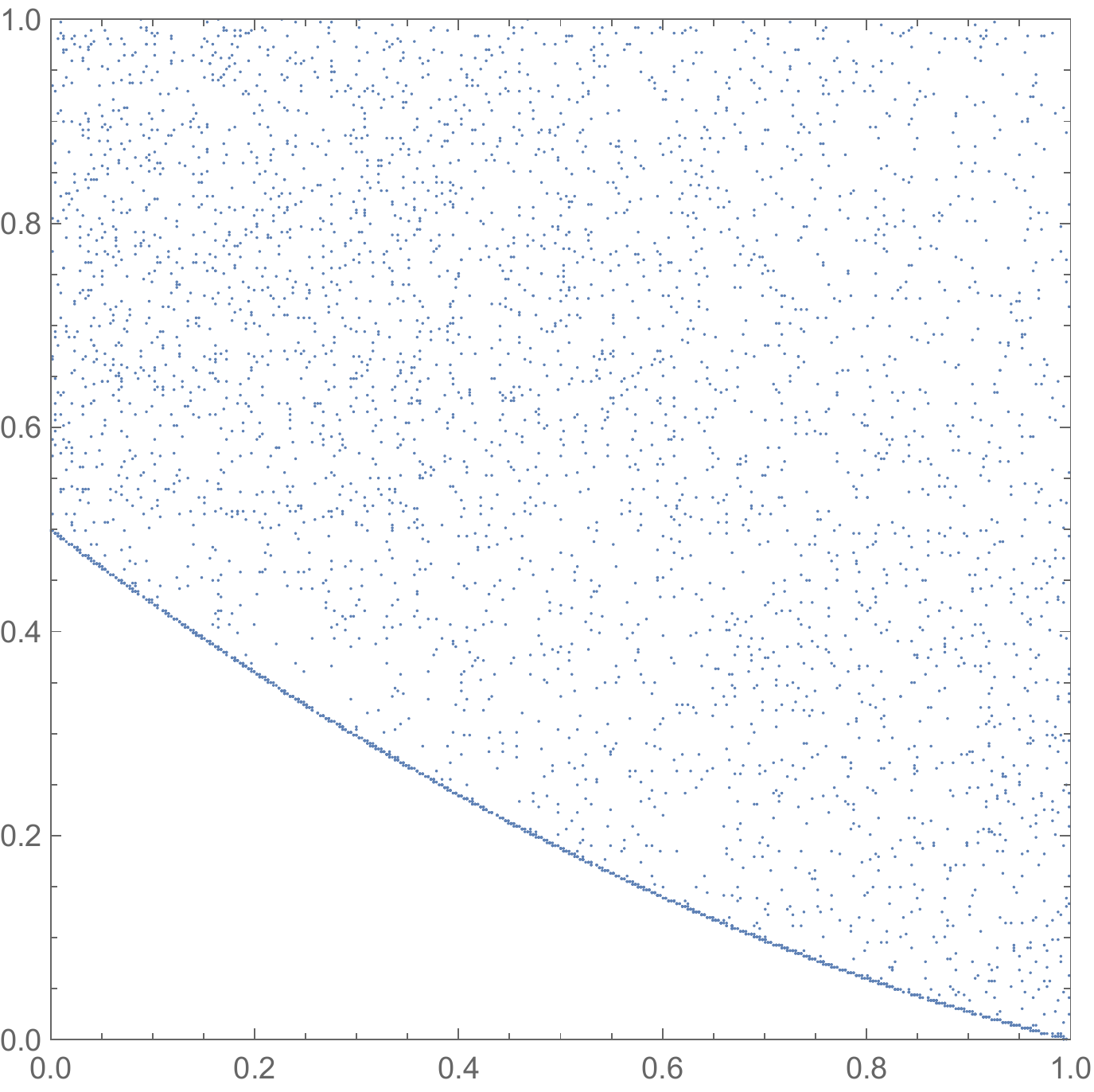} \hfil \includegraphics[width=0.32\textwidth]{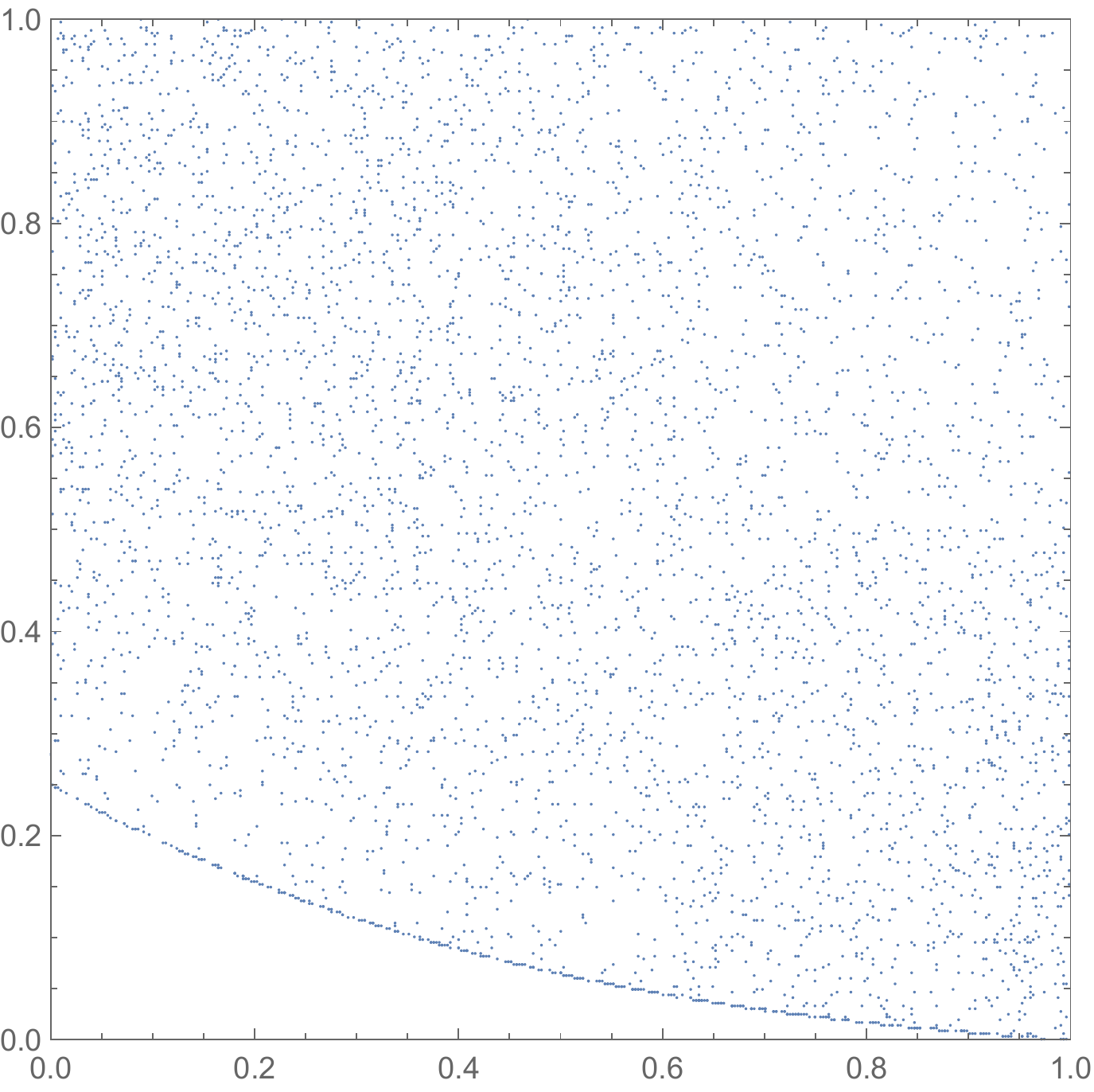} \hfil \includegraphics[width=0.32\textwidth]{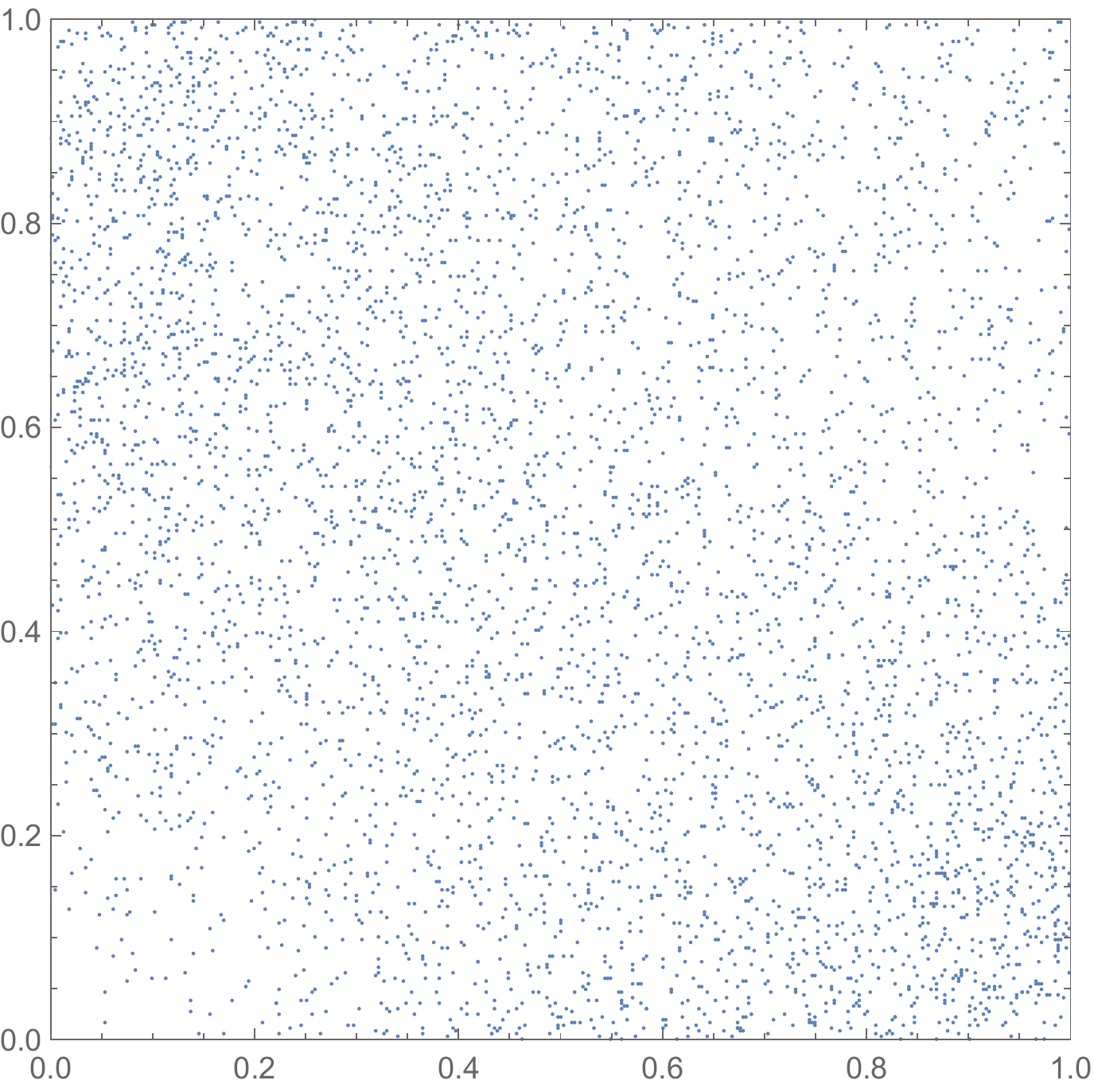}
            \caption{ Evolution of the singular part. }
\end{figure}

On Figure 4 we consider evolution of the singular part for the example of an RMM copula given by $f(u)=\min\{u,1-u\}, g(v)=v(1-v)$, and $C=M$. The resulting copulas for $n=1,2$ are shown on the first two scatterplots of Figure 4 and are clearly singular. When $n$ approaches infinity, the singularity is pushed to the lower edge of the square and in the limit it disappears. This is seen on the third scatterplot of this figure. The limiting copula is equal to ${\overline{\Pi}_{f,g}}^\sigma$ as explained in Corollary \ref{referee}.

Let us conclude this section by translating Theorem \ref{thm:main} in terms of maxmin copulas. The transformation $\dot{C}\mapsto \dot{T_{f,g}}(\dot{C})$ is obtained from the transformation $C\mapsto T_{\phi,\psi}(C)$ by applying the flip on either side of the transformation. Since the flip is an involutory operation (cf.\ \cite[p.\ 33]{DuSe}), it follows inductively that the transformation $\dot{C} \mapsto \dot{T_{f,g}}^{(n)} (\dot{C})$ is obtained from the transformation $C\mapsto {T_{\phi,\psi}}^{(n)}(C)$ (here we adjust the notation of \cite{DuOmOrRu} in an obvious way) by applying the flip on both sides of the transformation. So, we can compute the formula for ${T_{\phi, \psi}}^{(n)}(C)$ from the formula for $\dot{T_{f,g}}^{(n)}(\dot{C})$ by \textbf{(a)} applying the flip on both sides of $\eqref{inverse_maxmin_n}$ and \textbf{(b)} replacing the generators $f,g$ and their auxiliary functions by the generators $\phi,\psi$ and their auxiliaries using relations \eqref{inverse_generators}. When replacing the iterates a simple inductive argument reveals
\begin{equation}\label{eq:orig_generators}
  \widehat{f}^{(n)}(u)=\phi^{(n)}(u)\ \ \mbox{and}\ \ \widehat{g}^{(n)}(v)=1 - \psi^{(n)}(1 - v),
\end{equation}
which implies
  \begin{align*}
    \phi^{(\infty)}(u):=\lim_{n\rightarrow\infty}\phi^{(n)}(u) & = \left\{
    \begin{array}{ll}
        0, & \hbox{$u=0$;} \\
        \alpha, & \hbox{$0<u<\alpha$;} \\
        u, & \hbox{$u\geqslant\alpha$.}
        \end{array}
    \right. \\
    \psi^{(\infty)}(v):=\lim_{n\rightarrow\infty}\psi^{(n)}(v) & = \left\{
    \begin{array}{ll}
        v, & \hbox{$v\leqslant1-\beta$;} \\
        1-\beta, & \hbox{$1-\beta<v<1$;} \\
        1, & \hbox{$v=1$.}
        \end{array}
    \right.\\
  \end{align*}

Here, $\alpha$ is the smallest $u\in[0,1]$ such that $\phi$ is equal to identity function on $[u,1]$, and $1-\beta$ is the greatest $v\in[0,1]$ such that $\psi$ is equal to identity function on $[0,v]$ (cf.\ \cite[Proposition 2.2]{DuOmOrRu}). We divide again the square $[0,1]^2$ into four corners: The limit function in $u$ is equal to a constant for $u\leqslant\alpha$ and to the identity otherwise; the limit function in $v$ is equal to the identity for $v\leqslant1-\beta$ and to a constant otherwise.

In order to transform equation \eqref{inverse_maxmin_n}, we need to replace functions $f^*,g^*$ using the functions $\phi^*,\psi_*$ introduced in \cite[p.\ 156]{DuOmOrRu}. It is not hard to find the relation between $f^*$ and $\phi^*$, and between $g^*$ and $\psi_*$. We need formulas \[
\begin{split}
   \phi^*(u) &=\frac{u}{\widehat{f}(u)} = \frac{1}{f^*(u)+1},\ \ \mbox{and} \\
   \psi_*(1-v)  &  =\frac{1-v-\psi(1-v)}{1-\psi(1-v)}= \frac{g(v)}{\widehat{g}(v)}=\frac{g^*(v)}{g^*(v)+1},
\end{split}
\]
so that
\begin{equation}\label{orig_stars}
  f^*(u)=\frac{1-\phi^*(u)}{\phi^*(u)}\ \ \mbox{and}\ \ g^*(v)= \frac{\psi_*(1-v)}{1-\psi_*(1-v)}
\end{equation}

Using formulas \eqref{eq:orig_generators} and \eqref{orig_stars} we can translate equation \eqref{inverse_maxmin_n} into:

\begin{align}\label{maxmin_n}
\begin{split}
   {T_{\phi,\psi}}^{(n)}({C})(u,v) := u- u(1-v) \frac{\phi^{(n)}(u)- {C}(\phi^{(n)}(u), \psi^{(n)}(v))} {\phi^{(n)}(u)(1-\psi^{(n)}(v))}\times \\
   \prod_{k=0}^{n-1} \max\left\{0,1-\frac{1- \phi^*(\phi^{(k)}(u))} {\phi^*(\phi^{(k)}(u))} \frac{\psi_*(\psi^{(k)}(v))} {1-\psi_*(\psi^{(k)}(v))}\right\}.
\end{split}
\end{align}

\begin{theorem}\label{thm:maxmin}
  The limit of the  sequence \eqref{maxmin_n} always exists and equals
\begin{description}
  \setlength\itemsep{.5em}
  \item[(a)] ${C}(u,v)$ on the SE corner;
  \item[(b)] $\displaystyle \frac{u}{\alpha}{C}(\alpha,v)$ on the SW corner;
  \item[(c)] $\displaystyle u-\frac{1-v}{\beta}(u-{C}(u,1-\beta)),$ on the NE corner;
  \item[(d)] either
      \[
          u- \frac{u(1-v)}{\alpha\beta}(\alpha-{C}(\alpha,1-\beta))
          \prod_{k=0}^{\infty} \left(1 - \frac{1- \phi^*(\phi^{(k)}(u))} {\phi^*(\phi^{(k)}(u))} \frac{\psi_*(\psi^{(k)}(v))} {1-\psi_*(\psi^{(k)}(v))}\right)
      \]
        if $\phi^*(u)>\psi_*(v)$ (in this case the product always converges), or $u$ if $\phi^*(u)\leqslant\psi_*(v)$, on the NW corner.
\end{description}
\end{theorem}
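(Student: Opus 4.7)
The plan is to reduce the theorem to Theorem \ref{thm:main} via the flip operation. First I would formalize the relation between the two iteration schemes. By construction, $\dot T_{f,g}(\dot C)$ is obtained from $T_{\phi,\psi}(C)$ by applying the flip in the second variable on both sides, with the generators related by \eqref{inverse_generators}. Since the flip is an involution (\cite[p.\ 33]{DuSe}), a routine induction shows
\[
    T_{\phi,\psi}^{(n)}(C)(u,v) = u - \dot T_{f,g}^{(n)}(\dot C)(u, 1-v).
\]
Consequently, taking the limit as $n \to \infty$ reduces to evaluating the right-hand side limit, which is governed by Theorem \ref{thm:main}, and then subtracting from $u$.

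Second, I would carefully match the four corners. The RMM corners are cut out by $u=\alpha$ and $v=\beta$, and the substitution $v \mapsto 1-v$ sends the line $v=\beta$ to $v=1-\beta$ while reversing the north-south orientation. Hence the RMM NE, SE, NW, SW corners correspond respectively to the MM SE, NE, SW, NW corners. The three easy cases (a), (b), (c) follow directly: using $\dot C(x,y) = x - C(x, 1-y)$, Theorem \ref{thm:main} gives, for instance on the MM SE corner, $\dot T_{f,g}^{(n)}(\dot C)(u, 1-v) \to \dot C(u, 1-v) = u - C(u,v)$, so that $T_{\phi,\psi}^{(n)}(C)(u,v) \to C(u,v)$. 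Cases (b) and (c) are analogous after noting $\dot C(\alpha, 1-v) = \alpha - C(\alpha, v)$ and $\dot C(u, \beta) = u - C(u, 1-\beta)$.

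The substantive case is (d). Here the main algebraic task is to translate the RMM dichotomy $f(u)g(1-v) \lessgtr u(1-v)$ into the MM dichotomy $\phi^*(u) \gtrless \psi_*(v)$. Using $f(u) = \phi(u) - u$ and $g(1-v) = v - \psi(v)$ from \eqref{inverse_generators}, together with $\phi^*(u) = u/\phi(u)$, so that $1 - \phi^*(u) = (\phi(u)-u)/\phi(u)$, and $\psi_*(v) = (v - \psi(v))/(1-\psi(v))$, so that $1 - \psi_*(v) = (1-v)/(1-\psi(v))$, the inequality
\[
    (\phi(u)-u)(v - \psi(v)) < u(1-v)
\]
rewrites as $(1-\phi^*(u))\psi_*(v) < \phi^*(u)(1-\psi_*(v))$, which simplifies to $\phi^*(u) > \psi_*(v)$ after cancelling the cross term $\phi^*(u)\psi_*(v)$.

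Third, I would translate the formula on the SW corner factor-by-factor. The prefactor $\dot C(\alpha,\beta)/(\alpha\beta)$ becomes $(\alpha - C(\alpha, 1-\beta))/(\alpha\beta)$, and each product factor transforms under \eqref{eq:orig_generators} and \eqref{orig_stars} via $\widehat f^{(k)}(u) = \phi^{(k)}(u)$, $\widehat g^{(k)}(1-v) = 1 - \psi^{(k)}(v)$, and the identities $f^*(u) = (1-\phi^*(u))/\phi^*(u)$ and $g^*(1-v) = \psi_*(v)/(1-\psi_*(v))$. Convergence of the product is inherited directly from Theorem \ref{thm:main}\textbf{(d)}. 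Subtracting the resulting limit from $u$ produces formula (d); in the complementary case $\phi^*(u) \leqslant \psi_*(v)$ the limit of $\dot T_{f,g}^{(n)}(\dot C)(u,1-v)$ vanishes, giving $u$ as claimed. The main obstacle is purely bookkeeping—tracking the compass correspondence and the condition equivalence without sign errors—since no new convergence analysis is required.
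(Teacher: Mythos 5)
Your proposal is correct and is essentially the paper's own argument: the paper proves this theorem by exactly the ``straightforward computation'' you carry out, namely applying the flip $T_{\phi,\psi}^{(n)}(C)(u,v)=u-\dot{T_{f,g}}^{(n)}(\dot{C})(u,1-v)$, invoking Theorem \ref{thm:main}, and translating generators and the dichotomy condition via \eqref{eq:orig_generators} and \eqref{orig_stars}. Your corner matching and the equivalence $f^*(u)g^*(1-v)<1 \Leftrightarrow \phi^*(u)>\psi_*(v)$ check out, so nothing is missing.
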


\begin{proof} Straightforward computation.
\end{proof}

\section{ Inheritance of dependence properties via reflected maxmin transformation }\label{sec:properties}

Using Theorem \ref{thm:main} we upgrade the result presented in Corollary \ref{pqd}. Here and in the rest of this section we denote by $\overline{C}_{\phi,\psi}$ the limit copula of the sequence ${T_{\phi,\psi}}^{(n)}({C})$.

\begin{theorem}
  \begin{description}
    \item[(a)] If copula $C$ is PQD, or equivalently $\dot{C}$ is NQD, then ${\overline{C}_{f,g}}^\sigma$ is NQD.
    \item[(b)] If either the function $f(x)$ is nonzero for all $x\in(0,1)$ or the function $g(x)$ is nonzero for all $x\in(0,1)$, then ${\overline{C}_{f,g}}^\sigma$ is an NQD copula for any copula $C$.
    \item[(c)] If copula $C$ is PQD, then $\overline{C}_{\phi,\psi}$ is PQD.
    \item[(d)] If either for the function $\phi$ it holds that $\phi(x) \ne x$ for all $x \in (0, 1)$ or for the function $\psi$ it holds that $\psi(x) \ne x$ for all $x \in (0, 1)$, then $\overline{C}_{\phi,\psi}$ is a PQD copula for any copula $C$.
  \end{description}
\end{theorem}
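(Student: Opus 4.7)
My strategy is to establish (a) and (b) directly using the tools developed in Sections~\ref{sec:QSI} and \ref{sec:iteration}, and then deduce (c) and (d) from them by transporting everything through the flip.

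For (a), Corollary~\ref{pqd} already gives that each iterate $\dot{T_{f,g}}^{(n)}(\dot{C})$ is NQD whenever $C$ is PQD. Theorem~\ref{thm:main} guarantees the sequence converges pointwise, and the inequality $D(u,v)\leqslant uv$ is preserved under pointwise limits, so ${\overline{C}_{f,g}}^\sigma$ is NQD.

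For (b), the hypothesis $f(x)\neq 0$ on $(0,1)$ forces $\alpha=1$, since $\alpha$ is by construction the smallest $u\in(0,1]$ with $f(u)=0$; symmetrically, $g(x)\neq 0$ on $(0,1)$ forces $\beta=1$. Assuming $\alpha=1$, I inspect each corner from Theorem~\ref{thm:main}. On the NE, NW, and SE corners the formula involves $u=1$ or the factor $\dot{C}(\alpha,v)=\dot{C}(1,v)=1-C(1,1-v)=v$, and a direct substitution gives exactly $uv$ in each case. On the SW corner the prefactor $\dot{C}(\alpha,\beta)/(\alpha\beta)$ reduces to $\dot{C}(1,\beta)/\beta=\beta/\beta=1$, so the limit simplifies to $uv\prod_{k\geqslant 0}\bigl(1-f^*(\widehat{f}^{(k)}(u))g^*(\widehat{g}^{(k)}(v))\bigr)$ or to $0$, each value bounded above by $uv$. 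The $\beta=1$ case is symmetric. The crucial point is that the $C$-dependent prefactor collapses to $1$ whenever at least one of $\alpha,\beta$ is $1$, which is precisely what removes the dependence of the bound on $C$.

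Parts (c) and (d) follow by the flip correspondence. The discussion at the end of Section~\ref{sec:iteration} records that for every $n$ one has $\dot{T_{f,g}}^{(n)}(\dot{C}) = \bigl(T_{\phi,\psi}^{(n)}(C)\bigr)^\sigma$, so taking the pointwise limit yields $\overline{C}_{\phi,\psi}=\bigl({\overline{C}_{f,g}}^\sigma\bigr)^\sigma$. Because the flip $C\mapsto C^\sigma$ interchanges the PQD and NQD properties, (a) yields (c) immediately. For (d), the identities \eqref{inverse_generators}, namely $f(u)=\phi(u)-u$ and $g(v)=1-v-\psi(1-v)$, translate the hypotheses literally: $\phi(x)\neq x$ on $(0,1)$ is equivalent to $f(x)\neq 0$ on $(0,1)$, and $\psi(x)\neq x$ on $(0,1)$ is equivalent to $g(x)\neq 0$ on $(0,1)$; thus (d) is precisely (b) viewed through the flip. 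The only place that requires genuine content beyond Corollary~\ref{pqd} and the flip bookkeeping is the SW-corner computation in (b).
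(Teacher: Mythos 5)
Your proof is correct and follows the route the paper intends (the authors omit the proof as going ``in an obvious way''): parts (a) and (c) come from Corollary~\ref{pqd}, pointwise convergence, and the fact that the flip exchanges PQD and NQD, while parts (b) and (d) come from the corner-by-corner formulas of Theorem~\ref{thm:main} once the hypothesis forces $\alpha=1$ or $\beta=1$. The SW-corner computation and the translation of the hypotheses of (d) into those of (b) via \eqref{inverse_generators} are exactly the details the paper leaves to the reader, and you have supplied them correctly.
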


We will omit the proof since it goes in an obvious way.\\

{
\begin{table}
  \caption{Spearman's rho for copulas $\Pi,M$, $W$ and Clayton copula $K_{-0.7}$}
  \centering

  \begin{tabular}{|rrrrrrrr|}
\hline\hline
$\alpha$ & $\beta$ & \qquad & $\rho(\dot{\Pi})$ & $\rho(\dot{T}^{(1)}\dot{\Pi})$ & $\rho(\dot{T}^{(2)}\dot{\Pi})$ & $\rho(\dot{T}^{(3)}\dot{\Pi})$ & $\rho(\dot{T}^{(4)}\dot{\Pi})$   \\
\hline
0.1 & 0.1 & \qquad & -0.0000 & -0.0083 & -0.0149 & -0.0201 & -0.0242  \\
0.1 & 0.5 & \qquad & -0.0000 & -0.0525 & -0.0708 & -0.0783 & -0.0816  \\
0.1 & 0.9 & \qquad & -0.0000 & -0.1215 & -0.1268 & -0.1273 & -0.1273  \\
0.5 & 0.1 & \qquad & -0.0000 & -0.0525 & -0.0708 & -0.0783 & -0.0816  \\
0.5 & 0.5 & \qquad & -0.0000 & -0.2952 & -0.3300 & -0.3375 & -0.3393  \\
0.5 & 0.9 & \qquad & -0.0000 & -0.5419 & -0.5497 & -0.5500 & -0.5500  \\
0.9 & 0.1 & \qquad & -0.0000 & -0.1215 & -0.1268 & -0.1273 & -0.1273  \\
0.9 & 0.5 & \qquad & -0.0000 & -0.5419 & -0.5497 & -0.5500 & -0.5500  \\
0.9 & 0.9 & \qquad & -0.0000 & -0.8629 & -0.8646 & -0.8646 & -0.8646  \\
\hline
\end{tabular}

\vspace{3mm}

\begin{tabular}{|rrrrrrrr|}
\hline\hline
$\alpha$ & $\beta$ & \qquad & $\rho(\dot{M})$ & $\rho(\dot{T}^{(1)} \dot{M})$ & $\rho(\dot{T}^{(2)} \dot{M})$ & $\rho(\dot{T}^{(3)} \dot{M})$ & $\rho(\dot{T}^{(4)} \dot{M})$ \\
\hline
0.1 & 0.1 & \qquad & -1.0000 & -0.8650 & -0.7387 & -0.6233 & -0.5200  \\
0.1 & 0.5 & \qquad & -1.0000 & -0.5610 & -0.2996 & -0.1747 & -0.1218  \\
0.1 & 0.9 & \qquad & -1.0000 & -0.2154 & -0.1345 & -0.1279 & -0.1274  \\
0.5 & 0.1 & \qquad & -1.0000 & -0.5610 & -0.2996 & -0.1747 & -0.1218  \\
0.5 & 0.5 & \qquad & -1.0000 & -0.4667 & -0.3633 & -0.3450 & -0.3411  \\
0.5 & 0.9 & \qquad & -1.0000 & -0.5611 & -0.5505 & -0.5501 & -0.5501  \\
0.9 & 0.1 & \qquad & -1.0000 & -0.2154 & -0.1345 & -0.1279 & -0.1274  \\
0.9 & 0.5 & \qquad & -1.0000 & -0.5611 & -0.5505 & -0.5501 & -0.5501  \\
0.9 & 0.9 & \qquad & -1.0000 & -0.8650 & -0.8646 & -0.8646 & -0.8646  \\
\hline
\end{tabular}

\vspace{3mm}

\begin{tabular}{|rrrrrrrr|}
\hline\hline
$\alpha$ & $\beta$ & \qquad & $\rho(\dot{W})$ & $\rho(\dot{T}^{(1)} \dot{W})$ & $\rho(\dot{T}^{(2)} \dot{W})$ & $\rho(\dot{T}^{(3)} \dot{W})$ & $\rho(\dot{T}^{(4)} \dot{W})$  \\
\hline
0.1 & 0.1 & \qquad & 1.0000 & 0.8548 & 0.7350 & 0.6349 & 0.5502  \\
0.1 & 0.5 & \qquad & 1.0000 & 0.4891 & 0.2249 & 0.0794 & 0.0008  \\
0.1 & 0.9 & \qquad & 1.0000 & 0.0037 & -0.1135 & -0.1259 & -0.1272  \\
0.5 & 0.1 & \qquad & 1.0000 & 0.4891 & 0.2249 & 0.0794 & 0.0008  \\
0.5 & 0.5 & \qquad & 1.0000 & 0.0167 & -0.1859 & -0.2677 & -0.3049  \\
0.5 & 0.9 & \qquad & 1.0000 & -0.4639 & -0.5406 & -0.5491 & -0.5500  \\
0.9 & 0.1 & \qquad & 1.0000 & 0.0037 & -0.1135 & -0.1259 & -0.1272  \\
0.9 & 0.5 & \qquad & 1.0000 & -0.4637 & -0.5406 & -0.5491 & -0.5500  \\
0.9 & 0.9 & \qquad & 1.0000 & -0.8302 & -0.8613 & -0.8643 & -0.8645  \\
\hline
\end{tabular}

\vspace{3mm}

\begin{tabular}{|rrrrrrrr|}
\hline\hline
$\alpha$ & $\beta$ & \qquad & $\rho(\dot{K})$ & $\rho(\dot{T}^{(1)}\dot{K})$ & $\rho(\dot{T}^{(2)}\dot{K})$ & $\rho(\dot{T}^{(3)}\dot{K})$ & $\rho(\dot{T}^{(4)}\dot{K})$   \\
\hline
0.1 & 0.1 & \qquad & -0.6844 & -0.5775 & -0.4828 & -0.4007 & -0.3310 \\
0.1 & 0.5 & \qquad & -0.6844 & -0.3652 & -0.2052 & -0.1354 & -0.1061 \\
0.1 & 0.9 & \qquad & -0.6844 & -0.1754 & -0.1314 & -0.1277 & -0.1273 \\
0.5 & 0.1 & \qquad & -0.6844 & -0.3652 & -0.2052 & -0.1354 & -0.1061 \\
0.5 & 0.5 & \qquad & -0.6844 & -0.3988 & -0.3519 & -0.3426 & -0.3406 \\
0.5 & 0.9 & \qquad & -0.6844 & -0.5544 & -0.5502 & -0.5501 & -0.5501 \\
0.9 & 0.1 & \qquad & -0.6844 & -0.1754 & -0.1314 & -0.1277 & -0.1273 \\
0.9 & 0.5 & \qquad & -0.6844 & -0.5544 & -0.5502 & -0.5501 & -0.5501 \\
0.9 & 0.9 & \qquad & -0.6844 & -0.8643 & -0.8646 & -0.8646 & -0.8646 \\
\hline
\end{tabular}

\end{table}
}

We want to observe how two important concordance measures (sometimes seen as coefficients) of copulas, Spearman's rho and Kendall's tau, change when transformation $\dot{T}$ is applied to them. We refer the reader to \cite[Chapter 5]{Nels} and \cite[Section 2.4]{DuSe} for definitions and explanation of these coefficients. Although formula \eqref{inverse_maxmin_final} is simpler than \cite[(2.3)]{DuOmOrRu} as we pointed out, computing the two coefficients is still a nontrivial task.  In the paper \cite{OmRu}, they were computed only for the case of independent shocks, of course. To go for dependent shocks we need to use the formulas given in \cite{FrNe}:
\[
    \rho(C)=12\int_{0}^{1}\int_{0}^{1}C(u,v)\,du\,dv-3
\]
and
\[
    \tau(C)=4\int_{0}^{1}\int_{0}^{1}C(u,v)\,dC(u,v)-1=1-4 \int_{0}^{1}\int_{0}^{1}C'_u(u,v)C'_v(u,v)\,du\,dv.
\]
Here $C'_t$ is the derivative of $C$ with respect to variable $t$. We applied these formulas to generators $f(u)=u^{1-a}-u$ and $g(v)=v^{1-b}-v$, for all combinations of parameters $a,b \in \{0.1, 0.5, 0.9\}$, and for copulas $\Pi, W$, and $M$. We computed the integrals using the exact expression for copulas and by numerical integration using the Mathematica software \cite{math}. The results are presented in Tables 1 and 2.
The presented experiment suggests that the convergence of the iteration studied in this section is substantially faster with parameters $\alpha$ and $\beta$ big.

{

\begin{table}
  \caption{Kendall's tau for copulas $\Pi,M$, $W$ and Clayton copula $K_{-0.7}$}
  \centering
\begin{tabular}{|rrrrrrrr|}
\hline\hline
$\alpha$ & $\beta$ & \qquad & $\tau(\dot{\Pi})$ & $\tau(\dot{T}^{(1)}\dot{\Pi})$ & $\tau(\dot{T}^{(2)}\dot{\Pi})$ & $\tau(\dot{T}^{(3)}\dot{\Pi})$ & $\tau(\dot{T}^{(4)}\dot{\Pi})$ \\
\hline
0.1 & 0.1 & \qquad & 0.0000 & -0.0055 & -0.0099 & -0.0134 & -0.0162 \\
0.1 & 0.5 & \qquad & 0.0000 & -0.0351 & -0.0473 & -0.0523 & -0.0545 \\
0.1 & 0.9 & \qquad & 0.0000 & -0.0837 & -0.0871 & -0.0874 & -0.0874 \\
0.5 & 0.1 & \qquad & 0.0000 & -0.0351 & -0.0473 & -0.0523 & -0.0545 \\
0.5 & 0.5 & \qquad & 0.0000 & -0.2111 & -0.2338 & -0.2387 & -0.2399 \\
0.5 & 0.9 & \qquad & 0.0000 & -0.4368 & -0.4410 & -0.4412 & -0.4412 \\
0.9 & 0.1 & \qquad & 0.0000 & -0.0837 & -0.0871 & -0.0874 & -0.0874 \\
0.9 & 0.5 & \qquad & 0.0000 & -0.4372 & -0.4410 & -0.4412 & -0.4412 \\
0.9 & 0.9 & \qquad & 0.0000 & -0.8063 & -0.8064 & -0.8064 & -0.8064 \\
\hline
\end{tabular}

\vspace{3mm}

\begin{tabular}{|rrrrrrrr|}
\hline\hline
$\alpha$ & $\beta$ & \qquad & $\tau(\dot{M})$ & $\tau(\dot{T}^{(1)}\dot{M})$ & $\tau(\dot{T}^{(2)} \dot{M})$ & $\tau(\dot{T}^{(3)} \dot{M})$ & $\tau(\dot{T}^{(4)} \dot{M})$ \\
\hline
0.1 & 0.1 & \qquad & -1.0000 & -0.8065 & -0.6457 & -0.5139 & -0.4073 \\
0.1 & 0.5 & \qquad & -1.0000 & -0.4475 & -0.2118 & -0.1184 & -0.0817 \\
0.1 & 0.9 & \qquad & -1.0000 & -0.1495 & -0.0923 & -0.0879 & -0.0875 \\
0.5 & 0.1 & \qquad & -1.0000 & -0.4475 & -0.2118 & -0.1184 & -0.0817 \\
0.5 & 0.5 & \qquad & -1.0000 & -0.3333 & -0.2561 & -0.2437 & -0.2411 \\
0.5 & 0.9 & \qquad & -1.0000 & -0.4474 & -0.4415 & -0.4412 & -0.4412 \\
0.9 & 0.1 & \qquad & -1.0000 & -0.1495 & -0.0923 & -0.0879 & -0.0875 \\
0.9 & 0.5 & \qquad & -1.0000 & -0.4474 & -0.4415 & -0.4412 & -0.4412 \\
0.9 & 0.9 & \qquad & -1.0000 & -0.8065 & -0.8064 & -0.8064 & -0.8064 \\
\hline
\end{tabular}

\vspace{3mm}

  \begin{tabular}{|rrrrrrrr|}
\hline\hline
$\alpha$ & $\beta$ & \qquad & $\tau(\dot{W})$ & $\tau(\dot{T}^{(1)} \dot{W})$ & $\tau(\dot{T}^{(2)} \dot{W})$ & $\tau(\dot{T}^{(3)} \dot{W})$ & $\tau(\dot{T}^{(4)} \dot{W})$ \\
\hline
0.1 & 0.1 & \qquad & 1.0000 & 0.8000 & 0.6540 & 0.5428 & 0.4553 \\
0.1 & 0.5 & \qquad & 1.0000 & 0.4000 & 0.1692 & 0.0577 & 0.0015 \\
0.1 & 0.9 & \qquad & 1.0000 & 0.0001 & -0.0810 & -0.0875 & -0.0875 \\
0.5 & 0.1 & \qquad & 1.0000 & 0.4000 & 0.1692 & 0.0577 & 0.0015 \\
0.5 & 0.5 & \qquad & 1.0000 & 0.0000 & -0.1413 & -0.1951 & -0.2187 \\
0.5 & 0.9 & \qquad & 1.0000 & -0.4000 & -0.4368 & -0.4412 & -0.4412 \\
0.9 & 0.1 & \qquad & 1.0000 & 0.0001 & -0.0810 & -0.0875 & -0.0875 \\
0.9 & 0.5 & \qquad & 1.0000 & -0.4000 & -0.4368 & -0.4412 & -0.4412 \\
0.9 & 0.9 & \qquad & 1.0000 & -0.8000 & -0.8058 & -0.8064 & -0.8064 \\
\hline
\end{tabular}

\vspace{3mm}

\begin{tabular}{|rrrrrrrr|}
\hline\hline
$\alpha$ & $\beta$ & \qquad & $\tau(\dot{K})$ & $\tau(\dot{T}^{(1)}\dot{K})$ & $\tau(\dot{T}^{(2)} \dot{K})$ & $\tau(\dot{T}^{(3)} \dot{K})$ & $\tau(\dot{T}^{(4)} \dot{K})$ \\
\hline
0.1 & 0.1 & \qquad & -0.5385 & -0.4368 & -0.3532 & -0.2854 & -0.2308 \\
0.1 & 0.5 & \qquad & -0.5385 & -0.2569 & -0.1390 & -0.0909 & -0.0710 \\
0.1 & 0.9 & \qquad & -0.5385 & -0.1202 & -0.0902 & -0.0877 & -0.0875 \\
0.5 & 0.1 & \qquad & -0.5385 & -0.2569 & -0.1390 & -0.0909 & -0.0710 \\
0.5 & 0.5 & \qquad & -0.5385 & -0.2810 & -0.2483 & -0.2421 & -0.2407 \\
0.5 & 0.9 & \qquad & -0.5385 & -0.4436 & -0.4413 & -0.4412 & -0.4412 \\
0.9 & 0.1 & \qquad & -0.5385 & -0.1202 & -0.0902 & -0.0877 & -0.0875 \\
0.9 & 0.5 & \qquad & -0.5385 & -0.4436 & -0.4413 & -0.4412 & -0.4412 \\
0.9 & 0.9 & \qquad & -0.5385 & -0.8064 & -0.8064 & -0.8064 & -0.8064 \\
\hline
\end{tabular}
\end{table}
}

Next we compute analytically the tail dependence coefficients for the limit reflected maxmin copula,
see \cite[Section 5.4]{Nels} and \cite[Section 2.6.1]{DuSe}. One may find there an equivalent expression for each of the coefficients
\[
\lambda_L=\lim_{t\downarrow0}\frac{\delta_C(t)}{t}\quad\mbox{and}\quad \lambda_U=\lim_{t\uparrow1}\frac{1-2t+\delta_C(t)}{1-t}.
\]

\begin{proposition}
  \begin{description}
    \item[(a)] If $\alpha=0$ and $\beta=0$, then $\lambda_L ({\overline{C}_{f,g}}^\sigma)= \lambda_L(\dot{C})$, if not, then $\lambda_L ({\overline{C}_{f,g}}^\sigma)=0$.
    \item[(b)] If $\alpha$ and $\beta$ are both strictly smaller than 1, then $\lambda_U({\overline{C}_{f,g}}^\sigma)= \lambda_U(\dot{C})$, if not, then $\lambda_U({\overline{C}_{f,g}}^\sigma)=0$.
    \item[(c)] If either $\alpha\neq 0$ and $\beta\neq 1$ or $\alpha=0$ and $\beta=1$ then $\lambda_L(\overline{C}_{\phi,\psi}) = 0$. If $\alpha= 0$ and $\beta \neq 1$ then $\lambda_L(\overline{C}_{\phi,\psi}) = \lambda_L(C)$.  In the remaining case $\alpha\neq 0$ and $\beta = 1$ the value of $\lambda_L(\overline{C}_{\phi,\psi})$ depends on generators $\phi$ and $\psi$.
    \item[(d)] If either $\alpha\neq 1$ and $\beta\neq 0$ or $\alpha=1$ and $\beta=0$ then $\lambda_U(\overline{C}_{\phi,\psi}) = 0$. If $\alpha\neq 1$ and $\beta = 0$ then $\lambda_U(\overline{C}_{\phi,\psi}) = \lambda_U(C)$.  In the remaining case $\alpha=1$ and $\beta\neq 0$ the value of $\lambda_U(\overline{C}_{\phi,\psi})$ depends on generators $\phi$ and $\psi$.
  \end{description}
\end{proposition}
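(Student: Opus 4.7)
The plan is to reduce each of the four claims to an on-diagonal computation using the explicit forms of $\overline{C}_{f,g}^\sigma$ from Theorem~\ref{thm:main} (for (a), (b)) and of $\overline{C}_{\phi,\psi}$ from Theorem~\ref{thm:maxmin} (for (c), (d)). The lines $u=\alpha$ and $v=\beta$ (respectively $u=\alpha$ and $v=1-\beta$) partition $[0,1]^2$ into four corner regions, each carrying its own closed-form expression, and which region contains the diagonal point $(t,t)$ as $t\to 0^+$ (for $\lambda_L$) or $t\to 1^-$ (for $\lambda_U$) is decided solely by whether $\alpha,\beta$ hit the endpoints $0$ or $1$.

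For part (a), when $\alpha=\beta=0$ the point $(t,t)$ lies in the NE region for every $t>0$, so $\overline{C}_{f,g}^\sigma(t,t)=\dot{C}(t,t)$ and the two lower-tail coefficients coincide. If exactly one of $\alpha,\beta$ is positive, say $\alpha=0$ and $\beta>0$, then small $t$ places $(t,t)$ in the NW region, and $\overline{C}_{f,g}^\sigma(t,t)/t=\dot{C}(t,\beta)/\beta\to 0$ because $\dot{C}(0,\beta)=0$; the case $\alpha>0$, $\beta=0$ is symmetric via the SE region. If both are positive, $(t,t)$ lies in the SW region and the crude bound $\overline{C}_{f,g}^\sigma(t,t)\leq t^2\,\dot{C}(\alpha,\beta)/(\alpha\beta)$ (valid since every factor of the infinite product is in $[0,1]$) gives $\lambda_L=0$. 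Part (b) proceeds symmetrically near $t=1$: the case $\alpha,\beta<1$ reduces the NE formula to $\lambda_U(\dot{C})$, and the mixed cases (exactly one of $\alpha,\beta$ equal to $1$) are immediate because the NW, respectively SE, formula evaluates on the diagonal to $t^2$, giving ratio $(1-t)^2/(1-t)\to 0$.

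The main obstacle is the case $\alpha=\beta=1$ in part (b). Here $(t,t)$ lies in the SW region, $\dot{C}(\alpha,\beta)/(\alpha\beta)=1$, and $\overline{C}_{f,g}^\sigma(t,t)=t^2 P(t)$ with $P(t)=\prod_{k=0}^\infty\bigl(1-f^*(\widehat{f}^{(k)}(t))g^*(\widehat{g}^{(k)}(t))\bigr)$. Rewriting $\lambda_U=2-\lim_{t\to 1^-}(1-\overline{C}_{f,g}^\sigma(t,t))/(1-t)$ and expanding $1-t^2 P(t)=(1-t^2)+t^2(1-P(t))$ reduces the task to showing $(1-P(t))/(1-t)\to 0$. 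For this we use the telescoping identity
\[
\sum_{k=0}^\infty f(\widehat{f}^{(k)}(t)) = \lim_{n\to\infty}\widehat{f}^{(n)}(t)-t = 1-t,
\]
together with $\widehat{f}^{(k)}(t)\geq t$ and the nonincreasingness of $g^*$ from \textbf{(G3)}, which yield
\[
1-P(t) \leq \sum_{k=0}^\infty f^*(\widehat{f}^{(k)}(t))g^*(\widehat{g}^{(k)}(t)) \leq \frac{g^*(t)}{t}\sum_{k=0}^\infty f(\widehat{f}^{(k)}(t)) = \frac{g^*(t)(1-t)}{t},
\]
hence $(1-P(t))/(1-t)\leq g^*(t)/t\to 0$ since $g^*(1)=0$.

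Parts (c) and (d) are handled identically using Theorem~\ref{thm:maxmin}: the SE, SW and NE corners each give a direct diagonal evaluation yielding $\lambda_L(C)$, $\lambda_U(C)$, or $0$ as claimed. The remaining NW corner ($\alpha\neq 0$, $\beta=1$ in (c) and $\alpha=1$, $\beta\neq 0$ in (d)) carries the infinite-product formula from Theorem~\ref{thm:maxmin}(d); exhibiting two pairs of generators with different on-diagonal behaviour at the relevant endpoint confirms that the value genuinely depends on $\phi$ and $\psi$.
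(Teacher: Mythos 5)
The paper gives no argument beyond ``Straightforward computation,'' and your corner-by-corner evaluation of the limit copulas on the diagonal --- including the only genuinely delicate case $\alpha=\beta=1$ in (b), which you settle correctly via $1-P(t)\leq\sum_k f^*(\widehat{f}^{(k)}(t))g^*(\widehat{g}^{(k)}(t))\leq \frac{g^*(t)}{t}\sum_k f(\widehat{f}^{(k)}(t))=\frac{g^*(t)(1-t)}{t}$ and the telescoping identity --- is precisely that computation carried out correctly. The only loose end is in the final clauses of (c) and (d): the assertion that $\lambda_L(\overline{C}_{\phi,\psi})$ (resp.\ $\lambda_U$) ``depends on the generators'' requires actually exhibiting two generator pairs yielding different values of $1-\lim_{t}P(t)$ in the NW-corner formula, which you correctly identify as the remaining task but do not supply.
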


\begin{proof} Straightforward computation.
\end{proof}

At this point it is appropriate to emphasize that in the situation of the semi-group action interpretation of the Durante-Girard-Mazo model presented in \cite[Section 3]{DuGiMa} (cf.\ also \cite{DuGiMa1}) some concrete stability results (cf. \cite[Example 3.1]{DuGiMa}) show similar behavior to what we observed in the case of maxmin and reflected maxmin transformations given in Theorems \ref{thm:main} \& \ref{thm:maxmin}. We think that these similarities are more than just coincidental and deserve to be studied further.

\section{ Multivariate reflected maxmin copulas: $n=3$  }\label{sec:multi3}

It is now time to move to the second main result of the paper, the extension of the dependent reflected maxmin copulas to the multivariate case. In order to make our computations easier to understand, let us first compute one of the two possible cases for three variables. To get a perception of the possible improvement of reflected maxmin approach compared to the original maxmin techniques consider formula on \cite[p.\ 166]{DuOmOrRu}, where two minima and one maximum are taken

\begin{align*}
  T_{\phi_1,\phi_2,\phi_3}(C)(u_1,u_2,u_3) & = C(\phi_1(u_1),\phi_2(u_2), \phi_3(u_3)) \max\{0,\phi_1^*(u_1)-\max\{\phi_2^*(u_2),\phi_3^*(u_3)\}\}
 \\
   & +C(\phi_1(u_1), 1,\phi_3(u_3))\max\{0, \min\{\phi_1^*(u_1), \phi_2^*(u_2)\} - \phi_3^*(u_3)\}
 \\
   & +C(\phi_1(u_1),\phi_2(u_2), 1)\max\{0, \min\{\phi_1^*(u_1),\phi_3^* (u_3)\} - \phi_2^*(u_2)\}
 \\
   & +\phi_1(u_1) \min\{\phi_1^*(u_1),\phi_2^* (u_2),\phi_3^* (u_3)\}.
\end{align*}

We will rewrite this formula into Equation \eqref{sqi_n3p1} in the reflected maxmin setting. An interested reader may find the computations leading to this formula, although somewhat tedious, in the next lines. Our point is that it may be helpful to understand better the main steps of the computation in smaller dimension before we extend these steps to the general level in the next section. First, rewrite the above formula as
\begin{equation}\label{n3p1}
  T_{\phi_1,\phi_2,\phi_3}(C)(u_1,u_2,u_3)= A_1+A_2+A_3+A_4,
\end{equation}
where
\begin{align*}
    \begin{split}
    A_1& = C(\phi_1(u_1), \phi_2(u_2), \phi_3(u_3)) \max\{0,\phi_1^*(u_1)- \max\{\phi_2^*(u_2), \phi_3^*(u_3)\}\} \\
    A_2& = C(\phi_1(u_1), 1, \phi_3(u_3)) \max\{0, \min\{\phi_1^*(u_1), \phi_2^*(u_2)\} - \phi_3^*(u_3)\} \\
    A_3& =C(\phi_1(u_1),\phi_2(u_2), 1) \max\{0, \min\{\phi_1^*(u_1), \phi_3^*(u_3)\} - \phi_2^*(u_2)\} \\
   A_4& = C(\phi_1(u_1),1,1) \min\{\phi_1^*(u_1), \phi_2^*(u_2), \phi_3^*(u_3)\},
    \end{split}
\end{align*}
We want to perform the two flips on the copula $T(\cdot)$ respectively on terms $A_i$. First transform the term $A_1$ by a flip in the second variable, i.e.\ $u_2\mapsto 1-u_2$:
\begin{align*}
    A_1& \mapsto C(\phi_1(u_1), \phi_2(1), \phi_3(u_3))
\max\{0,\min\{\phi_1^*(u_1)- \phi_2^* (1), \phi_1^*(u_1)-\phi_3^* (u_3)\}\}\\
  & -C(\phi_1(u_1), \phi_2(1-u_2), \phi_3(u_3))\max\{0,\min\{\phi_1^*(u_1)- \phi_2^* (1-u_2), \phi_1^*(u_1)-\phi_3^* (u_3)\}\}\\
    &=-C(\phi_1(u_1), \phi_2(1-u_2), \phi_3(u_3)) \max\{0, \min\{\phi_1^*(u_1)- \phi_2^* (1-u_2), \phi_1^*(u_1)-\phi_3^* (u_3)\}\}.
\end{align*}
Next, we perform a flip in the third variable, i.e.\ the transformation $u_3\mapsto 1-u_3$, on $A_1$:
\begin{align*}
    A_1& \mapsto -C(\phi_1(u_1), \phi_2(1-u_2), \phi_3(1))\max\{0, \min\{\phi_1^*(u_1)- \phi_2^* (1-u_2), \phi_1^*(u_1)-\phi_3^* (1)\}\}\\
  & +C(\phi_1(u_1), \phi_2(1-u_2), \phi_3(1-u_3))\max\{0, \min\{\phi_1^*(u_1)- \phi_2^* (1-u_2), \phi_1^*(u_1)-\phi_3^* (1-u_3)\}\}\\
    &=C(\phi_1(u_1), \phi_2(1-u_2), \phi_3(1-u_3))\max\{0, \min\{\phi_1^*(u_1)- \phi_2^* (1-u_2), \phi_1^*(u_1)-\phi_3^* (1-u_3)\}\}.
\end{align*}
We now perform the two flips on $A_2$, first a flip in the second variable, i.e.\ $u_2\mapsto 1-u_2$:
\begin{align*}
  A_2 & \mapsto C(\phi_1(u_1), 1, \phi_3(u_3)) \max\{0, \min\{\phi_1^*(u_1)- \phi_3^*(u_3), \phi_2^*(1)- \phi_3^*(u_3))\} \} \\
   & -C(\phi_1(u_1), 1, \phi_3(u_3)) \max\{0, \min\{\phi_1^*(u_1)- \phi_3^*(u_3), \phi_2^*(1-u_2)- \phi_3^*(u_3)\}\} \\
   & =C(\phi_1(u_1), 1, \phi_3(u_3)) \max\{0, \min\{\phi_1^*(u_1)- \phi_3^*(u_3), \phi_1^*(u_1)-\phi_2^*(1-u_2)\} \}.
\end{align*}
Here, we have used the fact that the second term in the first $\min$ above is clearly greater than the first one and can therefore be neglected, and then we have used an obvious rule
\begin{equation}\label{rule}
  \max\{0,\alpha\}-\max\{0,\min\{\alpha,\beta\}\}=\max\{0,\min\{\alpha, \alpha-\beta\}\},\ \ \mbox{for}\ \ \alpha,\beta\in\mathds{R},
\end{equation}
to simplify the expression containing the two minima. Now, we apply a flip in the third variable, i.e.\ the transformation $u_3\mapsto 1-u_3$, on $A_2$:
\begin{align*}
  A_2 & \mapsto C(\phi_1(u_1), 1, \phi_3(1)) \max\{0, \min\{\phi_1^*(u_1)- \phi_3^*(1), \phi_1^*(u_1)-\phi_2^*(1-u_2)\} \} \\
   & -C(\phi_1(u_1), 1, \phi_3(1-u_3)) \max\{0, \min\{\phi_1^*(u_1)- \phi_3^*(1-u_3), \phi_1^*(u_1)-\phi_2^*(1-u_2)\} \} \\
   & =-C(\phi_1(u_1), 1, \phi_3(1-u_3)) \max\{0, \min\{\phi_1^*(u_1)- \phi_3^*(1-u_3), \phi_1^*(u_1)-\phi_2^*(1-u_2)\} \}.
\end{align*}
It is clear that $A_3$ can be obtained from $A_2$ by exchanging indices $2$ and $3$, so after performing the two transformations $u_2\mapsto 1-u_2$ and  $u_3\mapsto 1-u_3$ (in any order), we get by analogy with the result above:
\begin{align*}
  A_3 & \mapsto \\
   & -C(\phi_1(u_1), \phi_2(1-u_2),1) \max\{0, \min\{\phi_1^*(u_1)- \phi_3^*(1-u_3), \phi_1^*(u_1)-\phi_2^*(1-u_2)\} \}.
\end{align*}
It remains to transform the last term $A_4$, first by flipping the second variable, i.e.\ $u_2\mapsto 1-u_2$:
\begin{align*}
  A_4 & \mapsto C(\phi_1(u_1),1,1) \min\{\phi_1^*(u_1), \phi_2^*(1), \phi_3^*(u_3)\} \\
   & -C(\phi_1(u_1),1,1) \min\{\phi_1^*(u_1), \phi_2^*(1-u_2), \phi_3^*(u_3)\} \\
   & =-C(\phi_1(u_1),1,1) \max\{0,\min\{\phi_1^*(u_1), \phi_3^*(u_3), \phi_1^*(u_1)-\phi_2^*(1-u_2),\\
   & \hskip 20em \phi_3^*(u_3)-\phi_2^*(1-u_2)\}\}\\
   & =-C(\phi_1(u_1),1,1) \max\{0,\min\{ \phi_1^*(u_1)-\phi_2^*(1-u_2), \\  & \hskip 20em \phi_3^*(u_3)-\phi_2^*(1-u_2)\}\}.
\end{align*}
In the considerations above, we have first neglected $\phi_2^*(1)=1$, clearly no smaller than the other two compared quantities of the first $\min$; then we have used Rule \eqref{rule} with $\alpha=\min\{ \phi_1^*(u_1), \phi_3^*(u_3)\}$ and $\beta=\phi_3^*(1-u_3)$.
And finally we flip the third variable, i.e.\ $u_3\mapsto 1-u_3$ in $A_4$:
\begin{align*}
  A_4 & \mapsto -C(\phi_1(u_1),1,1) \max\{0,\min\{ \phi_1^*(u_1)-\phi_2^*(1-u_2), \phi_3^*(1)-\phi_2^*(1-u_2)\}\} \\
   & +C(\phi_1(u_1),1,1) \max\{0,\min\{ \phi_1^*(u_1)-\phi_2^*(1-u_2), \phi_3^*(1-u_3)-\phi_2^*(1-u_2)\}\} \\
   & =C(\phi_1(u_1),1,1) \max\{0,\min\{ \phi_1^*(u_1)-\phi_2^*(1-u_2), \phi_1^*(u_1)-\phi_3^*(1-u_3)\}\}
\end{align*}
and use the above considerations. We have thus transformed maxmin copula $T$ of \eqref{n3p1} into reflected maxmin copula
\begin{equation}\label{inverse_n3p1}
\begin{split}
     \dot{T_{\phi_1,\phi_2,\phi_3}}(C)(u_1,u_2,u_3) &\mapsto \max\{0, \min\{ \phi_1^*(u_1)-\phi_2^*(1-u_2), \phi_1^*(u_1)-\phi_3^*(1-u_3)\}\}\times \\
     & (C(\phi_1(u_1), \phi_2(1-u_2), \phi_3(1-u_3))-C(\phi_1(u_1), 1, \phi_3(1-u_3)) \\
     & -C(\phi_1(u_1),\phi_2(1-u_2),1)+C(\phi_1(u_1),1,1)).
\end{split}
\end{equation}
The second factor above reminds us of the reflected copula related to $C$, but not quite yet. In order do get a better expression we need to replace the generating functions of the maxmin copula and their auxiliary functions with the according reflected maxmin generators and their auxiliaries. It is clear that when we follow \eqref{inverse_generators} we have to use the $\phi\mapsto f$ transformation rule for the generator of index $i=1$ and $\psi\mapsto g$ transformation rule for the generators of indices $j=2,3$:
\begin{align}\label{multi_inverse_generators}
\begin{split}
   f_i(u_i) = \phi_i(u_i) - u_i,\ & f_j(u_j) = 1 - u_j - \phi_j(1 - u_j), \\
   f_i^*(u_i) = \frac{f_i(u_i)}{u_i}, \ \widehat{f}_i(u_i)=u_i+f_i(u_i),\   & f_j^*(u_j) = \frac{f_j(u_j)}{u_j}, \ \widehat{f_j}(u_j)=u_j+f_j(u_j).
\end{split}
\end{align}
Using these definitions, we transform the second factor of \eqref{inverse_n3p1} into
\[
    \dot{C}(\widehat{f}_1(u_1),\widehat{f}_2(u_2),\widehat{f}_3(u_3)),
\]
and the first factor of \eqref{inverse_n3p1} termwise. The first term equals
\[
    \phi_1^*(u_1)-\phi_2^*(1-u_2)=\frac{u_1}{\widehat{f}_1(u_1)}- \frac{f_2(u_2)}{\widehat{f}_2(u_2)}=\frac{u_1u_2-f_1(u_1)f_2(u_2)} {\widehat{f}_1(u_1)\widehat{f}_2(u_2)}
\]
and similarly for the second term, where we replace index $i=2$ with $i=3$. So, with the reflected maxmin generators we can write \eqref{inverse_n3p1} in the form \eqref{sqi_n3p1} as required:
\begin{equation}\label{sqi_n3p1}
    \begin{split}
     &\hspace{35pt}\dot{T_{\widehat{f}_1,\widehat{f}_2,\widehat{f}_3}}(\dot{C})(u_1,u_2,u_3) =\frac{\dot{C}(\widehat{f}_1(u_1),\widehat{f}_2(u_2),\widehat{f}_3(u_3))} {\widehat{f}_1(u_1)\widehat{f}_2(u_2)\widehat{f}_3(u_3)}\times\\
& \max\left\{0,
\min\{ (u_1u_2-f_1(u_1)f_2(u_2)) \widehat{f}_3(u_3), (u_1u_3-f_1(u_1)f_3(u_3)) \widehat{f}_2(u_2)\}\right\}.
    \end{split}
\end{equation}

In Figures 5 to 8 we give examples of 3D scatterplots of reflected maxmin copulas. We explain how the samples of these plots have been generated in the Appendix. Generating functions for the scatterplot in Figure 5 are functions $f_i(u_i)=\frac12(1-u_i)$ and $C$ is the product copula. In the figure there are three different views of the same scatterplot. The copula has 1-dimensional and 2-dimensional singular components. 1-dimensional singular component lies on the curve $u_2=u_3=\dfrac{1-u_1}{1+3u_1}$. 2-dimensional singular components lie on the surfaces $u_2=\dfrac{1-u_1}{1+3u_1}$, $u_3=\dfrac{1-u_1}{1+3u_1}$ and $u_2=u_3$. The zero set of the copula is the set $Z=\{(u_1,u_2,u_3)\in [0,1]^3: \, u_2 \leqslant \dfrac{1-u_1}{1+3u_1} \quad\mathrm{or}\quad u_3 \leqslant \dfrac{1-u_1}{1+3u_1} \}.$ On the complement of the union of the zero set and the singular components  the third mixed derivative of the copula exists and is positive.
\begin{figure}[h!]
  \centering
\begin{center}
\includegraphics[width=3.5cm]{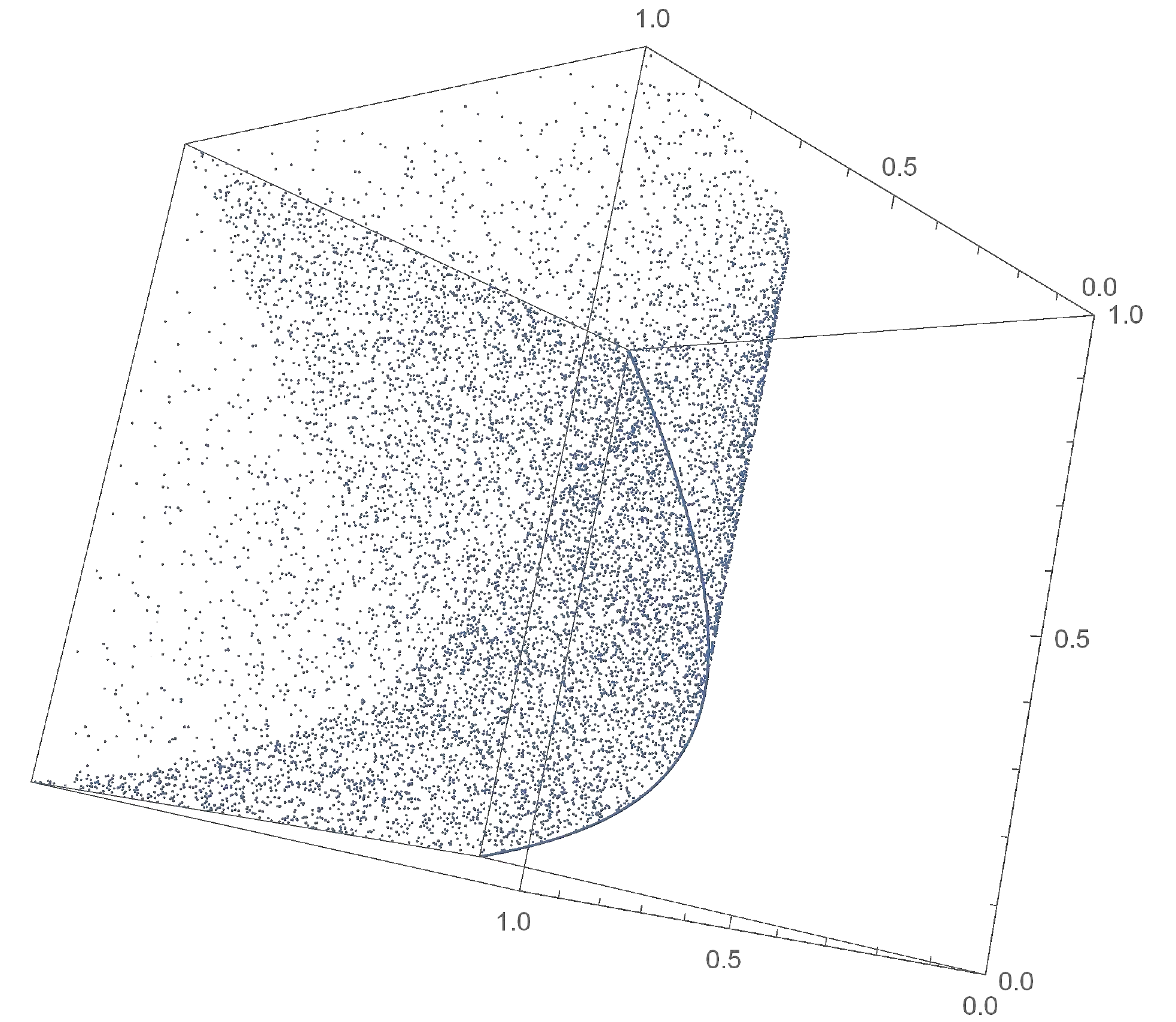} \hfil
\includegraphics[width=3.5cm]{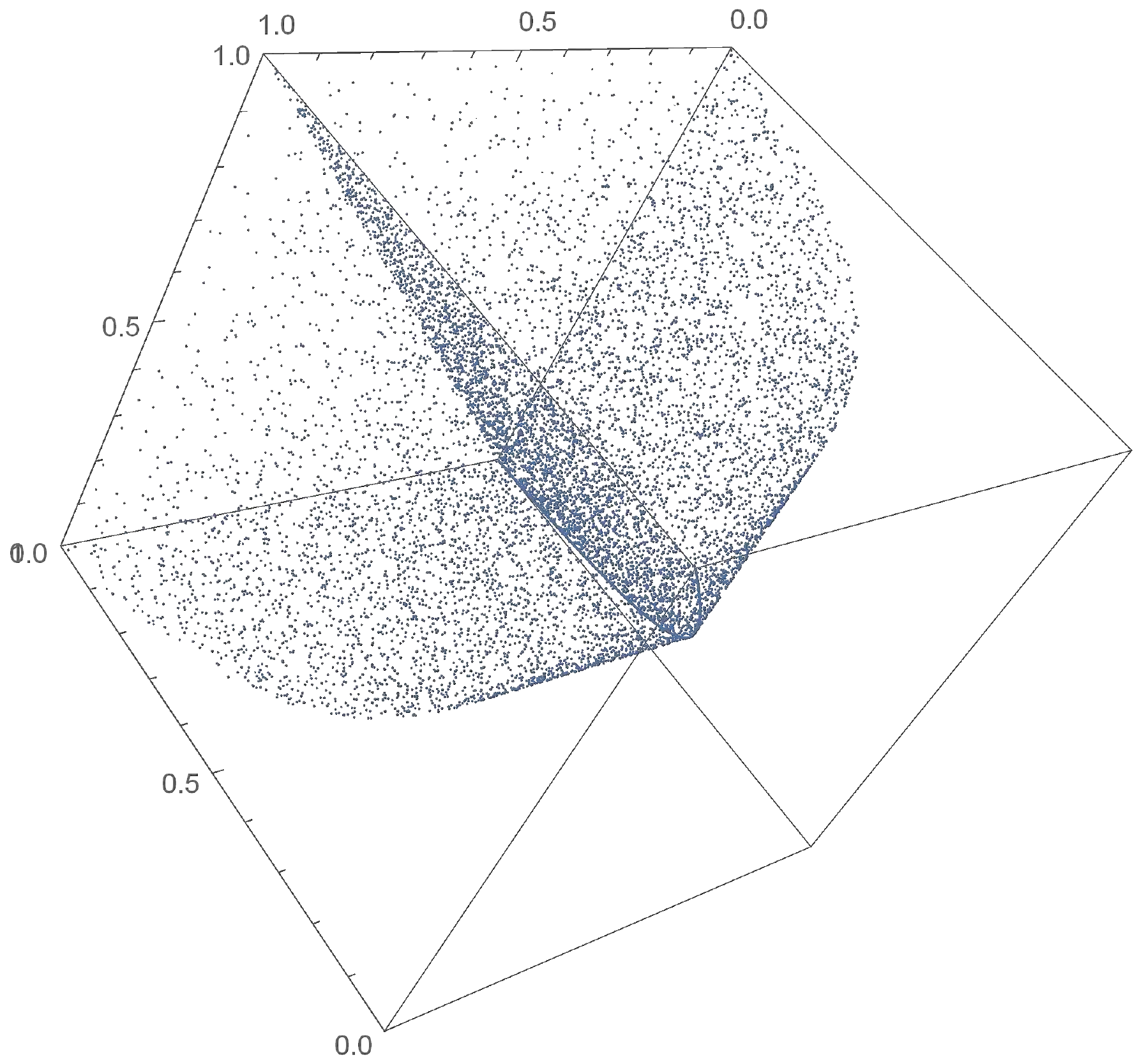} \hfil
\includegraphics[width=3.5cm]{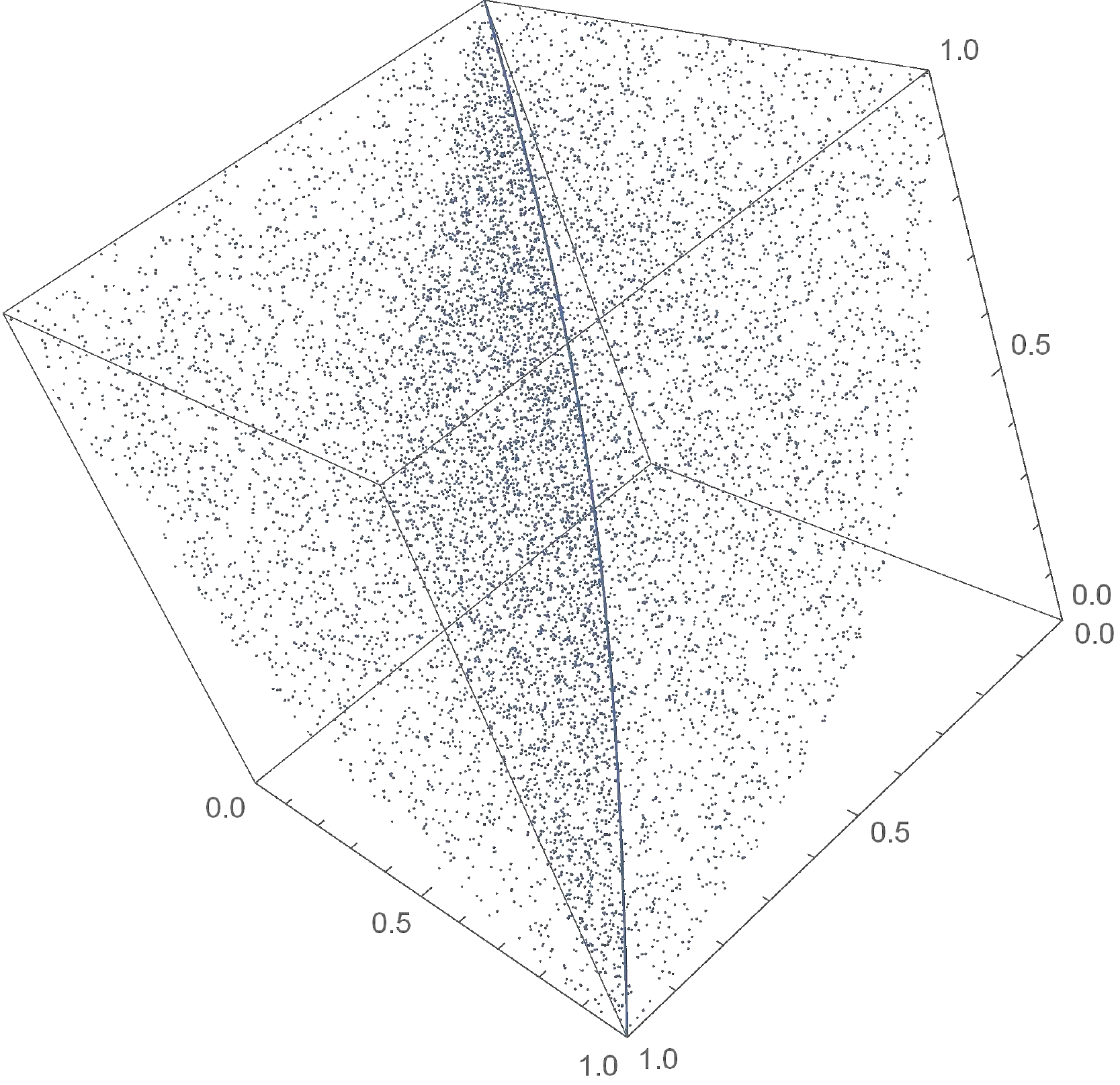}
\caption{3D-Scatterplots of reflected maxmin copulas }
\end{center}
\end{figure}

Generating functions for scatterplot in Figure 6 are functions $f_1(u_1)=\min \{u_1,1-u_1\}$ and $f_i(u_i)=1-u_i$ for $i=2,3$ and $C$ is the product copula. The copula has 1-dimensional and 2-dimensional singular components. 1-dimensional singular component lies on the curve $u_2=u_3=1-u_1$ for $u_1 \geqslant \frac12$. 2-dimensional singular component lies on the surface $u_2=u_3$ for $u_1 \leqslant \frac12$ and $u_2 \geqslant \frac12$. The third mixed derivative of the copula is zero where it exists.

\begin{figure}[h!]
  \centering
  \begin{center}
  \includegraphics[width=5cm]{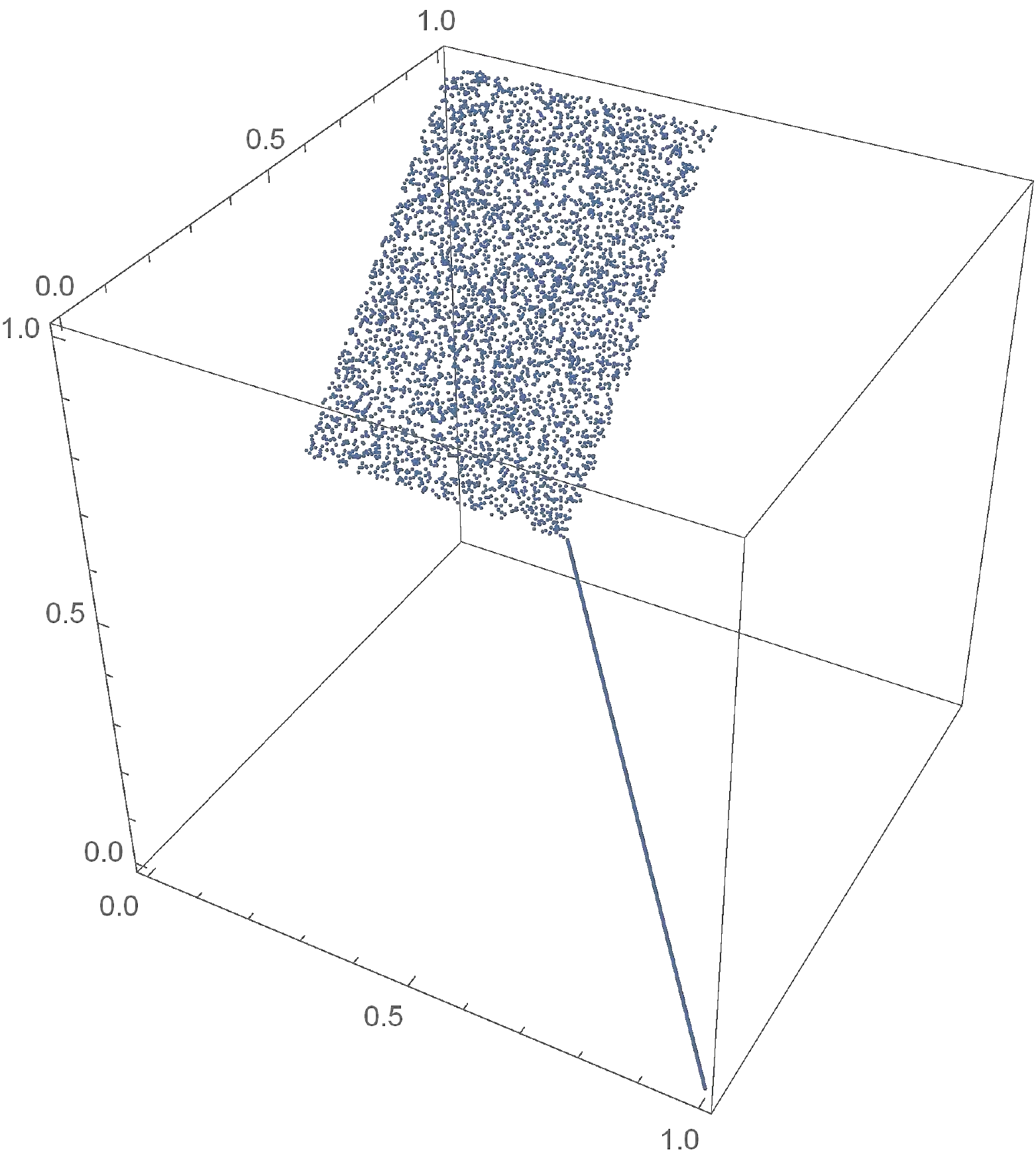}
  \end{center}
\caption{Reflected maxmin copula with a 1-dimensional and 2-dimensional singular component }
\end{figure}

Generating functions for scatterplot in Figure 7 are functions $f_1(u_1)=\min \{u_1,1-u_1\}$, $f_2(u_2)=1-u_2$, $f_3(u_3)=\frac12(1-u_3)$ and $C$ is the product copula. The copula has 1-dimensional and 2-dimensional singular components. 1-dimensional singular component lies on the curve $u_2=1-u_1$, $u_3=\dfrac{1-u_1}{1+u_1}$ for $u_1 \geqslant \frac12$. 2-dimensional singular components lie on the surfaces $u_2=1-u_1$, $u_3 \geqslant \dfrac{1-u_1}{1+u_1}$ for $u_1 \geqslant \frac12$ and $u_3=\frac{u_2}{2-u_2} $ for $u_1\leqslant \frac12$ and $u_2 \geqslant \frac12$. The third mixed derivative of the copula  exists and is positive on the region $\{(u_1,u_2,u_3) \in [0,1]^3: \, u_1<\frac12 \quad \mathrm{and} \quad u_3 >  \dfrac{u_2}{2-u_2}\}$. In the figure there are three different views of the same scatterplot.

\begin{figure}[h!]
    \centering
\begin{center}
\includegraphics[width=3.5cm]{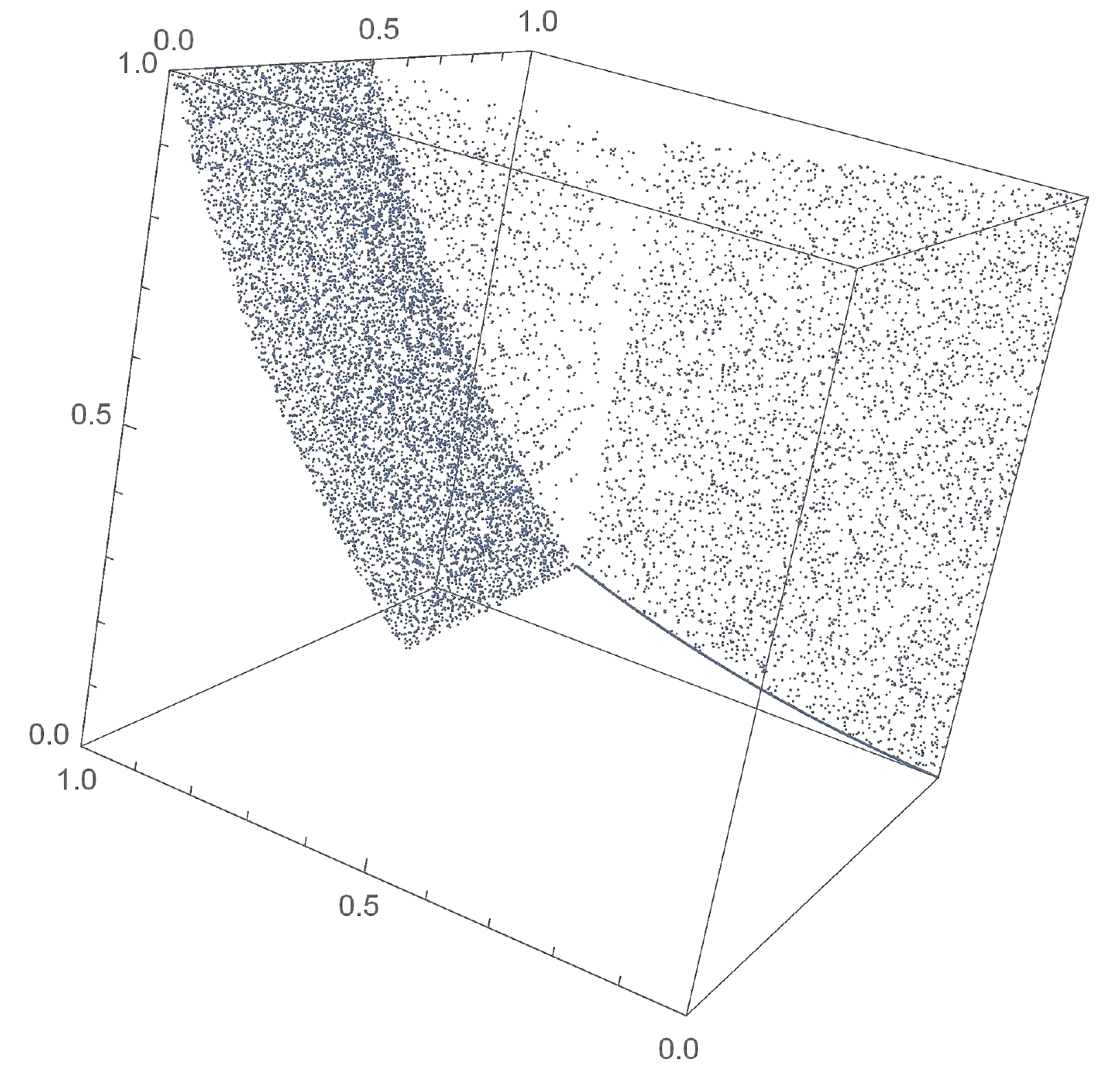}
\includegraphics[width=3.5cm]{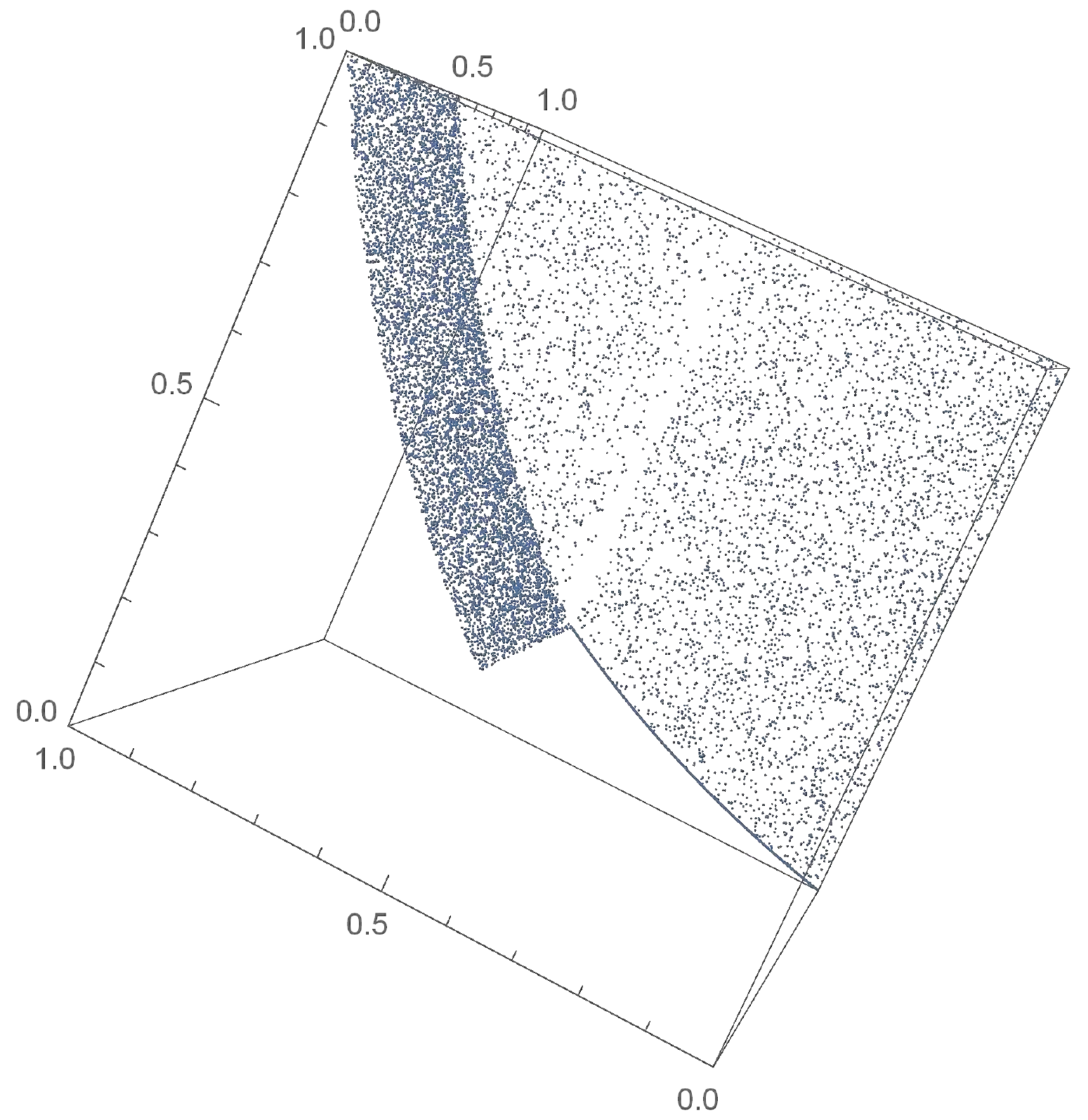}
\includegraphics[width=3.5cm]{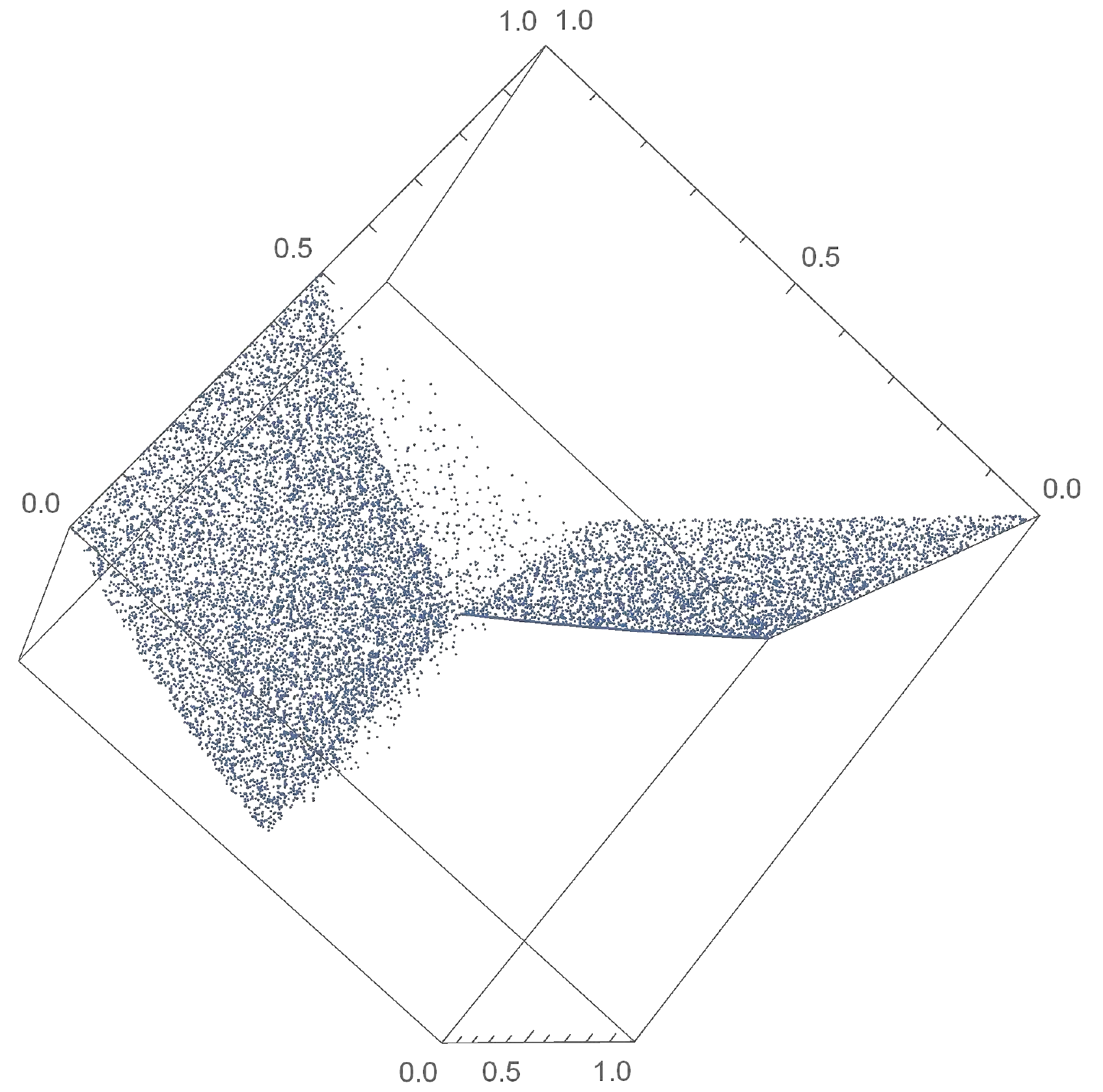}
\end{center}
    \caption{Reflected maxmin copula with a 1-dimensional and two 2-dimensional singular components }
\end{figure}

Generating functions for scatterplot in Figure 8 are functions $f_1(u_1)=\min \{u_1,1-u_1\}$, $f_2(u_2)=1-u_2$, $f_3(u_3)=\frac12(1-u_3)$ and $C(u_1,u_2,u_3)=M(u_1,u_2,u_3)=\min\{u_1,u_2,u_3\}$. The copula has 1-dimensional and 2-dimensional singular components. 1-dimensional singular component lies on the curve $u_2=1-u_1$, $u_3=\dfrac{1-u_1}{1+u_1}$ for $u_1 \geqslant \frac12$. The first 2-dimensional singular component lies on the surface $u_2=1-u_1$, $u_3 \geqslant \dfrac{1-u_1}{1+u_1}$ for $u_1 \geqslant \frac12$, the second lies on $u_2 \geqslant \dfrac{4u_1-1}{2u_1} $, $u_3= \frac{u_2}{2-u_2}$ for $u_1\leqslant \frac12$, $u_2 \geqslant \frac12$, and the third lies on $u_3=4u_1-1$ for $u_1 \leqslant \frac12$, $\frac12\leqslant u_2 \leqslant \dfrac{4u_1-1}{2u_1}$. The third mixed derivative of the copula is zero where it exists. In the figure there are two different views of the same scatterplot.

\begin{figure}[h!]
    \centering
\begin{center}
\begin{center}
\includegraphics[width=4.5cm]{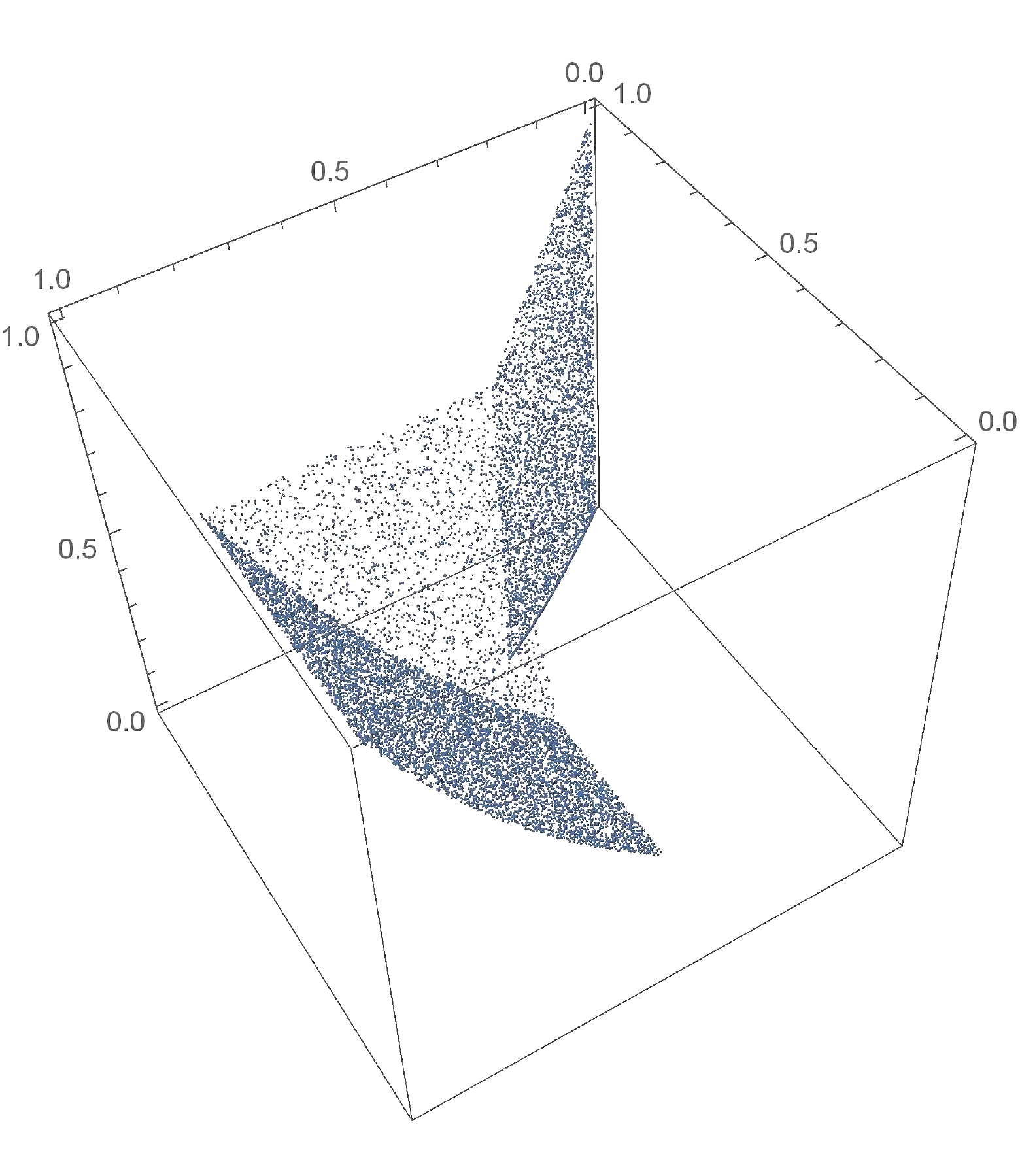}
\includegraphics[width=4.5cm]{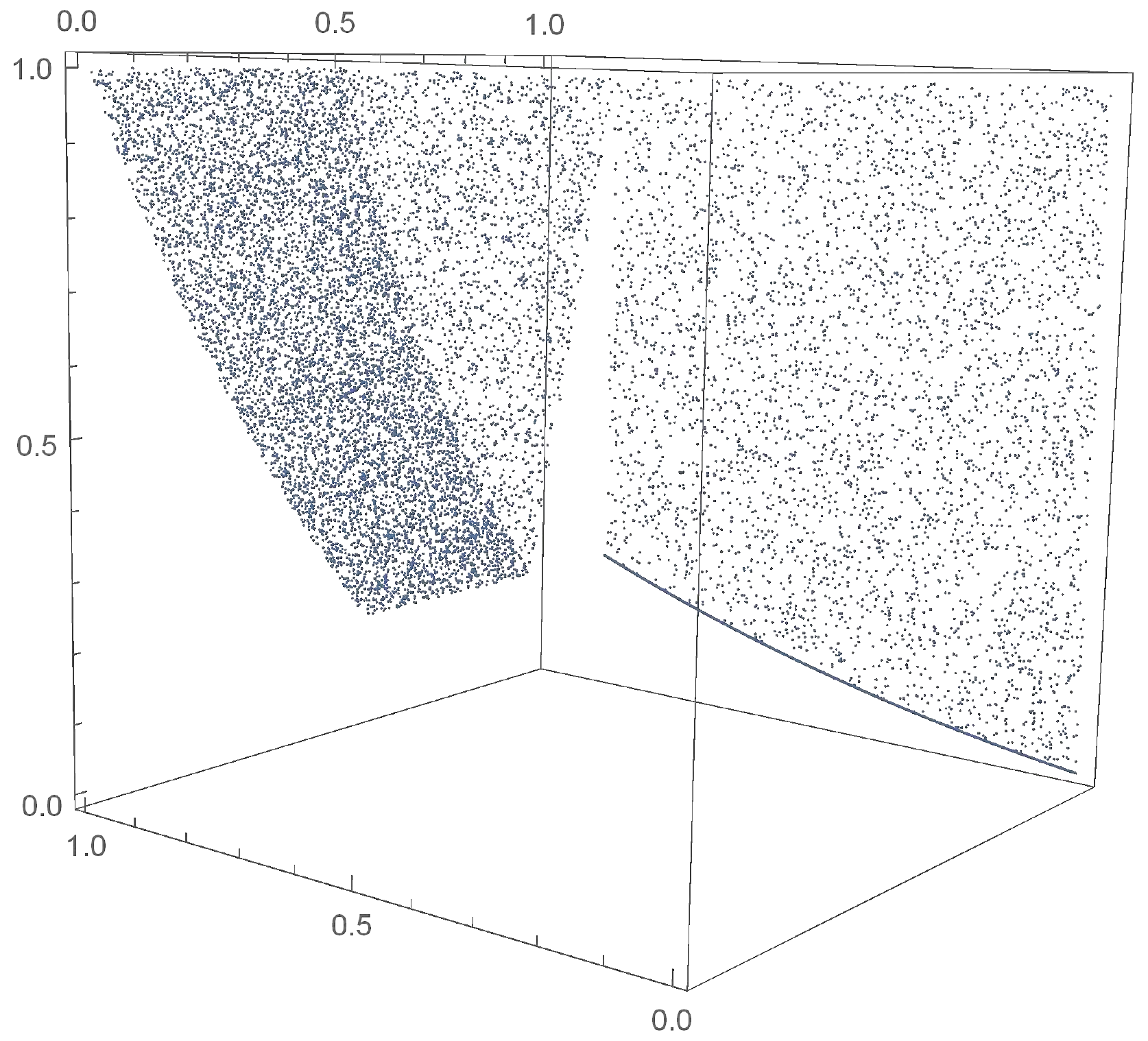}
\end{center}\end{center}
    \caption{Another example of reflected maxmin copula with more singular components }
\end{figure}

After we did all this for one important case of $n=3$, we can go for general $n$.

\section{ Multivariate reflected maxmin copulas: the general case  }\label{sec:multi}

Recal the formula on \cite[p.\ 166]{DuOmOrRu} that we transformed in Section \ref{sec:multi3} into \eqref{sqi_n3p1}. In order to do that for general $n$, we first derive the general formula in the form of \eqref{inverse_n3p1} of Section \ref{sec:multi3}. We need to specify in what sense the inclusion-exclusion principle is made on the copula $C$ (note that we adjust here the notation of \cite{DuOmOrRu} in an obvious way)

\begin{theorem}
Let $T_{\bf \Phi}(C)({\bf u})$ be a maxmin $n$-copula with $p$ maxima and $n-p$ minima. Then its reflected maxmin copula takes the form
$$\dot{T_{\bf \Phi}}(C)({\bf u})=
C^\prime\max \left\{0, \min_{\substack{j\in \{1,\ldots,p\} \\ k \in \{p+1,\ldots,n\}}} \left(\phi_j^*(u_j) - \phi_k^*(1-u_k) \right) \right\},$$
where $C^\prime$ equals
\[
     \sum_{\mathbf{z}\in{\{0,1\}}^{\{p+1,\ldots,n\}}}(-1)^{\Sigma z} C(\phi_1(u_1),\ldots,\phi_p(u_p),\phi_{p+1}(1-u_{p+1})^{z_{p+1}},\ldots, \phi_n(1-u_n)^{z_n}).
\]
\end{theorem}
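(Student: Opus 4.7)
The plan is to derive the reflected maxmin formula by applying, to the maxmin copula $T_{\bf \Phi}(C)$ of \cite{DuOmOrRu}, the flip transformation in each of the variables $u_{p+1},\ldots,u_n$ corresponding to the $n-p$ minima. A single flip in the variable $u_k$ sends a copula $D$ to $D(\mathbf{u}|_{u_k=1})-D(\mathbf{u}|_{u_k\mapsto 1-u_k})$. Since distinct flips commute and each is an involution preserving the copula property, their composition is well-defined and produces an alternating sum indexed by subsets $Z\subseteq\{p+1,\ldots,n\}$ with signs $(-1)^{|Z|}$, generalizing exactly the double flip worked out in Section \ref{sec:multi3} for the case $n=3$, $p=1$.

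First I would write the maxmin copula $T_{\bf \Phi}(C)$ in the closed form that generalizes the four-term sum on p.~166 of \cite{DuOmOrRu}: a sum over $S\subseteq\{p+1,\ldots,n\}$ of terms
\[
C(\phi_1(u_1),\ldots,\phi_p(u_p),\widetilde u_{p+1}^S,\ldots,\widetilde u_n^S)\cdot\max\Bigl\{0,\min_{j\in\{1,\ldots,p\}\cup S}\phi_j^*(u_j)-\max_{k\in\{p+1,\ldots,n\}\setminus S}\phi_k^*(u_k)\Bigr\},
\]
where $\widetilde u_k^S=1$ for $k\in S$ and $\widetilde u_k^S=\phi_k(u_k)$ otherwise. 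Applying the $n-p$ flips to each of these terms and collecting, the copula factors $C(\ldots)$ assemble into the single inclusion-exclusion expression $C'$ stated in the theorem: the identity $\phi_k(1)=1$ accounts for the ``set to $1$'' choices while $\phi_k(u_k)\mapsto\phi_k(1-u_k)$ accounts for the ``flipped'' ones.

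The main obstacle is showing that the surviving scalar coefficient, after all the sign-bearing cancellations have been performed, collapses to the single clean expression $\max\{0,\min_{j,k}(\phi_j^*(u_j)-\phi_k^*(1-u_k))\}$ with the inner $\min$ ranging over $j\in\{1,\ldots,p\}$ and $k\in\{p+1,\ldots,n\}$. To do this I would extend the identity \eqref{rule} from Section \ref{sec:multi3} to a telescoping lemma: iterated application of
\[
\max\{0,\alpha\}-\max\{0,\min\{\alpha,\beta\}\}=\max\{0,\min\{\alpha,\alpha-\beta\}\},
\]
together with its obvious multivariate analogue obtained by replacing $\alpha$ with a minimum of several terms, pairs each $\phi_j^*(u_j)$ with each $\phi_k^*(1-u_k)$ inside one outer $\max\{0,\cdot\}$. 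The cleanest packaging is an induction on $n-p$: the base case $n-p=1$ is a single-flip computation structurally the same as the one producing \eqref{inverse_maxmin_final} in Section \ref{sec:QSI}, and the inductive step applies one further flip while invoking the telescoping lemma to restore the coefficient to the claimed normal form.
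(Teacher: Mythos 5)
Your proposal is correct and follows essentially the same route as the paper: both start from the subset-indexed formula \cite[(4.4)]{DuOmOrRu}, apply the $n-p$ flips one variable at a time using Rule \eqref{rule} (with $\alpha$ a minimum of several terms) to collapse each term's coefficient to the common factor $\max\{0,\min_{j,k}(\phi_j^*(u_j)-\phi_k^*(1-u_k))\}$, and then observe that this factor is independent of the subset so the remaining signed copula evaluations assemble into the inclusion--exclusion sum $C'$. Your packaging as an induction on $n-p$ with a telescoping lemma is only a cosmetic variant of the paper's explicit variable-by-variable computation (which also uses, as you would need to make explicit, that terms containing $\phi_j^*(u_j)-1\leqslant 0$ inside the minimum vanish).
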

Here, in the last displayed equation, we use one of the standard notations for the inclusion-exclusion principle formula. So, $\mathbf{z}$ denotes a vector of zeros and ones whose entries are indexed by $i\in\{p+1,\ldots,n\}$, and
\[
    \phi_{i}(1-u_{i})^{z_{i}}=\left\{
                                \begin{array}{ll}
                                  1, & \hbox{if $z_i=0$;} \\
                                  \phi_{i}(1-u_{i}), & \hbox{if $z_i=1$.}
                                \end{array}
                              \right.
\]
The notation above means that we are taking into account all the possible vectors of the kind.

\begin{proof}
First recall Formula \cite[(4.4)]{DuOmOrRu}
\[
\begin{split}
    & T_{\bf \Phi}(C)(\bu)= \\
     & \sum_{K \subseteq S} C\left(\bPhi(\bu)_{ \{ 1:p\} \cup K^c}\right) \max \left\{ 0, \min \left\{ \bPhi^*(\bu)_{\left\{ 1:p\right\} \cup K}\right\} -\max \left\{ \bPhi^*(\bu)_{K^c}\right\} \right\},
\end{split}
\]
where $S=\{p+1,\ldots,n \}.$

Take a single term of $T$ for some $K\subseteq S$ with $|K|=r$, and denote it by $T_K$. Without loss of generality we may assume that $K=\{p+1,\ldots,p+r\}$ and $K^c=\{p+r+1,\ldots,n\}.$ To somewhat simplify the notation in what follows we introduce
$$
\widetilde{C}:= C\left(\phi_1(u_1),\ldots,\phi_p(u_p),1,\ldots,1,\phi_{p+r+1}(u_{p+r+1}),\ldots,\phi_n(u_n) \right)
$$
and $\phi_i:=\phi_i(u_i)$.
Then $T_K$ equals
\begin{align*}
T_K &= \widetilde{C} \left\{ 0, \min_{j\in \{1,\ldots,p+r\}} \phi^*_j - \max_{k\in \{p+r+1,\ldots,n\} } \phi^*_k \right\}\\
&= \widetilde{C} \max \left\{ 0, \min_{\substack{j\in \{1,\ldots,p+r\} \\ k \in \{ p+r+1,\ldots n\}}} \left( \phi^*_j -\phi^*_k \right) \right\}.
\end{align*}
We start by transforming the variables with indices in $K$.
Apply transformation $u_{p+1} \mapsto 1-u_{p+1}$ to term $T_K$ to get:
\begin{align*}
T_{K,\{p+1\}} := &\widetilde{C} \max \left\{ 0, \min
\left( \phi^*_j -\phi^*_k \right) \right\} \\
&  -\widetilde{C} \max \left\{ 0, \min
 \left\{ \phi^*_j -\phi^*_k, \, \phi^*_{p+1}(1-u_{p+1}) -\phi^*_k  \right\} \right\} \\
=&  \widetilde{C} \max \left\{ 0, \min \left\{ \phi^*_j -\phi^*_k, \,\phi^*_j - \phi^*_{p+1}(1-u_{p+1}) \right\}\right\},
\end{align*}
where the three inside minima are all taken over $j\in \{1,\ldots,p+r\} \setminus \{p+1\}$ and $k \in \{ p+r+1,\ldots n\}$. (Note that we have used Rule \eqref{rule} here and will be doing so similarly in what follows.)

Next, apply transformation $u_{p+2}\mapsto 1-u_{p+2}$ to term $T_{K,\{p+1\}}$ to get:
\begin{align*}
T_{K,\{p+1,p+2\}} := & \widetilde{C} \max \left\{ 0, \min  \left\{\phi^*_j-\phi^*_k, \, \phi^*_j- \phi^*_{p+1}(1-u_{p+1}) \right\}   \right\} \\
 & - \widetilde{C}  \max \left\{ 0, \min \left\{\phi^*_j-\phi^*_k, \, \phi^*_j- \phi^*_{p+1}(1-u_{p+1}), \right. \right. \\  & \hspace{20pt} \left.\left. \phi^*_{p+2}(1-u_{p+2})- \phi^*_k, \phi^*_{p+2}(1-u_{p+2}) - \phi^*_{p+1}(1-u_{p+1}) \right\}   \right\} \\
 =& \widetilde{C} \max \left\{ 0, \min  \left\{\phi^*_j-\phi^*_k, \, \phi^*_j- \phi^*_{p+1}(1-u_{p+1}), 
\phi^*_j- \phi^*_{p+2}(1-u_{p+2}) \right\} \right\},
\end{align*}
where the inside minima are taken over $j\in \{1,\ldots,p+r\} \setminus \{p+1,p+2\}$ and $k \in \{ p+r+1,\ldots n\}$.

We proceed inductively on the index of transformation variable up to $p+r$ to get the intermediate result
$$T_{K,K}:=\widetilde{C} \max \left\{ 0, \min_{\substack{j\in \{1, \ldots, p\} \\ k \in \{p+r+1,\ldots, n\} \\ \ell \in \{p+1,\ldots,p+r\}}} \left\{ \phi^*_j-\phi^*_k, \phi^*_j- \phi^*_\ell(1-u_\ell) \right\}  \right\}.$$

Now, we start performing the transformations of variables with indices in $K^c$. So far, the factor corresponding to the copula $C$ has not been changing. From now on it will change, but only in indices from $p+r+1$ on. To somewhat simplify the notation we introduce:
\[
    \widecheck{C}(u_{p+r+1},\ldots,u_n)=C(\phi_1,\ldots, \phi_p,1,\ldots,1, u_{p+r+1},\ldots,u_n)
\]
First, apply transformation $u_{p+r+1} \mapsto 1- u_{p+r+1}$ on the last term $T_{K,K}$ above:
\begin{align*}
T_{K,K}^{ \{p+r+1\}}  :=&  \widecheck{C}(1, \phi_{p+r+2},\ldots,\phi_n)   \max \left\{ 0, \min \left\{ \phi^*_j-\phi^*_k,\phi^*_j- \phi^*_\ell(1-u_\ell),\phi^*_j-1 \right\} \right\}\\
&-  \widecheck{C}( \phi_{p+r+1}(1-u_{p+r+1}),\phi_{p+r+2},\ldots,\phi_n) \times \\
&  \max \left\{ 0, \min \left\{ \phi^*_j-\phi^*_k, \phi^*_j- \phi^*_\ell(1-u_\ell) , \phi^*_j-\phi^*_{p+r+1}(1-u_{p+r+1}) \right\}  \right\},
\end{align*}
where the two inside minima are to be taken over $j\in \{1, \ldots, p\}$, $k \in \{p+r+2,\ldots, n\}$, and $\ell \in \{p+1,\ldots,p+r\}$. Note that $\phi^*_j -1 <0$, so that only the second term of the expression above stays. By applying the transformation $u_{p+r+2} \mapsto 1- u_{p+r+2}$ to the expression under consideration, we get
\begin{align*}
T_{K,K}^{ \{ p+r+1,p+r+2\}} := & - \widecheck{C}(\phi_{p+r+1}(1-u_{p+r+1}),1,\phi_{p+r+3},\ldots, \phi_n) \times \\
 &   \max \left\{ 0, \min \left\{ \phi^*_j-\phi^*_k, \phi^*_j- \phi^*_\ell(1-u_\ell) , \phi^*_j-1 \right\}  \right\} \\
 & + \widecheck{C}(\phi_{p+r+1}(1-u_{p+r+1}),\phi_{p+r+2}(1-u_{p+r+2}),\phi_{p+r+3},\ldots, \phi_n) \times
\end{align*}
\[
\max \left\{ 0, \min \left\{ \phi^*_j-\phi^*_k, \phi^*_j- \phi^*_\ell(1-u_\ell) , \phi^*_j-\phi^*_{p+r+1}(1-u_{p+r+1}), \, \phi^*_j-\phi^*_{p+r+2}(1-u_{p+r+2}) \right\}  \right\},
\]
where the inside minima are taken over $j\in \{1, \ldots, p\},\ k \in \{p+r+3,\ldots, n\}$, and $\ell \in \{p+1,\ldots,p+r\}$. Similarly as above we see that only the second term stays. Again, we proceed inductively to get the final result for the transformed term:
$$
T_{K,K}^{K^c}:= (-1)^{n-(p+r)} C(\phi_1,\ldots,\phi_p,1,\ldots,1,\phi_{p+r+1}(1-u_{p+r+1}),\ldots, \phi_n(1-u_n))\times
$$
$$
\max \left\{0, \min_{\substack{j \in \{1,\ldots,p\} \\ k\in \{p+1,\ldots, n\}}} \left(\phi^*_j-\phi^*_k(1-u_k) \right) \right\}.
$$
The desired reflected maxmin copula $\dot{T}$ is then the sum of all the terms of the kind. We first observe that the second factor of this term (i.e.\ the one that does not correspond to $C$), is independent of the choice of $K$, so that we can take it out from the summation. The sum of the first factors, on the other hand, follows a certain well-known pattern called inclusion-exclusion principle. This factor of each term is of the form
\[
    C(\phi_1(u_1)),\ldots,\phi_p(u_p),\phi_{p+1}(1-u_{p+1})^{z_{p+1}}, \phi_n(\ldots,(1-u_n)^{z_n}),
\]
where $z$ is any function $z:\{p+1,\ldots,n\}\mapsto \{0,1\}$, and $(1-u_{p+1})^{z_{j}}$ means
\[
    (1-u_{p+1})^{z_{j}}=\left\{
                          \begin{array}{ll}
                            1-u_{p+1}, & \hbox{if $z_j=1$;} \\
                            1, & \hbox{if $z_j=0$.}
                          \end{array}
                        \right.
\]
The sign of this factor equals $-1$ if we have an odd number of inclusions (i.e.\ 1's in the row) and $+1$ otherwise, so it equals to $(-1)^{\Sigma z}$. This completes the proof.
\end{proof}

It remains to translate the formula for the reflected maxmin copula in terms of the generators introduced in \eqref{multi_inverse_generators} of Section \ref{sec:multi3} to get the general case of Formula \eqref{sqi_n3p1}. We follow exactly the same computations as in Section \ref{sec:multi3} to get the following theorem, where $\dot{C}$ denotes copula $C$ transformed with a flip in each of variables with indices $i=p+1,\ldots,n$.

\begin{theorem}\label{thm:multi}
The reflected maxmin $n$-copula $\dot{T}$ with $p$ maxima and $n-p$ minima, corresponding to generating functions $\mathbf{f}$ and reflected copula $\dot{C}$, is of the form
$$\dot{T_{\mathbf{f}}}(\dot{C})({\bf u})= \frac{\dot{C}(\widehat{f}_1(u_1),\cdots,(\widehat{f}_n(u_n)))} {\widehat{f}_1(u_1),\cdots,\widehat{f}_n(u_n)}\times$$
$$
\max \left\{0, \min_{\substack{j\in \{1,\ldots,p\} \\ k \in \{p+1,\ldots,n\}}} \left( \left(u_ju_k-f_j(u_j)f_k(u_k) \right)\prod_{\substack{i\in \{1,\ldots,n\} \\ i \neq j,k}} \widehat{f}_i(u_i)\right) \right\}.$$
\end{theorem}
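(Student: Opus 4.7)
The strategy is to transcribe the expression for $\dot{T_{\bf\Phi}}(C)(\mathbf u)$ obtained in the preceding theorem---phrased in the maxmin generators $\phi_i$ and their auxiliaries $\phi_i^*$---into the language of the reflected maxmin generators $f_i$ via the correspondence \eqref{multi_inverse_generators}. This mirrors the passage from \eqref{inverse_n3p1} to \eqref{sqi_n3p1} carried out in Section \ref{sec:multi3}, and it naturally splits into two independent simplifications.

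First, for the alternating sum $C'$: setting $w_i=\widehat f_i(u_i)$, one has $\widehat f_j(u_j)=\phi_j(u_j)$ for the max-indices $j\le p$ and $1-\widehat f_k(u_k)=\phi_k(1-u_k)$ for the min-indices $k>p$ (the latter being immediate from the definition $f_k(u_k)=1-u_k-\phi_k(1-u_k)$). Substituting these identities into the defining sum for $C'$ and identifying the index set $\{k>p:z_k=1\}$ with the subset of coordinates being flipped, the $2^{n-p}$ terms reorganize, sign for sign, into the standard inclusion--exclusion expansion of $\dot C(w_1,\dots,w_n)$ as the iterated single-variable flip applied to each of the last $n-p$ coordinates. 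Hence $C'(\mathbf u)=\dot C(\widehat f_1(u_1),\dots,\widehat f_n(u_n))$.

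Second, for the $\max\{0,\min(\cdots)\}$ factor: one has $\phi_j^*(u_j)=u_j/\widehat f_j(u_j)$ for $j\le p$ directly from $\phi^*=\mathrm{id}/\phi$, while a routine computation using \eqref{multi_inverse_generators} together with the definition of the auxiliary function for a class-$\mathcal F_2$ generator (cf.\ the discussion preceding \eqref{orig_stars}) yields $\phi_k^*(1-u_k)=f_k(u_k)/\widehat f_k(u_k)$ for $k>p$. Subtracting and expanding, the cross-terms $u_j f_k(u_k)$ cancel and the difference reduces to
\[
\phi_j^*(u_j)-\phi_k^*(1-u_k)=\frac{u_ju_k-f_j(u_j)f_k(u_k)}{\widehat f_j(u_j)\,\widehat f_k(u_k)}.
\]

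It then remains to clear denominators uniformly. I would multiply the quantity inside the minimum by the common positive factor $\prod_{i=1}^n\widehat f_i(u_i)$---positive in the interior by \textbf{(G1)}--\textbf{(G2)}, with boundary behavior handled by continuity---so that each ratio becomes $(u_ju_k-f_j(u_j)f_k(u_k))\prod_{i\ne j,k}\widehat f_i(u_i)$, and compensate by dividing the whole expression by the same product, which surfaces precisely as the denominator of the $\dot C/\prod\widehat f_i$ prefactor in the claimed formula. The only step that requires genuine attention is the sign bookkeeping in the inclusion--exclusion identification, i.e.\ verifying that the $2^{n-p}$ signs in $C'$ correspond exactly to those produced by iterating the flip operation $n-p$ times; all remaining manipulations are of the purely algebraic kind already done in the $n=3$ case.
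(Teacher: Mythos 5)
Your proposal is correct and follows essentially the same route as the paper: the authors likewise take the preceding theorem's $\phi$-form as the starting point and state only that one translates it via \eqref{multi_inverse_generators} exactly as in the $n=3$ computation of Section \ref{sec:multi3}. Your two simplifications --- the inclusion--exclusion identification $C'=\dot C(\widehat f_1(u_1),\dots,\widehat f_n(u_n))$ and the identity $\phi_j^*(u_j)-\phi_k^*(1-u_k)=\bigl(u_ju_k-f_j(u_j)f_k(u_k)\bigr)/\bigl(\widehat f_j(u_j)\widehat f_k(u_k)\bigr)$ followed by clearing denominators --- are precisely the computations the paper leaves implicit, carried out correctly.
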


\begin{center}
  \textbf{Appendix}
\end{center}

Let us give a rough explanation of how the three-dimensional scatterplot of a $3$-copula $C(x,y,z)$ is generated. It follows roughly the following process:
\begin{itemize}
	\item Determine all three marginal 2-copulas and their respective contourplots.
	\item Check for existence of singularities of marginal 2-copulas and respective masses. If marginals are without singularities, copula $C(x,y,z)$ doesn't have a 1-dimensional singularity.
	\item Split the sampling of 3-copula into regions where marginal $C(x,y,1)$ is continuously distributed and where it is singular.
	\item Calculate $\frac{\partial^2 C}{\partial x \partial y}$ and determine maximal subregions of $\mathbb{I}^3$ of continuity.
	\item The most difficult part of the process is the calculation of pseudo-inverse of $\frac{\partial^2 C}{\partial x \partial y}$ with respect to variable $z$. Beside inverting function rules, it is also necessary to transform the boundary conditions where these rules apply.
	\item At discontinuities of $\frac{\partial^2 C}{\partial x \partial y}$ singular components occur. The size of the gap implies the amount of mass gathered on respective singular component.
	\item When the calculation of pseudo-inverse of $\frac{\partial^2 C}{\partial x \partial y}$ is complete, generate a sample set of say 5000 pairs $(x_i,y_i)$ distributed with respect to marginal 2-copula $C(x,y,1)$ and an independent sample $z_i$ uniformly distributed on $\mathbb{I}$.
	\item Merge sets $\{(x_i,y_i)\}$ and $\{z_i\}$ into 3-tuples $(x_i,y_i,z_i)$.
	\item For every 3-tuple $(x_i,y_i,z_i)$ determine the region that $(x_i,y_i)$ belongs to and apply corresponding pseudo-inverse function rule to $z_i$.
	\item The process is done manually with lots of careful considerations which doesn't give rise to hope of automatization.
\end{itemize}


\begin{thebibliography}{00}



\bibitem{BeBoCiSaPlSa} L.\ B{\v{e}}hounek, U.\ Bodenhofer, P.\ Cintula, S.\ Saminger-Platz, P.\ Sarkoci, \textsl{Graded dominance and related graded properties of fuzzy connectives}, Fuzzy Sets and Systems, \textbf{262} (2015), 78--101.

\bibitem{ChDuMu} U.\ Cherubini, F.\ Durante, S. Mulinacci, (eds.), \textsl{Marshall--{O}lkin {D}istributions -- {A}dvances in {T}heory and {A}pplications}, Springer Proceedings in Mathematics \& Statistics, Springer International Publishing, 2015.

\bibitem{ChMu}U.\ Cherubini, S.\ Mulinacci, \emph{Systemic risk with exchangeable contagion: application to the European banking system}, ArXiv e-prints, 2015.

\bibitem{DuFeSaUbFl} F.~Durante, J.\ Fern\'andez S\'anchez, M.\ \'Ubeda Flores, \textsl{Bivariate copulas generated by perturbarion}, Fuzzy Sets and Systems, \textbf{228} (2013), 137--144.

\bibitem{DuGiMa1} F.~Durante, S.\ Girard, G.\ Mazo, \textsl{Copulas Based on Marshall-Olkin Machinery}, Chapter 2 in: U. Cherubini, F. Durante, S. Mulinacci (Eds.), Marshall-Olkin Distributions -- Advances in Theory and Practice, in: Springer Proceedings in Mathematics \& Statistics, Springer, 2015, pp. 15--31.

\bibitem{DuGiMa} F.~Durante, S.\ Girard, G.\ Mazo, \textsl{{M}arshall--{O}lkin type copulas generated by a global shock}, J.\ Comput.\ Appl.\ Math., \textbf{296} (2016), 638--648.

\bibitem{DuJa} F.\ Durante, P.\ Jaworski, \textsl{A new characterization of bivariate copulas}, Comm.\ in Stats.\ Theory and Methods, \textbf{39} (2010), 2901--2912.

\bibitem{DuKoMeSe} F.~Durante, A.~Kolesarov\`{a}, R.~Mesiar, C.~Sempi, \textsl{Semilinear copulas}, Fuzzy Sets and Systems, \textbf{159} (2008), 63--76.

\bibitem{DuMePaSe} F.~Durante, R.\ Mesiar, P.\ L.\ Papini, C.~Sempi, \textsl{2-Increasing binary aggregation operators}, Information Sciences, \textbf{177} (2007), 111--129.

\bibitem{DuOmOrRu} F.~Durante, M.\ Omladi\v{c}, L.\ Ora\v{z}em, N.\ Ru\v{z}i\'{c}, \textsl{Shock models with dependence and asymmetric linkages}, Fuzzy Sets and Systems, \textbf{323} (2017), 152--168.

\bibitem{DuSe} F.~Durante, C.~Sempi, Principles of Copula Theory, CRC/Chapman \& Hall, Boca Raton (2015).

\bibitem{FrNe} G.\ A.\ Fredricks, R.\ B.\ Nelsen, \textsl{On the relationship between Spearman’s rho and Kendall’s tau for pairs of continuous random variables}, Journal of Statistical Planning and Inference \textbf{137} (2007), 2143--2150

\bibitem{FrGr} B.\ E.\ Fristedt, L.\ F.\ Gray, \textsl{AModern Approach to Probability Theory}, Probability and Its Applications, Birkh{\"a}user, Boston, 2013.

\bibitem{GeNe} C.\ Genest, J.\ Ne\v{s}lehov\'a, \textsl{Assessing and Modeling Asymmetry in Bivariate Continuous Data}, In: P.\ Jaworski, F.\ Durante, W.K.\ H\"ardle, (eds.), {C}opulae in {M}athematical and {Q}uantitative {F}inance, Lecture Notes in Statistics, Springer Berlin Heidelberg, (2013), 152--16891--114.

\bibitem{Hu} T.\ E.\ Huillet, \textsl{Stochastic species abundance models involving special copulas}, Phisica A, \textbf{490} (2018) 77--91



\bibitem{JwBaDeMe14}  T.\ Jwaid, B.\ De Baets, H. De Meyer, \textsl{Ortholinear and paralinear semi-copulas}, Fuzzy Sets and Systems, \textbf{252} (2014), 76--98.

\bibitem{JwBaDeMe15}  T.\ Jwaid, B.\ De Baets, H. De Meyer, \textsl{Semiquadratic copulas based on horizontal and vertical interpolation}, Fuzzy Sets and Systems, \textbf{264} (2015), 3--21.

\bibitem{KlLiMePa}  E.\ P.\ Klement, J. Li, R. Mesiar, E. Pap, \textsl{Integrals based on monotone set functions}, Fuzzy Sets and Systems, \textbf{281} (2015), 3--21.

\bibitem{KlMeSpSt} E.\ P.\ Klement, R.\ Mesiar, F.\ Spizzichino, A.\ Stup{\v{n}}anov{\'a}, \textsl{Universal integrals based on copulas}, Fuzzy Optim.\ Decis.\ Mak., \textbf{13}, No.\ 3, (2014), 273--286.


\bibitem{KoBuKoMoOm} D.\ Kokol Bukov\v{s}ek, T.\ Ko\v{s}ir, B.\ Moj\v{s}kerc, and M.\ Omladi\v{c}, \textsl{Non-exchangeability of copulas arising from shock models}, J. Comput. Appl. Math. \textbf{ 358} (2019), 61-83.

\bibitem{KoOm} T.\ Ko\v{s}ir, M.\ Omladi\v{c}, \textsl{Reflected maxmin copulas and modelling quadrant subindependence}, Fuzzy Sets and Systems, available online, in Press.

\bibitem{LiMcNe} F.\ Lindskog, A.\ J.\ McNeil, \textsl{Common Poisson Shock Models: Applications to insurance and credit risk modelling}, ASTIN Bulletin, \textbf{33}, No. 2, (2003), 209--238.

\bibitem{Mars} A.~W.~Marshall, \textsl{Copulas, marginals, and joint distributions}, in: L.~R\"{u}schendorf, B.~Schweitzer, M.~D.~Taylor  (eds.), Distributions with Fixed Marginals and Related Topics in LMS, Lecture Notes -- Monograph Series, vol.~ \textbf{28}, 1996, 213--222.

\bibitem{MaOl} A.~W.~Marshall, I.~Olkin, \textsl{A multivariate exzponential distributions}, J.~Amer.~Stat.~Assoc., \textbf{62}, (1967), 30--44.


\bibitem{MeKoKo}  R.\ Mesiar, M.\ Komorn\'{i}kov\'{a}, J.\ Komorn\'{i}k, \textsl{Perturbation of bivariate copulas}, Fuzzy Sets and Systems, \textbf{268} (2015), 127--140.

\bibitem{Mu} S.\ Mulinacci. \emph{Archimedean-based Marshall-Olkin distributions and related dependence structures}, Methodol.\ Comput.\ Appl.\ Probab., {\bf 20} (2018), 205--236.



\bibitem{Nels} R.\ B.\ Nelsen, An introduction to copulas, 2nd edition, Springer-Verlag, New York (2006).


\bibitem{OmRu}  M.\ Omladi\v{c}, N.\ Ru\v{z}i\'{c}, \textsl{Shock models with recovery option via the maxmin copulas}, Fuzzy Sets and Systems, \textbf{284} (2016), 113--128.

\bibitem{RoLaUbFl}  J.\ A.\ Rodr\'iguez-Lallena, M.\ \'Ubeda-Flores, \textsl{A new class of bivariate copulas}, Stat.\ \& Probab.\  Lett., \textbf{66} (2004), 315--325.


\bibitem{Skla} A.\ Sklar, Fonctions de r\'{e}partition \`{a} $n$ dimensions et leurs marges, Publ.\ Inst.\ Stat.\ Univ.\ Paris \textbf{8} (1959) 229--231.

\bibitem{math} Wolfram Research, Inc. Mathematica, Version {\bf 11}, Champaign, IL, 2017.

\end{thebibliography}
\end{document}